\documentclass[10pt]{amsart}

\usepackage{pifont}
\usepackage[normalem]{ulem}

\usepackage[parfill]{parskip}

\usepackage{setspace}
\usepackage{amsmath, amsfonts, amsthm, amstext, amssymb, xspace, float, mathdots, stmaryrd}
\usepackage{eucal}
\usepackage{array}
\usepackage{amssymb,latexsym, mathtools}
\usepackage{xcolor}
\usepackage[shortlabels]{enumitem}
\usepackage{microtype}
\usepackage{longtable,booktabs}
\usepackage{tikz-cd}
\usetikzlibrary{quotes}
\usepackage{graphicx}
\usepackage{hyperref}
\usepackage{thmtools}
\usepackage[capitalise, nameinlink]{cleveref}
\hypersetup{
    colorlinks,
    linkcolor={purple!70!black},
    citecolor={blue!50!black},
    urlcolor={blue!80!black}
}
\usepackage{semtrans}

\theoremstyle{definition}
\numberwithin{equation}{section}
\newtheorem{thm}[equation]{Theorem}
\newtheorem{lemma}[equation]{Lemma}
\newtheorem{corollary}[equation]{Corollary}
\newtheorem{proposition}[equation]{Proposition}

\newtheorem{remark}[equation]{Remark}
\newtheorem{definition}[equation]{Definition}

\newtheorem{question}{Question}

\newtheorem{mainTheorem}{Main Theorem}

\usepackage{todonotes}

\def\c{\mathbb{C}}

\def\f{\mathbb{F}}

\def\p{\mathbb{P}}

\def\r{\mathbb{R}}
\def\z{\mathbb{Z}}

\def\ff{\mathbb{F}}
\def\zz{\mathbb{Z}}

\newcommand{\ull}[1]{\underline{#1}}

\def\colim{\operatorname{colim}}
\def\ker{\operatorname{ker}}
\def\coker{\operatorname{coker}}

\def\Th{\operatorname{Th}}
\def\RP{\mathbb{RP}}

\def\upperalphabet{\\A\\B\\C\\D\\E\\F\\G\\H\\I\\J\\K\\L\\M\\N\\O\\P\\Q\\R\\S\\T\\U\\V\\W\\X\\Y\\Z}
\def\loweralphabet{\\a\\b\\c\\d\\e\\f\\g\\h\\i\\j\\k\\l\\m\\n\\o\\p\\q\\r\\s\\t\\u\\v\\w\\x\\y\\z}
\def\lowergreekalphabet{\\\alpha\\\beta\\\gamma\\\delta\\\epsilon\\\zeta\\\eta\\\theta\\\kappa\\\lambda\\\mu\\\nu
    \\\xi\\\pi\\\rho\\\sigma\\\tau\\\upsilon\\\psi\\\chi\\\phi\\\omega}

\def\makecommands#1#2#3{
    \bgroup
    \def\tempcmdname##1{#1}
    \def\tempcmdbody##1{#2}
    \def\\##1{\expandafter\xdef\csname\tempcmdname{##1}\endcsname{\unexpanded\expandafter{\tempcmdbody{##1}}}}
    #3
    \egroup
}

\makecommands{#1mr}{\mathrm{#1}}{\upperalphabet}  
\makecommands{#1cal}{\mathcal{#1}}{\upperalphabet}  
\makecommands{#1#1}{\mathbb{#1}}{\upperalphabet} 
\makecommands{#1bar}{\overline{#1}}{\upperalphabet}
\makecommands{#1scr}{\mathscr{#1}}{\upperalphabet}
\makecommands{#1twee}{\widetilde{#1}}{\upperalphabet\loweralphabet} 
\makecommands{sf#1}{{\sf #1}}{\upperalphabet\loweralphabet}
\makecommands{#1frak}{\mathfrak{#1}}{\upperalphabet}  
\makeatletter
\makecommands{\expandafter\@gobble\string#1bar}{\overline{#1}}{\lowergreekalphabet}
\makecommands{\expandafter\@gobble\string#1twee}{\widetilde{#1}}{\lowergreekalphabet}
\makeatother

\DeclareMathOperator{\Map}{Map}

\DeclareMathOperator{\Top}{\mathcal{T}op}

\DeclareMathOperator{\res}{{\sf res}}

\author[M. Guo]{Manyi Guo}\address{University of Washington}\email{manyiguo@uw.edu}
\author[J. Morris]{Jackson Morris}\address{University of Washington}\email{jackmann@uw.edu}
\author[A. Waugh]{Alex Waugh}\address{University of Washington}\email{ajw48@uw.edu}
\author[A. J. Yang]{Albert Jinghui Yang}\address{University of Pennsylvania}\email{yangjh@sas.upenn.edu}

\title[Immersions of $\Cmr_2$-projective spaces via $\text{K}\r$-theory]{Immersions of $\Cmr_2$-projective spaces via $\text{K}\r$-theory}

\begin{document}

\begin{abstract}
    We compute the Atiyah Real K-theory of $\Cmr_2$-equivariant projective spaces and construct immersions of such spaces into multiples of the regular representation. These computations are made tractable by the recent geometric filtration of equivariant projective spaces due to Bhattacharya--Waugh--Zeng--Zou, together with a variant of the localized slice spectral sequence introduced by Meier--Shi--Zeng. As an immediate corollary of these computations, we obtain an equivariant analogue of James periodicity.
\end{abstract}

\maketitle

\tableofcontents

\section{Introduction}

The study of immersions of smooth manifolds into Euclidean spaces was a primary catalyst for the development of algebraic topology in the mid-twentieth century. What began as a question in differential geometry--determining the smallest dimension $n+k$ such that a given $n$-manifold immerses in $\r^{n+k}$--quickly evolved into a central problem in stable homotopy theory \cite{Whitney}. This transition stems from the observation that the existence of an immersion is equivalent to the existence of a corresponding bundle monomorphism \cite{HirschImmersions}, thereby translating the problem effectively into the language of characteristic classes and K-theory \cite{AtiyahImmersions}. 

Real projective spaces $\r\p^n$ have served as a litmus test for this problem. Since the stable normal bundle of an immersion $\r\p^n\looparrowright \r^{n+k}$ is stably equivalent to a multiple of the tautological line bundle, the study of immersions in this case is intimately tied to the stable behavior of the tautological line bundle. Atiyah showed that one can obtain an upper bound on $k$ by computing the order of the tautological line bundle over $\r\p^n$ in $\widetilde{\text{KO}}^0(\r\p^n)$. Building on work of James \cite{JamesJoin}, Fujii \cite{FujiiKO} identified this group as a cyclic group of order $2^{\phi(n)}$, where $\phi(n)$ is the {\bf James function} (see \Cref{thm:KO(RP^n)}). This provided the first significant refinement of Whitney's general immersion results and played a central role in Adams’ solution of the vector fields on spheres problem \cite{AdamsVFS}.

By reformulating immersion problems in terms of real K-theory,one is naturally led to consider the classifying space $\r\p^\infty\simeq \Bmr\Sigma_2$, obtained as the colimit of the tower
\[
    \begin{tikzcd}
        \r\p^0 \ar[r, hook, ""] & \r\p^1 \ar[r, hook, ""] & \r\p^2 \ar[r, hook, ""] & \cdots
    \end{tikzcd}
\]
This perspective is computationally powerful, as the K-theory of $\r\p^n$ may be viewed as a truncation of the K-theory of $\Bmr\Sigma_2$ for each $n$. 

In this paper, we study the analogous benchmark immersion problem for projectivizations of finite orthogonal $\Cmr_2$-representations. Since every such representation embeds into a multiple of the regular representation $\rho = 1 + \sigma$, we restrict attention to immersions of
\begin{equation} \label{defn: p(n rho)}
    \p(n\rho) \coloneqq S(n\rho)/\text{antipodes}
\end{equation}
into multiples of the $\Cmr_2$-regular representation. Our main immersion result is an equivariant refinement of Atiyah’s classical argument, using equivariant analogues of K-theory.

\begin{mainTheorem}[\Cref{thm:p(nrho) immersion existence}] \label{main:immersions}
    For each $n\geq 1$, there is a $\Cmr_2$-equivariant immersion 
    \[\p(n\rho)\looparrowright 2^{\phi(2n-1)}\rho,\] 
    where $\phi(n)$ is the number of integers $s$ such that $0 < s\leq n$ and $s\equiv 0, 1,2, 4\mod 8$. This $\phi(n)$ is the classical James function. 
\end{mainTheorem}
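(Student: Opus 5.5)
Following Atiyah's classical argument, I will use the equivariant Hirsch--Smale theory together with a $\mathrm{KR}$-theoretic order computation. By the equivariant Hirsch--Smale theorem (Bierstone, Wasserman), a $\Cmr_2$-manifold $M$ of dimension less than that of a representation $V$ immerses equivariantly in $V$ precisely when there is a $\Cmr_2$-bundle $\nu$ over $M$ with $TM\oplus\nu\cong M\times V$. I expect this to need a codimension or gap hypothesis on the fixed-point strata; that is harmless here because $2^{\phi(2n-1)}\rho$ is large relative to $n\rho$. So I would reduce the problem to constructing a real $\Cmr_2$-bundle $\nu$ over $\p(n\rho)$ of dimension $2^{\phi(2n-1)+1}-(2n-1)$ with $T\p(n\rho)\oplus\nu$ trivial and isomorphic to $2^{\phi(2n-1)}\rho$.

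Next, I will identify the tangent bundle. Let $\gamma$ be the tautological equivariant line bundle. The classical Euler-sequence argument, done equivariantly, gives
\[
T\p(n\rho)\oplus \underline{\RR}\;\cong\; \gamma\otimes n\rho \;=\; n\gamma\oplus n(\gamma\otimes\sigma).
\]
The candidate normal bundle is $\nu$ with $\nu\oplus n\gamma\oplus n(\gamma\otimes\sigma)\cong N\rho\oplus\underline{\RR}$. Since $\gamma\otimes\rho=\gamma\oplus\gamma\sigma$, it suffices to show that $\gamma\otimes\rho-\rho$ has finite order $2^{m}$ in the reduced equivariant real $K$-group. Here I mean the $\mathrm{KO}_{\Cmr_2}$, or equivalently the Atiyah Real-type group the paper computes, of $\p(n\rho)$, and I need $2^m\le 2^{\phi(2n-1)}$. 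Writing $2^{m}=2n'+\ldots$, we then get $2^{\phi}(\gamma\rho-\rho)=0$ stably. Hence $2^{\phi}\gamma\rho$ is stably trivial, and
\[
\nu \;:=\; (2^{\phi}-n)\gamma\rho
\]
satisfies $T\oplus\underline{\RR}\oplus\nu\simeq_{s} 2^{\phi}\rho$ stably. To destabilize, I would use an equivariant stable range: a stable isomorphism between equivariant bundles of dimension exceeding the dimensions of the fixed strata becomes an honest isomorphism after the one trivial summand is cancelled. This uses equivariant obstruction theory on a $\Cmr_2$-CW structure of dimension $2n-1$.

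The main computation, and the obstacle, is bounding the order of the reduced class $x=[\gamma\rho]-[\rho]$. I would compute the $\mathrm{KR}$- or $\mathrm{KO}_{\Cmr_2}$-theory of $\p(n\rho)$ using the Bhattacharya--Waugh--Zeng--Zou filtration. Its subquotients are representation spheres, so the successive cofibers carry cells of the form $S^{k\rho}$ and $S^{k\rho-1}$. I would feed these into the localized slice spectral sequence to show that the reduced group is cyclic, generated by $x$, of order $2^{\phi(2n-1)}$.

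Comparing with underlying spaces gives a check. The underlying space is $\RP^{2n-1}$, and forgetting to it recovers Fujii's order $2^{\phi(2n-1)}$, which gives the lower bound. The upper bound should come from counting, cell by cell, the $\rho$-cells that contribute a $\zz/2$ or $\zz$. This follows the pattern $0,1,2,4 \bmod 8$ from the $\mathrm{KR}$-coefficients $\pi_{\star}\mathrm{KR}$ along $\rho$-graded degrees, which are $8$-periodic in the relevant grading after the real Bott class is inverted. The risk is extension problems in the spectral sequence. I would resolve them by using the restriction map to underlying $\mathrm{KO}(\RP^{2n-1})$ together with multiplicativity by $x$, showing $x$ generates, as in Fujii's argument.
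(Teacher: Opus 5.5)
Your reduction steps match the paper: the equivariant Euler sequence $T\p(n\rho)\oplus\underline{\r}\cong n(\gamma\otimes\rho)$, the choice $\nu=(2^{\phi}-n)\gamma\otimes\rho$, equivariant cancellation, and Bierstone's theorem. The gap is the decisive step: showing that $x=[\gamma\otimes\rho]-[\rho]$ is $2^{\phi(2n-1)}$-torsion in $\widetilde{\mathrm{KO}}^0_{\Cmr_2}(\p(n\rho))$. You do not supply this, and the route you sketch cannot supply it.

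$\mathrm{K}\r$ and $\mathrm{KO}_{\Cmr_2}$ are not equivalent; there is only the realification map $\mathrm{K}\r^0\to\mathrm{KO}^0_{\Cmr_2}$. Since $\overline{v_1}\in\pi_\rho\mathrm{K}\r$ is a unit, $\mathrm{K}\r$ is $\rho$-periodic. Its values on the successive filtration quotients are therefore uniform, and the answer is $\mathrm{KU}$-like: no $0,1,2,4 \bmod 8$ pattern can appear. Indeed the paper's $\mathrm{K}\r$ computation gives order $2^n$, which yields only the weaker immersion into $2^n\rho$. Moreover $2^n>2^{\phi(2n-1)}$ whenever $4\mid n$ (e.g.\ $\phi(7)=3$).

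For $\mathrm{KO}_{\Cmr_2}$ itself, the structure you predict is false. The reduced group is not finite cyclic on $x$: the class $[\gamma]-1$, reduced at a point of the fixed component $\p(n)$, restricts at a point of the other fixed component $\p(n\sigma)$ to $\sigma-1\in\mathrm{RO}(\Cmr_2)$, so it has infinite order. Nor does the $\mathrm{k}\r$ slice machinery, whose slices are $\Sigma^{m\rho}\Hmr\ull{\zz}$, give a computation of $\mathrm{KO}_{\Cmr_2}$.

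Your lower-bound check is also off by a factor of $2$. The class $x$ restricts to $2[\gamma_1-1]$ in $\widetilde{\mathrm{KO}}^0(\RP^{2n-1})$, which has order $2^{\phi(2n-1)-1}$. In any case, only an upper bound on the order of $x$ matters for building the immersion.

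The missing idea is that the upper bound needs no equivariant computation at all. The bundle $\gamma\otimes\rho=\xi^{(n)}_\rho$ is the pullback of the coinduced bundle $\Map(\Cmr_2,\gamma_1)$ along the unit map $\eta\colon\p(n\rho)\to\Map(\Cmr_2,\RP^{2n-1})$, $L\mapsto(L,gL)$. Coinduction is additive and sends $\epsilon_1$ to $\epsilon_\rho$. Hence Fujii's stable isomorphism $2^{\phi(2n-1)}\gamma_1\oplus\epsilon_k\cong\epsilon_{2^{\phi(2n-1)}+k}$ over $\RP^{2n-1}$ coinduces and pulls back to
\[
2^{\phi(2n-1)}\xi^{(n)}_\rho\oplus\epsilon_{k\rho}\cong\epsilon_{(2^{\phi(2n-1)}+k)\rho}.
\]
This is exactly the torsion statement you need, and your cancellation and Bierstone steps then finish the proof as in the paper.
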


The equivariant analogue of Hirsch's work, relating the immersion problem to the existence of certain bundle morphisms, was established by Bierstone \cite{Bierstone}. The main results leading to \Cref{main:immersions} are equivariant analogues of work of Atiyah and Fujii, and may be summarized by the following objectives, where ${\rm KO}_{\Cmr_2}$ is the spectrum representing $\Cmr_2$-equivariant real K-theory \cite[Chapter XIV]{Alaska}:
\begin{enumerate}
    \item[1]
    \label{objective 1}Determine the order of the $\rho$-dimensional tautological bundle over $\p(n\rho)$, viewed as a class in $\text{KO}_{\Cmr_2}(\p(n\rho))$;
    \item[2]
    \label{objective 2}Show how to use the stable information obtained from $\text{KO}_{\Cmr_2}$-theory to deduce unstable information about vector bundles. 
\end{enumerate}

Computationally, we approach \eqref{objective 1} by first noticing that, as in the classical case, there is a directed system
\begin{equation}\label{filtration:p(n rho)}
    \begin{tikzcd}
        \p(\rho) \ar[r, hook, ""] & \p(2\rho) \ar[r, hook, ""] & \p(3\rho) \ar[r, hook, ""] & \cdots
    \end{tikzcd}
\end{equation}
whose colimit $\p(\infty\rho)$ may be identified with the classifying space $\Bmr_{\Cmr_2}\Sigma_2$ for $\Cmr_2$-equivariant principle $\Sigma_2$-bundles. Consequently, we primarily focus on this equivariant classifying space, restricting to $\p(n\rho)$ when necessary.

Then, instead of computing the $\Cmr_2$-equivariant K-theory of $\Bmr_{\Cmr_2}\Sigma_2$ entirely, we study its Atiyah Real K-theory \cite{Atiyah66}, represented by ${\rm K}\r$, and utilize the ``realification map":
\begin{equation}
\label{intro:c2realification}
    \Kmr\r^0(X)\to {\rm KO}^0_{\Cmr_2}(X)
\end{equation}
induced by the forgetful functor sending an Atiyah Real vector bundle to its underlying $\Cmr_2$-equivariant real vector bundle (see \Cref{sec:immersions}).

Objective \eqref{objective 2} concerns the passage from stable information in K-theory to unstable information about vector bundles. Classically, this follows from a cancellation theorem (see \cref{thm:ClassicalCancellation}), provided that the ranks of the vector bundles under consideration are sufficiently large. We establish an analogous cancellation theorem for $\Cmr_2$-equivariant vector bundles in \Cref{thm:C2Cancellation}.

Combining the computational input from the first objective with Bierstone's theorem \Cref{thm:BierstoneImmersion}, we obtain immersions into $2^n\rho$ (see \Cref{cor:P(nrho)Immersions}). We then sharpen this result by observing that the bundle under consideration over $\p(n\rho)$ is coinduced from the classical tautological line bundle, as in \eqref{eqn:pullbackconiducedBundle}, thereby obtaining the immersions stated in \Cref{main:immersions}. In particular, this shows that $\text{K}\r$ is as effective as $\text{KU}$ in detecting immersions of their respective real projective spaces. 

As a consequence of this refinement, we obtain a certain equivariant analogue of James periodicity for equivariant stunted projective spaces $\p_{k\rho}^{(n+k\rho)}$ (see \Cref{defn:stuntedProj}).

\begin{mainTheorem}[\Cref{thm:C2JamesPeriodicity}]
    For each $n>1$ and $k\in \z$, there is a stable equivalence
    \[
        \Sigma^{2^{\phi(2n-1)}\rho}\, \p_{k\rho}^{(k+n)\rho} \simeq \p_{(k+2^{\phi(2n-1)})\rho}^{(k + 2^{\phi(2n-1)} + n)\rho}, 
    \]
    where $\phi(n)$ is defined in \cref{thm:KO(RP^n)}.
\end{mainTheorem}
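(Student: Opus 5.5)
The plan follows the classical proof of James periodicity via Thom spaces. First identify the equivariant stunted projective space with a Thom space. For $m\geq 0$, the quotient $\p((k+n)\rho)/\p(k\rho)$ is the Thom space of $k\gamma$ over $\p(n\rho)$, where $\gamma$ is the $\rho$-dimensional tautological bundle (more precisely $k\gamma\otimes$ the tautological line, as in the classical picture). For general $k\in\z$, I would take this as the definition of $\p_{k\rho}^{(k+n)\rho}$ via \Cref{defn:stuntedProj}, namely as the Thom spectrum $\p(n\rho)^{k\gamma}$ of the virtual bundle $k\gamma$. With this description,
\[
\p_{(k+N)\rho}^{(k+N+n)\rho}\simeq \p(n\rho)^{(k+N)\gamma},\qquad \Sigma^{N\rho}\p_{k\rho}^{(k+n)\rho}\simeq \p(n\rho)^{k\gamma\oplus N\rho},
\]
where $N=2^{\phi(2n-1)}$. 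It therefore suffices to show that $N\gamma$ and the trivial bundle $N\rho$ become equivalent after adding a trivial bundle, i.e.\ that they agree in $\widetilde{\mathrm{KO}}_{\Cmr_2}$ in a form that yields a stable equivalence of Thom spectra.

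The key input is the computation underlying \Cref{main:immersions}. There the class $[\gamma]-\rho$ (or its Real lift in $\Kmr\r^0(\p(n\rho))$) was shown to have order dividing $2^{\phi(2n-1)}$. I would use the coinduction description \eqref{eqn:pullbackconiducedBundle}: $\gamma$ is coinduced from the classical tautological line bundle over $\r\p^{2n-1}$, so $N([\gamma]-\rho)=0$ follows from Fujii's result \Cref{thm:KO(RP^n)} that $\widetilde{\mathrm{KO}}(\r\p^{2n-1})$ is cyclic of order $2^{\phi(2n-1)}$, transported through coinduction and the realification map \eqref{intro:c2realification}. This gives an equality $N\gamma = N\rho$ in $\mathrm{KO}^0_{\Cmr_2}(\p(n\rho))$.

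Next, pass from a K-theory equality to an honest stable isomorphism of equivariant vector bundles. Since $\p(n\rho)$ is a finite $\Cmr_2$-CW complex, equality in $\mathrm{KO}^0_{\Cmr_2}$ means $N\gamma\oplus V\cong N\rho\oplus V$ for some bundle $V$, and $V$ can be taken trivial of the form $m\rho$ by embedding into a complement. Alternatively, \Cref{thm:C2Cancellation} gives an isomorphism directly once the ranks on both fixed-point strata are large. Adding $k\gamma$ (as a virtual bundle) then gives $\p(n\rho)^{(k+N)\gamma\oplus m\rho}\simeq \p(n\rho)^{k\gamma\oplus N\rho\oplus m\rho}$, and desuspending by $m\rho$ yields the stated equivalence.

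The main obstacle is the middle step: making sure that the order statement is about the correct class in equivariant $\mathrm{KO}$, not just in $\Kmr\r$, and that the bound is $2^{\phi(2n-1)}$ rather than the cruder $2^n$. The realification map only sends the Real relation to a $\mathrm{KO}_{\Cmr_2}$ relation if $\gamma$ is the realification of the Real tautological bundle. If that fails, I would instead argue directly with the coinduced bundle, using that coinduction is additive on bundles and preserves trivial ones, $\mathrm{CoInd}(\underline{\r})=\rho$.
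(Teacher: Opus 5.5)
Your proposal is correct and is essentially the paper's argument. Like the paper, you take $\p_{k\rho}^{(k+n)\rho}=\Th(k\xi_\rho^{(n)})$, obtain $2^{\phi(2n-1)}\bigl([\xi_\rho^{(n)}]-[\rho]\bigr)=0$ in $\mathrm{KO}^0_{\Cmr_2}(\p(n\rho))$ from the coinduction description \eqref{eqn:pullbackconiducedBundle} together with Fujii's theorem (this is \Cref{lem:torsionInKOC2}), and conclude that the Thom spectra agree after suspending by $2^{\phi(2n-1)}\rho$. The only differences are cosmetic: the paper phrases the last step through the quotient $\mathrm{J}_{\Cmr_2}$ (equivalence of sphere bundles), whereas you use the stronger stable isomorphism $N\xi_\rho^{(n)}\oplus\epsilon_{m\rho}\cong\epsilon_{(N+m)\rho}$ directly; and, as you suspected, realification from $\Kmr\r$ is not needed for the torsion step (on its own it would only give the weaker bound $2^n$).
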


\subsection{Notation} \

For the rest of the paper, we use the following conventions.
\begin{itemize}
    \item $\Hmr \ull{\z}$ denote the $\Cmr_2$-equivariant Eilenberg-MacLane spectrum of the constant Mackey functor $\ull{\z}$.
    \item $\text{K}\r, \text{k}\r$ denote the Atiyah Real K-theory spectrum and its connective cover.
    \item $\text{KO}_{\Cmr_2}$ and $\text{KO}$ denote $\Cmr_2$-equivariant and classical real K-theory spectra. 
    \item $\text{KU}, \text{ku}$ denote the complex K-theory and its connective cover.
    \item $\p(n \rho)$ is projectivization of $n \rho$ for the $\Cmr_2$-regular representation $\rho$, as defined in \eqref{defn: p(n rho)}.
    \item We use $\Bmr_{\Cmr_2}\Sigma_2$ and $\p(\infty\rho)$ interchangeably. 
    \item We write $\star$ an arbitrary element of $\mathrm{RO}(\Cmr_2)$-grading and $*$ for an arbitrary integer.
\end{itemize}

\subsection{Outline of computations} \

The $\text{ku}$-homology of $\r\p^\infty$ can be computed via an Atiyah--Hirzebruch spectral sequence arising from the Whitehead filtration on $\text{ku}$. The associated graded of this filtration on $\text{ku}$ takes the form
\[
\text{gr}_n(\text{ku}) = \left\{\begin{array}{rl}
    \Sigma^{2m}\Hmr\z, & n = 2m \geq 0;\\
    0, & \text{else.}
\end{array} \right.
\]
Smashing with $\text{B}\Sigma_{2+}$ yields an Atiyah--Hirzebruch style spectral sequence with $\Emr_1$-page given by
\begin{equation}\label{eqn:kuSliceSS}
    \Emr_1 = \bigoplus_{m \geq0}\Sigma^{2m}\Hmr\z_*(\text{B}\Sigma_{2+}) \implies \text{ku}_*(\text{B}\Sigma_{2+}).
\end{equation}
This spectral sequence collapses on the $\Emr_1$-page for degree reason.

The equivariant analogue of the Whitehead filtration is the slice filtration. The associated graded of this filtration for $\text{k}\r$ takes the form (see \cite{Dugger})
\begin{equation}\label{eqn:kRSlice}
\text{P}^n_n(\text{k}\r) = \left\{\begin{array}{rl}
    \Sigma^{2m\rho}\Hmr\ull{\z}, & n = 2m \geq 0;\\
    0, & \text{else.}
\end{array} \right.
\end{equation}

By smashing with $\Bmr_{\Cmr_2}\Sigma_{2+}$, we obtain an equivariant Atiyah-–Hirzebruch–style spectral sequence analogous to \eqref{eqn:kuSliceSS}. Inspired by \cite{MSZ23}, we call this the {\bf augmented slice spectral sequence}:
\[\textbf{ASliceSS}_{\text{k}\r}(\Bmr_{\Cmr_2}\Sigma_2)\implies \text{k}\r_\star(\Bmr_{\Cmr_2}\Sigma_{2+}).\]
This spectral sequence is ``augmented" in the sense that it comes equipped with a preferred map to the slice spectral sequence for $\text{k}\r$, induced by the natural map $\Bmr_{\Cmr_2}\Sigma_{2+} \to S^0$, which collapses $\Bmr_{\Cmr_2}\Sigma_2$ to the non-basepoint.


Explicitly, $\textbf{ASliceSS}_{\text{k}\mathbb{R}}(\Bmr_{\Cmr_2}\Sigma_2)$ takes the form
\begin{equation}\label{eqn:kRSS}
    \Emr_1 = \bigoplus_{m \geq 0}\Sigma^{m\rho}\Hmr\underline{\z}_\star(\Bmr_{\Cmr_2}\Sigma_{2+}) \implies \text{k}\r_\star(\Bmr_{\Cmr_2}\Sigma_{2+}).
\end{equation}
We compute the $\Emr_1$-page of this spectral sequence by considering the spectral sequence obtained by smashing \eqref{filtration:p(n rho)} with $\Hmr\ull{\zz}$. Such a spectral sequence was recently studied in work of the third author with Bhattacharya--Zeng--Zou \cite{BWZZ25}, where they identify the associated graded explicitly. We then utilize this description to prove the following.

\begin{mainTheorem} [\cref{thm:HZHomologyBC2Sigma2}]\label{main:HZ}
    As $\Hmr\ull{\zz}_\star$-modules,
    \[
        \Hmr\ull{\zz}_\star(\Bmr_{\Cmr_2}\Sigma_{2+})\cong \Hmr\ull{\zz}_\star\langle \sfb_{0}\rangle \oplus \Cmr\langle\sfb_{m\rho+\sigma}\rangle_{m\geq 0} \oplus \Kmr\langle \sfb_{m\rho}\rangle_{m\geq 1}
    \]
    where $\Cmr$ and $\Kmr$ are the cokernel and kernel of the map $\Hmr\ull{\zz}_\star \xrightarrow{\cdot 2} \Hmr\ull{\zz}_\star$. The product structure is described completely in \Cref{thm:HZHomologyProductStructure}.
\end{mainTheorem}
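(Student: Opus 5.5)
Using the directed system \eqref{filtration:p(n rho)}, I would filter $\Bmr_{\Cmr_2}\Sigma_{2+}$ by the $\p(n\rho)_+$ and compute $\Hmr\ull{\zz}_\star$ with the resulting spectral sequence. The input is the cofiber $\p((n+1)\rho)/\p(n\rho)$. By the cell structure of \cite{BWZZ25}, attaching one copy of $\rho$ adds a $\sigma$-cell and then a $\rho$-cell. Concretely, $\p((n+1)\rho)/\p(n\rho)$ is the Thom space of $n\rho\otimes\gamma$ over $\p(\rho)$. Since $\p(\rho)$ is a $\Cmr_2$-space with a fixed point, this quotient is built as a cofiber $S^{n\rho+\sigma}\to \text{(stuff)}\to S^{(n+1)\rho}$. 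After smashing with $\Hmr\ull{\zz}$, I expect the associated graded to become
\[\Sigma^{n\rho}\Hmr\ull{\zz}\wedge \p(\rho)_+\text{-cofiber}\simeq \Sigma^{n\rho+\sigma}\Hmr\ull{\zz}\vee\Sigma^{(n+1)\rho}\Hmr\ull{\zz},\]
which is the identification of the associated graded cited from \cite{BWZZ25}. The $\Emr_1$-page is therefore free on classes $e_{m\rho+\sigma}$ and $e_{(m+1)\rho}$ for $m\ge0$, together with $e_0$.

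The $d_1$ differentials (the attaching maps) come in two kinds. The map $e_{m\rho+\sigma}\to e_{m\rho}$ is zero: on underlying spaces this is the even-to-odd degree map of $\r\p^\infty$, and on fixed points it is trivial. The map $e_{(m+1)\rho}\to e_{m\rho+\sigma}$ is multiplication by $2$: underlying, it is the degree-$2$ attaching map of $\r\p^{2m+2}$. The equivariant degree is pinned down by restriction and geometric fixed points. Since $\p(n\rho)^{\Cmr_2}=\r\p^{n-1}\sqcup\r\p^{n-1}$, I would compare the $\Phi^{\Cmr_2}$ contributions to confirm that the map $S^{(m+1)\rho-1}\to S^{m\rho+\sigma}$ has degree $2$ in the appropriate sense, i.e.\ that it is $2$ times a generator of $\pi_{\rho-1-\sigma}^{\Cmr_2}S^0$ acting on $\Hmr\ull\zz$. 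Each pair $\Sigma^{m\rho+\sigma}\Hmr\ull\zz\xleftarrow{2}\Sigma^{(m+1)\rho-1}\Hmr\ull\zz$ then contributes $\coker(2)$ in degree shifted by $m\rho+\sigma$ and $\ker(2)$ shifted by $(m+1)\rho$, giving the classes $\Cmr\langle\sfb_{m\rho+\sigma}\rangle$ and $\Kmr\langle\sfb_{(m+1)\rho}\rangle$. The class $e_0$ survives and splits off via the retraction $\Bmr_{\Cmr_2}\Sigma_{2+}\to S^0$.

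It remains to rule out higher differentials and resolve extensions. For higher differentials I would argue by degree, since the targets would lie in $\Cmr$ or $\Kmr$ pieces separated by $\rho$-shifts. Alternatively, I would use that each stage $\p(n\rho)$ has a two-cell cofiber structure, so the computation reduces inductively to the long exact sequence
\[\Hmr\ull\zz_\star(\p(n\rho)_+)\to\Hmr\ull\zz_\star(\p((n+1)\rho)_+)\to\Hmr\ull\zz_\star(\text{quotient}).\]
The connecting maps vanish because $\ker(2)$ and $\coker(2)$ classes land in degrees where the relevant groups are zero; I would check this using the known structure of $\Hmr\ull\zz_\star$ (the positive cone and the negative cone, with $2$-torsion generated by $a_\sigma$). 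For extensions, I would show the module decomposition splits by checking that the $\Kmr$ and $\Cmr$ summands are $2$-torsion and $a_\sigma$-compatible, comparing restrictions (to $\Hmr\zz_*(\r\p^\infty)$) and $a_\sigma$-localization (via the fixed points $\r\p^\infty\sqcup\r\p^\infty$).

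The main obstacle is identifying the equivariant attaching maps precisely, that is, showing the $d_1$ is exactly multiplication by $2$ on $\Hmr\ull\zz_\star$ with no $a_\sigma$ or $u_\sigma$ twist. A second difficulty is the possible hidden $\Hmr\ull\zz_\star$-module extensions in the negative cone. I would settle both by simultaneous restriction and geometric fixed point checks.
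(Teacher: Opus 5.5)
Your spectral-sequence setup and $d_1$ computation follow the paper, but the step you plan last, splitting the extensions, cannot succeed, because the splitting as $\Hmr\ull{\zz}_\star$-modules is false.

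\textbf{The extension step.} Restriction to underlying and $a_\sigma$-localization both kill the $a_\sigma$-power-torsion negative-cone classes $\theta_3/(u_{2\sigma}^j a_\sigma^k)$, and a non-split extension sits exactly there. Your own pairing of cells shows it.

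\begin{itemize}
\item The $(m+1)\rho$-cell is attached to the $(m\rho+\sigma)$-cell by $[\Cmr_2]\in\pi_0^{\Cmr_2}S^0=A(\Cmr_2)$. Its underlying degree is $2$. Its fixed-point degree is $0$, because the two fixed cells lie in different components of $\p(n\rho)^{\Cmr_2}$. Note that $A(\Cmr_2)$ is not cyclic, so ``$2$ times a generator'' is the wrong formulation, though $[\Cmr_2]$ does act as $2$ on $\Hmr\ull{\zz}$.
\item Every other component of every attaching map lies in $\pi_{(d-1)+(d-1)\sigma}\Hmr\ull{\ff}_2$ with $d\ge 2$, or in $\pi_{(d-1)+d\sigma}\Hmr\ull{\ff}_2$ with $d\ge 1$. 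Both groups vanish.
\item Hence, as $\Hmr\ull{\zz}$-modules, $\Hmr\ull{\zz}\wedge\Bmr_{\Cmr_2}\Sigma_{2+}\simeq\Hmr\ull{\zz}\vee\bigvee_{m\ge0}\Sigma^{m\rho+\sigma}\Hmr\ull{\ff}_2$.
\end{itemize}

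Each $\pi_\star\Hmr\ull{\ff}_2$ is an extension $0\to\Cmr\to\pi_\star\Hmr\ull{\ff}_2\to\Sigma\Kmr\to0$, and it does not split over $\Hmr\ull{\zz}_\star$. Write $\theta$ for the reduction of $\theta_2$. The unique lift of $\Sigma a_\sigma$ is $u_\sigma$, since $\Cmr_{1-\sigma}=0$. Then $\frac{\theta_2}{u_{2\sigma}}\cdot u_\sigma=\frac{\theta}{u_\sigma^2}\cdot u_\sigma=\frac{\theta}{u_\sigma}$, which is the nonzero reduction of $\theta_3$. On the other hand, $\frac{\theta_2}{u_{2\sigma}}\cdot a_\sigma=0$ because $\Hmr\ull{\zz}_{-4+3\sigma}=0$.

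In the paper's notation, the class in $\Hmr\ull{\zz}_1(\Bmr_{\Cmr_2}\Sigma_{2+})$ lifting $a_\sigma\sfb_\rho$ satisfies $\frac{\theta_2}{u_{2\sigma}}\cdot(a_\sigma\sfb_\rho)=\theta_3\sfb_\sigma\neq0$. In the displayed direct sum, by contrast, both degree-$1$ elements $a_\sigma\sfb_\rho$ and $a_\sigma^2\sfb_{\rho+\sigma}$ are killed by $\theta_2/u_{2\sigma}$. So the formula is correct additively, i.e.\ as a description of $\Emr_\infty$, but not as an isomorphism of $\Hmr\ull{\zz}_\star$-modules.

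The paper's own argument fails at the same point. It claims $\Kmr$ is projective, but $\Kmr=a_\sigma\cdot\Hmr\ull{\zz}_\star$ is killed by $2$. A splitting of $\Sigma^{-\sigma}\Hmr\ull{\zz}_\star\to\Kmr$, $1\mapsto a_\sigma$, would require an odd $c\in\Hmr\ull{\zz}_0=\zz$ with $2c=0$, which is impossible.

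\textbf{Other steps that need repair.}
\begin{itemize}
\item \emph{Cell bookkeeping.} The quotient $\p((n+1)\rho)/\p(n\rho)\cong\Th(n\xi_\rho)$ has dimension $2n+1$, with cells $S^{n\rho}$ and $S^{n\rho+\sigma}$. By the Thom isomorphism, smashing with $\Hmr\ull{\zz}$ gives $\Sigma^{n\rho}\Hmr\ull{\zz}\vee\Sigma^{n\rho+\sigma}\Hmr\ull{\zz}$. Your pairing $\{n\rho+\sigma,(n+1)\rho\}$ belongs to the shifted filtration by the $\p(n\rho+1)$, whose graded pieces are not free.
\item \emph{$a_\sigma$- or $u_{2\sigma}$-twists in $d_1$.} This worry is empty. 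The target group is $\Hmr\ull{\zz}_0=\zz$, on which restriction is injective, and that is exactly how the paper concludes.
\item \emph{Higher differentials.} Degree alone does not suffice. For $m\ge3$ and $j\ge1$, the classes $\frac{\theta_3}{u_{2\sigma}^ja_\sigma^k}\sfb_{m\rho}$ and $\frac{\theta_3}{u_{2\sigma}^{j-1}a_\sigma^{k+4}}\sfb_{(m-3)\rho+\sigma}$ sit in the right positions for a $d_3$. The paper rules this out by comparing with the $\Hmr\ull{\ff}_2$-spectral sequence, which collapses and into which $\Cmr$ and $\Kmr$ inject. Alternatively, $\Kmr=a_\sigma\Hmr\ull{\zz}_\star$ is cyclic, so linearity reduces everything to $d_r(a_\sigma\sfb_{m\rho})$, whose targets do vanish by degree.
\item \emph{Long exact sequence variant.} The connecting map does not vanish: $\partial(\sfb_{n\rho})=2\sfb_{(n-1)\rho+\sigma}$, and the top class of $\p(n\rho)$ generates a free summand. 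This variant works for a different reason: $\Hmr\ull{\zz}_\star(\Th(n\xi_\rho))$ is free, so $\partial$ is determined by its values on two generators.
\end{itemize}
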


Using the augmentation for \eqref{eqn:kRSS} and comparison with a simpler {\it localized} augmented slice spectral sequence (see \Cref{prop:kRPhiBC2Sigma2SS}), we are able to deduce all differentials in $\textbf{ASliceSS}_{\text{k}\r}(\Bmr_{\Cmr_2}\Sigma_2)$.

\begin{mainTheorem}[\Cref{thm:kRAssociatedGraded}]\label{main:kR}
    The $\Emr_\infty$-page of $\textbf{ASliceSS}_{\text{k}\r}(\Bmr_{\Cmr_2}\Sigma_{2})$ is
    \[
        \overline{\Dmr}\langle \sfb_0\rangle \oplus \overline{\Cmr}\langle \sfb_{n\rho+\sigma}\rangle_{n\geq 0}\oplus \overline{\Kmr}\langle \sfb_{n\rho}\rangle_{n\geq 1}.
    \]
    where $\overline{\Kmr}, \overline{\Cmr}$, and $\overline{\Dmr}$ are the $\Hmr\ull{\zz}_\star$-modules defined in \Cref{defn:assocGradedPieceskRSS}. The product structure on the $\mathrm{E}_\infty$-page (\cref{thm: ring str of kR BC2C2}) is induced from the product structure in \Cref{main:HZ} via the map $\text{k}\r \to \Hmr\ull{\mathbb{Z}}$.
\end{mainTheorem}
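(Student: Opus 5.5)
The plan is to determine all differentials in $\textbf{ASliceSS}_{\text{k}\r}(\Bmr_{\Cmr_2}\Sigma_2)$ by combining two comparison maps, and then to read off the $\Emr_\infty$-page summand by summand.

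\textbf{Step 1: the $\Emr_1$-page.} By \Cref{main:HZ}, the $\Emr_1$-page is a direct sum, over the slices $\Sigma^{2m\rho}\Hmr\ull{\zz}$ of \eqref{eqn:kRSlice}, of shifted copies of $\Hmr\ull{\zz}_\star\langle \sfb_0\rangle \oplus \Cmr\langle\sfb_{n\rho+\sigma}\rangle \oplus \Kmr\langle\sfb_{n\rho}\rangle$. The augmentation $\Bmr_{\Cmr_2}\Sigma_{2+}\to S^0$ is split by the inclusion of the basepoint. So the summand generated by $\sfb_0$ is a retract, and its differentials are exactly those of the slice spectral sequence of $\text{k}\r$ itself. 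That spectral sequence is known from \cite{Dugger} and the Hill--Hopkins--Ravenel computations: it has $d_3$-differentials $d_3(u_{2\sigma})=a_\sigma^3\bar v_1$, together with their $\bar v_1$- and $u_{2\sigma}^2$-multiples. Its $\Emr_\infty$-page is then what I would package as $\overline{\Dmr}\langle\sfb_0\rangle$, checking that this matches \Cref{defn:assocGradedPieceskRSS}.

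\textbf{Step 2: the reduced part via localization.} For the reduced summands $\Cmr\langle\sfb_{n\rho+\sigma}\rangle$ and $\Kmr\langle\sfb_{n\rho}\rangle$, I would invert $a_\sigma$ and pass to the localized augmented spectral sequence of \Cref{prop:kRPhiBC2Sigma2SS}. Geometric fixed points turn this into a computation of $\Phi^{\Cmr_2}\text{k}\r$-homology of $(\Bmr_{\Cmr_2}\Sigma_2)^{\Cmr_2}$, where everything is explicit, so the differentials there can be computed directly; I expect $d_3$'s given by multiplication by $\bar v_1 a_\sigma^3$, mirroring Step 1.

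\textbf{Step 3: lifting differentials.} Next I lift these localized differentials back along the localization map, using $a_\sigma$-linearity and the $\Hmr\ull{\zz}_\star$-module structure. Classes on which $a_\sigma$ acts injectively have their differentials determined this way. For the remaining $a_\sigma$-torsion classes (those in the image of transfer, or killed by $a_\sigma$ in $\Kmr$ and $\Cmr$), I would argue by degree and by the Leibniz rule with respect to the module structure over the slice spectral sequence of $\text{k}\r$. The key input is that $u_{2\sigma}$ supports a $d_3$ while the $\sfb$-generators themselves are permanent cycles, because they come from cells of the geometric filtration. Finally I verify that no longer differentials exist, using a vanishing-line or sparseness argument: slices occur only in even slice degrees, and the target groups vanish. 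This yields the quotients and subgroups $\overline{\Cmr}$ and $\overline{\Kmr}$.

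\textbf{Step 4: products, and the main obstacle.} The product claim follows because $\text{k}\r\to\Hmr\ull{\zz}$ induces a multiplicative map of spectral sequences from the slice-zero part, which is compatible with \Cref{thm:HZHomologyProductStructure}. I expect the hardest part to be Step 3 for the $a_\sigma$-torsion classes. The localization detects nothing there, so ruling out exotic differentials and hidden targets must be done by hand, likely via restriction to the underlying $\text{ku}_*(\Bmr\Sigma_2)$ computation in \eqref{eqn:kuSliceSS}, where the spectral sequence collapses.
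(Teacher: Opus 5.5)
The $\mathrm{d}_3$-differentials work as you propose. Splitting $\Bmr_{\Cmr_2}\Sigma_{2+}\to S^0$ at a fixed point is also a cleaner way to isolate the $\sfb_0$-summand than the bare augmentation. The gap is in the collapse step. After $\Emr_4$ there remain potential differentials $\mathrm{d}_r$, with $r\geq 5$ odd, of the form $a_\sigma\sfb_{k\rho}\mapsto a_\sigma\sfb_{\ell\rho+\sigma}\overline{v_1}^{\,j}$, where $\ell+j=k-1$ and $j\geq 2$. They run from filtration-zero classes of $\overline{\Kmr}\langle\sfb_{k\rho}\rangle$ into $\overline{\Cmr}\langle\sfb_{\ell\rho+\sigma}\rangle$.

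These targets survive $\mathrm{d}_3$, just as $a_\sigma\overline{v_1}^{\,j}$ does in $\text{k}\r_\star$. So neither even-slice sparseness (which only kills even-length differentials) nor vanishing of target groups rules them out. Your comparison maps do not see them either:
\begin{itemize}
\item After inverting $a_\sigma$ the targets die, because $a_\sigma^3\overline{v_1}=0$ on $\Emr_4$ makes them $a_\sigma$-torsion. The localized spectral sequence therefore only says that $\mathrm{d}_r(a_\sigma\sfb_{k\rho})$ is $a_\sigma$-torsion, which every candidate already is.
\item Restriction to the underlying $\text{ku}_*(\Bmr\Sigma_{2+})$ computation kills the sources, since $\res(a_\sigma)=0$.
\end{itemize}
Localization detects a differential when its \emph{target} is $a_\sigma$-torsion-free. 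Here the source is $a_\sigma$-torsion-free but the targets are not. So the real difficulty is not the $a_\sigma$-torsion sources where you place it.

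Your claim that the $\sfb$-generators are permanent because they come from cells does not close this gap. For $k\geq 1$, $\sfb_{k\rho}$ is not a class of $\Hmr\ull{\zz}_\star(\Bmr_{\Cmr_2}\Sigma_{2+})$ at all. It supports $\mathrm{d}_1(\sfb_{k\rho})=2\sfb_{(k-1)\rho+\sigma}$ in the geometric-filtration spectral sequence, only $\Kmr$-multiples survive, and $1\notin\Kmr$. More generally, coming from a cell does not make an $\Hmr\ull{\zz}$-class lift to $\text{k}\r$-homology. Permanence of $a_\sigma\sfb_{k\rho}$ is exactly what has to be proved.

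The missing idea is to use the internal Pontryagin product on the spectral sequence, coming from the H-space structure, rather than only the module structure over $\textbf{SliceSS}(\text{k}\r)$. By \Cref{thm:HZHomologyProductStructure}, $a_\sigma^2\sfb_{k\rho}=(a_\sigma\sfb_{(k-1)\rho})\cdot(a_\sigma\sfb_{\rho})$ for $k\geq 3$. Induct on $k$ starting from $a_\sigma\sfb_{\rho}$ and $a_\sigma\sfb_{2\rho}$, which have no room for such differentials because $\ell+j=k-1$ with $j\geq 2$ is impossible there. The Leibniz rule then gives $a_\sigma\cdot\mathrm{d}_r(a_\sigma\sfb_{k\rho})=\mathrm{d}_r(a_\sigma^2\sfb_{k\rho})=0$. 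On the other hand, $a_\sigma\cdot a_\sigma\sfb_{\ell\rho+\sigma}\overline{v_1}^{\,j}=a_\sigma^2\sfb_{\ell\rho+\sigma}\overline{v_1}^{\,j}\neq 0$ on $\Emr_4$, so the differential must vanish. Linearity over permanent cycles such as $a_\sigma$ handles the higher $a_\sigma$-multiples. This is the argument in the paper's proof of \Cref{thm:kRAssociatedGraded}.
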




\subsection{Future Directions} \

Kitchloo and Wilson \cite{KitchlooWilson} use the second Real Johnson--Wilson theory to study non-immersions of $\r\p^n$. We expect that similar techniques can be applied to obtain non-immersion results for $\p(n\rho)$. As a first step towards computing the Real Johnson--Wilson homology of these spaces, we propose to compute their $\Bmr\Pmr_\r\langle n\rangle$-homology.

The differentials in the augmented slice spectral sequence in \Cref{main:kR} are completely determined by the augmentation, the localized augmented slice spectral sequence, and the Leibniz rule. Viewing connective Atiyah Real K-theory ${\rm k}\r$ as a model for $\Bmr\Pmr_\r\langle 1\rangle$, it is natural to ask if the $\Bmr\Pmr_\r\langle n\rangle$-homology of $\Bmr_{\Cmr_2}\Sigma_2$ can be computed in a similar fashion.

\begin{question}
    Are all the differentials in the augmented slice spectral sequence computing the $\text{BP}_\r\langle n\rangle$-homology of $\Bmr_{\Cmr_2}\Sigma_2$ determined by multiplicativity and the differentials in the slice spectral sequence for $\text{BP}_\r \langle n \rangle$?
\end{question}

One can also view our work as a starting point for the study of equivariant vector fields on equivariant spheres. Classically, the general strategy is to observe that the existence of $k-1$ linearly independent vector fields on $S^{n-1}$ is equivalent to the top cell of the stunted projective space $\RP_{n-k}^{n-1}$ splitting off. This is closely related to James periodicity, with the number of orthonormal vector fields on $S^{n-1}$ given by the maximum of all $k+1$ such that $\log_2\phi(n)$ divides $n$ \cite[Proposition 16.11.2]{Husemoller}. However, this number is determined by the rank of $\widetilde{\mathrm{ko}}^0(\RP^{n-1})$, rather than $\widetilde{\mathrm{ku}}^0(\RP^{n-1})$. Thus, to approach the $\Cmr_2$-equivariant vector fields problem, one is led to a direct and careful computation of the $\text{ko}_{\Cmr_2}$-homology of $\Bmr_{\Cmr_2}\Sigma_2$.

Finally, one can view our computation as a starting point for a more complete study of the homology of $\Cmr_2$-equivariant classifying spaces $\Bmr_{\Cmr_2}\Sigma_n$. Rather than studying such spaces individually, one observes that the homology of the disjoint union of these spaces for all $n\geq 1$ forms a \emph{Hopf ring}. This Hopf ring organizes the otherwise complicated homological computations into a larger, more tractable algebraic structure. Classically, the Hopf ring associated to the mod-2 homology of the disjoint union $\coprod \Bmr\Sigma_n$ was computed by Giusti--Salvatore--Sinha \cite{GiuSalSin12}. The study of $\text{C}_2$-equivariant Hopf rings was initiated by Petersen \cite{Pet24}, and the techniques she developed may be applied to any multiplicative $\text{C}_2$-equivariant homology theory.

\begin{question}
    Imitating the techniques of \cite{GiuSalSin12}, can one compute the Hopf ring structure on the $\Hmr\underline{\f}_2$ or $\Hmr\underline{\z}$ homology of the classifying spaces $\{\Bmr_{\Cmr_2}\Sigma_n$\}?
\end{question}

\subsection{Acknowledgments}

The authors would like to thank Juan Moreno, Yueshi Hou, Guchuan Li, and Shangjie Zhang for helpful discussions. The authors would furthermore like to thank XiaoLin Danny Shi for his enormous encouragement, support, and helpful insights throughout this project.


\section{The $\Hmr\underline{\z}$-homology of $\Cmr_2$-equivariant classifying space of $\Sigma_2$}

Classically, the skeletal filtration on $\r\p^\infty$ has associated graded pieces given by spheres. In contrast, the associated graded pieces of the filtration \eqref{filtration:p(n rho)} are not spheres, but can be described completely by lifting Atiyah's work \cite{AtiyahThom} to the $\Cmr_2$-equivariant world. These identifications have been carried out in \cite[Proposition 7.14]{BWZZ25}, where it is shown that there is a $\Cmr_2$-equivariant homeomorphism
\[
    \PP(n\rho)/\PP((n-1)\rho) \cong \Th((n-1)\xi_\rho),
\]
where 
\begin{equation}\label{eqn:xi_rho bundle}
    \xi_\rho := \left\{\begin{tikzcd} S(\rho\otimes \tau_2) \times_{\Sigma_2} (\rho_{\Cmr_2} \otimes \tau_2) \ar[d,""] \\ \p(\rho) \end{tikzcd}\right.
\end{equation}
and $\tau_2$ is the sign representation for $\Sigma_2$.

By \cite[Theorem 1.26]{BZ}, $\xi_\rho$ is $\Hmr\ull{\zz}$-orientable. Fixing such an orientation, smashing \eqref{filtration:p(n rho)} with $\Hmr\ull{\zz}$, one can use the corresponding Thom isomorphism to identify the $\Emr_1$-page of the associated spectral sequence as
\begin{equation}
\label{eqn:HZSS}
\begin{aligned}
    \Emr_1 = \bigoplus_{m \geq0}\Sigma^{m\rho} \Hmr\ull{\zz}_\star(\PP(\rho)_+) \implies \Hmr\ull{\zz}_\star(\Bmr_{\Cmr_2}\Sigma_{2+}).
\end{aligned}
\end{equation}
As $\PP(\rho)$ is $\Cmr_2$-equivariantly homeomorphic to $S^\sigma$, the $\Emr_1$-page of the spectral sequence can be further identified as an $\Hmr\ull{\zz}_\star$-module:
\[
    \bigoplus_{m \geq0} \Sigma^{m\rho}\Hmr\ull{\zz}_\star(\PP(\rho)_+) \cong \Hmr\ull{\zz}_\star\langle{\sf{b}}_{m\rho}, {\sf{b}}_{m\rho + \sigma}\rangle_{m\geq 0},
\]
where ${\rm{b}}_{{m\rho}}$ and ${\rm{b}}_{{m\rho+\sigma}}$ generate the filtration-$m$ part of the spectral sequence as an $\Hmr\ull{\zz}_\star$-module and have degrees indicated by their subscript.

$\Hmr\ull{\z}_\star$ is much more complicated than its nonequivariant counterpart. Indeed, we have that \cite{Greenlees17}
\[
\Hmr\ull{\z}_\star \cong \frac{\mathbb{Z}[u_{2\sigma}, a_\sigma]}{(2a_\sigma)} \oplus \bigoplus_{i \geq 0}\mathbb{Z}\left\{\frac{\theta_2}{u_{2\sigma}^i}\right\} \oplus \bigoplus_{\substack{j \geq 0 \\ k \geq 0}}\mathbb{F}_2\left\{\frac{\theta_3}{u_{2\sigma}^ja_\sigma^k} \right\},
\]
where $|u_{2\sigma}|=2-2\sigma$, $|a_\sigma| = -\sigma$, and $|\theta_n| = -n+n\sigma$. The first summand is often referred to as the ``positive cone", while the second two summand describe the ``negative cone". The notation of the negative cone is indicative of the action of the positive cone: $\theta_2$ is infinitely $u_{2\sigma}$-divisible, while $\theta_3$ is infinitely $u_{2\sigma}$ and $a_\sigma$-divisible. We also have that $\theta_2 \cdot \theta_3 =0$, implying that all products of elements in the negative cone are trivial. We depicted $\mathrm{H}\ull{\z}_\star$ in charts in \Cref{fig:HZ}. A black $\blacksquare$ denotes a $\zz$, and a \textcolor{blue}{blue} $\textcolor{blue}{\bullet}$ denotes a $\zz/2$. A \textcolor{blue}{blue} vertical line represents $a_\sigma$-multiplication.

\begin{figure}
    \begin{minipage}{1\textwidth}
    \centering
    \includegraphics[scale=.7]{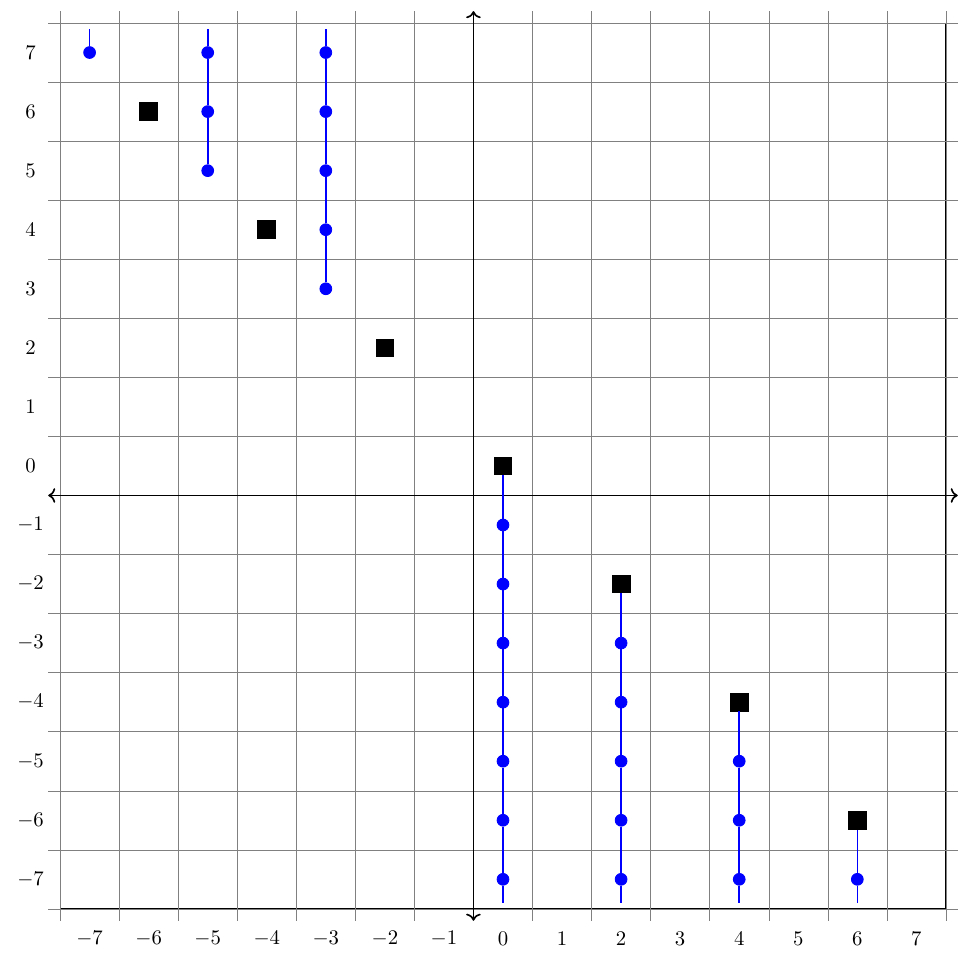}
    \caption{The coefficients $\Hmr\ull{\z}_\star$. }
    \label{fig:HZ}
    \end{minipage}
\end{figure}

\begin{lemma} \label{lem: d1-differential in HZSS}
    In \eqref{eqn:HZSS}, all of the $\rm{d}_1$-differentials are determined by 
    \[{\rm{d}}_1(\sfb_{m\rho}) = 2\cdot \sfb_{(m-1)\rho + \sigma}.\]
\end{lemma}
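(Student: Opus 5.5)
The plan is to reduce the computation of $\mathrm{d}_1$ to a computation on the two-cell subquotient $\PP((m+1)\rho)/\PP((m-1)\rho)$, and then to detect the attaching degree by passing to underlying spaces and to geometric fixed points. The $\mathrm{E}_1$-page is a free $\Hmr\ull{\zz}_\star$-module on the classes $\sfb_{m\rho}, \sfb_{m\rho+\sigma}$. The differential $\mathrm{d}_1$ is $\Hmr\ull{\zz}_\star$-linear, since it is the connecting map of a cofiber sequence of $\Hmr\ull{\zz}$-modules. So it suffices to compute $\mathrm{d}_1$ on generators.

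First I will dispose of $\sfb_{m\rho+\sigma}$ and $\sfb_0$. The class $\mathrm{d}_1(\sfb_{m\rho+\sigma})$ lies in filtration $m-1$ in degree $m\rho+\sigma-1$. Writing this as $(m-1)\rho + (\rho+\sigma-1)$, the target is $\Hmr\ull{\zz}_\star$ in degrees $2\sigma$ (on $\sfb_{(m-1)\rho}$) and $\sigma$ (on $\sfb_{(m-1)\rho+\sigma}$). Reading off Greenlees' description, $\Hmr\ull{\zz}_{V}=0$ for $V=2\sigma$ and $V=\sigma$: the positive cone has $a_\sigma$ only in negative $\sigma$-degree, and the negative cone lives in degrees $-n+n\sigma$ with $n\ge2$ (shifted further down). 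Hence $\mathrm{d}_1(\sfb_{m\rho+\sigma})=0$.

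For $\mathrm{d}_1(\sfb_{m\rho})$, which has degree $m\rho-1=(m-1)\rho+\sigma$, the possible targets are $\Hmr\ull{\zz}_{\sigma}\cdot\sfb_{(m-1)\rho}=0$ and $\Hmr\ull{\zz}_0\cdot \sfb_{(m-1)\rho+\sigma}\cong\zz$. So $\mathrm{d}_1(\sfb_{m\rho})=c\cdot\sfb_{(m-1)\rho+\sigma}$ for some integer $c$, possibly depending on the chosen orientations. It remains to show $c=\pm2$; we can then normalize to $c=2$ by changing the sign of the generator.

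To determine $c$, I apply the restriction to the underlying nonequivariant spectra, which is compatible with the filtration and the Thom isomorphisms. There the filtration becomes $\r\p^{2m-1}\subset\r\p^{2m+1}$, and the subquotient $\Th((m-1)\xi_\rho)$ restricts to the two-cell complex $\r\p^{2m-1}/\r\p^{2m-3}$. The class $\sfb_{m\rho}$ restricts to the $2m$-cell and $\sfb_{(m-1)\rho+\sigma}$ to the $(2m-1)$-cell. Classically, the cellular boundary in $\r\p^\infty$ from the even cell $e^{2m}$ to $e^{2m-1}$ is multiplication by $2$, so the restricted differential is $\pm2$. Since restriction $\Hmr\ull{\zz}_0\to \zz$ is an isomorphism ($\ull{\zz}(\Cmr_2/\Cmr_2)\to\ull{\zz}(\Cmr_2/e)$ is the identity), we get $c=\pm 2$.

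The main obstacle is justifying that the restriction of the spectral sequence, with its Thom isomorphism identifications, agrees with the cellular chain complex of $\r\p^\infty$ (two cells per filtration step). This needs the fact that the $\Hmr\ull{\zz}$-orientation of $\xi_\rho$ restricts to an orientation of the underlying bundle. I would argue this using naturality of the Thom class under restriction and the standard identification of the skeletal spectral sequence with cellular chains. If needed, the geometric fixed points give an independent consistency check.
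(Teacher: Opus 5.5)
Your proposal is correct and follows essentially the same route as the paper. Both arguments reduce to generators by $\Hmr\ull{\zz}_\star$-linearity, and both note that $\Emr_1^{m-1,m\rho+\sigma-1}=0$ and $\Emr_1^{m-1,m\rho-1}\cong\zz\langle\sfb_{(m-1)\rho+\sigma}\rangle$ for degree reasons. Both then fix the coefficient by restricting to the underlying spectral sequence for $\r\p^\infty$, where $\mathrm{d}_1(\sfb_{2m})=2\sfb_{2m-1}$.

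Your points that restriction $\Hmr\ull{\zz}_0\to\zz$ is an isomorphism and that the $\Hmr\ull{\zz}$-orientation restricts to an underlying orientation are used implicitly in the paper's proof; you just state them. The geometric fixed point check is not needed.
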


\begin{proof}
    We begin by comparing with the underlying spectral sequence:
    \begin{equation}\label{eqn:underlyingHZSS}
        \Emr_1 = \zz\langle\sfb_{2m}, \sfb_{2m+1}\rangle_{m\geq 0} \implies \Hmr\zz_*(\Bmr\Sigma_{2+}).
    \end{equation}
    The only nonzero differentials in this spectral sequence are ${\rm{d}}_1({\sfb_{2m}}) = 2\cdot \sfb_{2m-1}$ for $m \geq 0$. 
    Since the equivariant differentials $\rm{d}_r:\Emr_1^{n, V}\to \Emr_1^{n-r, V-1}$ are $\Hmr\ull{\zz}_\star$-module maps, it suffices to study their behavior on the $\sfb_{n\rho}$ and $\sfb_{m\rho + \sigma}$ classes. Comparing with the underlying spectral sequence \eqref{eqn:underlyingHZSS}, 
    \[
        \res^{\Cmr_2}_\sfe\circ\,\rm{d}_1(\sfb_{m\rho}) = \rm{d}_1\circ \res^{\Cmr_2}_\sfe(\sfb_{m\rho}) = 2\cdot \sfb_{2(m-1)+1}.
    \]
    As $\Emr_1^{m-1, m\rho-1}\cong \zz\langle \sfb_{(m-1)\rho + \sigma}\rangle$, it follows that $\rm{d}_1(\sfb_{m\rho}) = 2\cdot \sfb_{(m-1)\rho + \sigma}$. Similarly, 
    \[
        \res^{\Cmr_2}_\sfe\circ\,\rm{d}_1(\sfb_{m\rho+\sigma}) = \rm{d}_1\circ \res^{\Cmr_2}_\sfe(\sfb_{m\rho + \sigma}) = 0,
    \]
    and $\Emr_1^{m-1, m\rho + \sigma - 1} = 0$ implies $\rm{d}_1(\sfb_{m\rho + \sigma}) = 0$. 
\end{proof}

\begin{remark}
    The differentials in the underlying spectral sequence can be rewritten as ${\rm{d}}_1(\sfb_{2m}) = 2 \cdot \sfb_{2(m-1)+1},$ which more closely resembles the equivariant differential.
\end{remark}

The $\Emr_2$-page of \eqref{eqn:HZSS} can be described additively as
\[
    \Emr_2\cong \Hmr\ull{\zz}_\star\langle\sfb_{0}\rangle \oplus \Cmr\langle \sfb_{m\rho+\sigma}\rangle_{m\geq 0} \oplus \Kmr\langle \sfb_{m\rho}\rangle_{m\geq 1},
\]
where $\Cmr \coloneqq \coker\left(\begin{tikzcd}\Hmr\ull{\zz}_\star\ar[r, "2\cdot"] & \Hmr\ull{\z}_\star\end{tikzcd}\right)$ and $\Kmr \coloneqq \ker\left(\begin{tikzcd}\Hmr\ull{\zz}_\star\ar[r, "2\cdot"] & \Hmr\ull{\z}_\star\end{tikzcd}\right)$. 

We depict the $\Emr_1$-page in \Cref{fig:duckduckHZE1}, where the $\text{d}_1$-differential is drawn in red, and the $\Emr_2$-page in \Cref{fig:duckduckHZEinfty}. In both figures, we suppress the filtration degree.  
\begin{figure}
    \begin{minipage}{.45\textwidth}
    \centering
    \includegraphics[scale=.75]{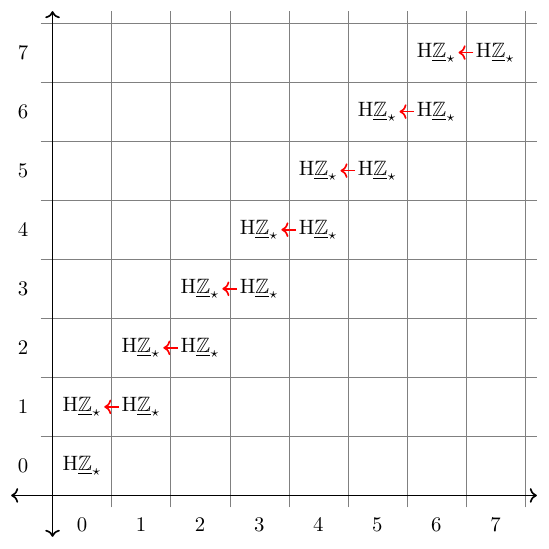}
    \caption{The $\Emr_1$-page of \eqref{eqn:HZSS}.}
    \label{fig:duckduckHZE1}
    \end{minipage}
    \begin{minipage}{.45\textwidth}
        \includegraphics[scale=.75]{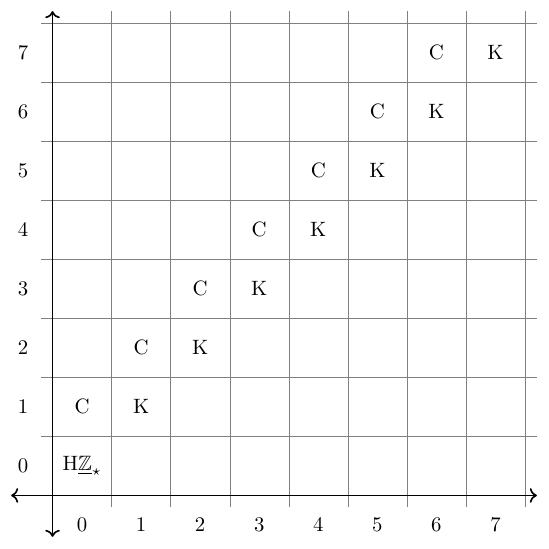}
        \caption{The $\Emr_2$-page of \eqref{eqn:HZSS}.}
        \label{fig:duckduckHZEinfty}
    \end{minipage}
\end{figure}

\begin{lemma} \label{lem: dr-differential in HZSS}
    For $r\geq 2$, all ${\rm{d}}_r$ differentials in \eqref{eqn:HZSS} vanish.
\end{lemma}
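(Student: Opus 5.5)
We need to show that all higher differentials vanish on the $\Emr_2$-page
\[
\Emr_2\cong \Hmr\ull{\zz}_\star\langle\sfb_{0}\rangle \oplus \Cmr\langle \sfb_{m\rho+\sigma}\rangle_{m\geq 0} \oplus \Kmr\langle \sfb_{m\rho}\rangle_{m\geq 1}.
\]
The differentials are $\Hmr\ull{\zz}_\star$-linear, so the plan is to reduce to checking a small set of module generators. However, $\Kmr$ and $\Cmr$ are not cyclic, so we work with a finite list of generating families. For $\Cmr\langle\sfb_{m\rho+\sigma}\rangle$ the class $\sfb_{m\rho+\sigma}$ generates, together with the negative-cone classes $\tfrac{\theta_2}{u_{2\sigma}^i}\sfb_{m\rho+\sigma}$ (mod $2$). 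For $\Kmr\langle\sfb_{m\rho}\rangle$, the kernel of $2$ on $\Hmr\ull{\zz}_\star$ is the $2$-torsion: the ideal generated by $a_\sigma$ in the positive cone, together with the classes $\tfrac{\theta_3}{u_{2\sigma}^ja_\sigma^k}$. So the generators are $a_\sigma\sfb_{m\rho}$ and $\tfrac{\theta_3}{u_{2\sigma}^ja_\sigma^k}\sfb_{m\rho}$.

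\textbf{Step 1: sparsity.} A $\mathrm d_r$ with $r\ge 2$ lowers filtration from $m$ to $m-r$ and has degree $V\mapsto V-1$. I would compare the degrees of the sources listed above with the degrees of the nonzero classes in filtration $m-r$. Those are of the form $x\,\sfb_{(m-r)\rho}$ or $x\,\sfb_{(m-r)\rho+\sigma}$ with $x$ in $\Kmr$ or $\Cmr$. This reduces to computing $\Hmr\ull{\zz}_\star$ in degrees of the form
\[
|x| + r\rho - 1 \quad\text{and}\quad |x| + r\rho-\sigma-1 .
\]
Using the explicit description of $\Hmr\ull{\zz}_\star$, which is concentrated in the positive and negative cones, I expect many of these target groups to vanish. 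I also expect that the surviving cases have the target in a $2$-torsion group while the source is $\zz$, or conversely.

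\textbf{Step 2: restriction and the underlying spectral sequence.} For the generators $\sfb_0$ and $\sfb_{m\rho+\sigma}$, which restrict to $\sfb_0$ and $\sfb_{2m+1}$, I would argue as in \cref{lem: d1-differential in HZSS}. The underlying spectral sequence \eqref{eqn:underlyingHZSS} has $\Emr_2$ equal to $\zz/2$ in odd degrees and $\zz$ in degree $0$, and it collapses. One subtlety: the underlying target may be $2$-torsion while the equivariant target is not detected by restriction. For that case I would instead use the geometric fixed points, or the $a_\sigma$-localization.

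\textbf{Step 3: the $a_\sigma$-towers, the main obstacle.} The hardest part is the infinitely $a_\sigma$-divisible classes, both $a_\sigma^k\sfb_{m\rho}$ and the $\theta_3$-families, since these are not detected by restriction. My first attempt would be $a_\sigma$-linearity. If $\mathrm d_r(\sfb_{m\rho+\sigma})=0$ and $\mathrm d_r(a_\sigma \sfb_{m\rho})=0$, then the whole positive-cone part is handled. For the negative cone, the classes are $a_\sigma$-divisible, so a nonzero differential on $\tfrac{\theta_3}{a_\sigma^k}\sfb$ would have to be compatible with $a_\sigma$-multiplication. I would then show that the corresponding target towers are not $a_\sigma$-divisible in the required way, or that they vanish for degree reasons.

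\textbf{Cross-check via geometric fixed points.} Alternatively, inverting $a_\sigma$ yields a spectral sequence computing $\Phi^{\Cmr_2}$ of $\Hmr\ull{\zz}\wedge \Bmr_{\Cmr_2}\Sigma_{2+}$. This equals $\Phi^{\Cmr_2}\Hmr\ull{\zz}\wedge (\Bmr_{\Cmr_2}\Sigma_2)^{\Cmr_2}_+$, whose fixed points are known (two copies of $\Bmr\Sigma_2$, roughly). So I would finish by a rank count: the fixed-point $\mathrm{E}_2$ already matches the known abutment, which forces every higher differential in the localized spectral sequence to vanish. This in turn rules out differentials on the $a_\sigma$-periodic classes.
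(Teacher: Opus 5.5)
There is a genuine gap, in the negative-cone part of $\Kmr\langle\sfb_{m\rho}\rangle$.

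First, what works. Your positive-cone argument is sound, and simpler than you expect. $\Cmr=\Hmr\ull{\zz}_\star/2$ is in fact cyclic, generated by the image of $1$. So on $\Cmr\langle\sfb_{m\rho+\sigma}\rangle$ you only need $\mathrm{d}_r(\sfb_{m\rho+\sigma})=0$. Its possible targets sit in coefficient degrees $(r-1)+r\sigma$ and $(r-1)+(r+1)\sigma$, where $\Hmr\ull{\zz}_\star=0$. Likewise $\mathrm{d}_r(a_\sigma\sfb_{m\rho})$ lands in a zero group, and linearity then disposes of $a_\sigma\ff_2[a_\sigma,u_{2\sigma}]\sfb_{m\rho}$. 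Your ``$\zz$ versus $2$-torsion'' heuristic is beside the point, since every class of positive filtration on $\Emr_2$ is $2$-torsion.

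The gap concerns the classes $\tfrac{\theta_3}{u_{2\sigma}^ja_\sigma^k}\sfb_{m\rho}$, which have nonzero potential targets.
\begin{itemize}
\item For odd $r$ with $3\le r\le 2j+1$ and $j'=j-\tfrac{r-1}{2}$, degrees allow $\mathrm{d}_r$ to hit $\tfrac{\theta_3}{u_{2\sigma}^{j'}a_\sigma^{k+2r-2}}\sfb_{(m-r)\rho+\sigma}$ and $\tfrac{\theta_3}{u_{2\sigma}^{j'}a_\sigma^{k+2r-1}}\sfb_{(m-r)\rho}$. For example, $\mathrm{d}_3(\tfrac{\theta_3}{u_{2\sigma}}\sfb_{m\rho})=\tfrac{\theta_3}{a_\sigma^4}\sfb_{(m-3)\rho+\sigma}$ is degree-compatible. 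So ``vanish for degree reasons'' fails.
\item Your divisibility test also fails. Source and target families are both infinitely $a_\sigma$- and $u_{2\sigma}$-divisible with matching indexing, so dividing down is consistent with a nonzero differential.
\item Restriction to the underlying is zero on these classes.
\item They are $a_\sigma$-torsion, so they die when $a_\sigma$ is inverted. Your geometric-fixed-point rank count, though correct for the localized spectral sequence, is therefore blind to them.
\end{itemize}

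The fix within your framework is to multiply up rather than divide. The source is killed by $a_\sigma^{k+1}$. But $a_\sigma^{k+1}$ times either potential target is $\tfrac{\theta_3}{u_{2\sigma}^{j'}a_\sigma^{2r-3}}$ or $\tfrac{\theta_3}{u_{2\sigma}^{j'}a_\sigma^{2r-2}}$ times a generator, which is nonzero. So $a_\sigma$-linearity forces $\mathrm{d}_r=0$.

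The paper avoids this case analysis entirely. It maps to the $\Hmr\ull{\ff}_2$-spectral sequence for the same filtration, which collapses at $\Emr_1$. The composite $\Kmr\to\Hmr\ull{\zz}_\star\to\Hmr\ull{\ff}_{2\star}$ is injective, and so is $\Cmr=\Hmr\ull{\zz}_\star/2\to\Hmr\ull{\ff}_{2\star}$. Hence any differential on $\Kmr\langle\sfb_{m\rho}\rangle$ would be detected by a differential mod $2$, and those are all zero.
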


\begin{proof}
    Notice the copy of $\Cmr$ in filtration $m$ is generated by $1\cdot \sfb_{m\rho+\sigma}$ as an $\Hmr\ull{\zz}_\star$-module. The vanishing regions in $\Hmr\ull{\zz}_\star$ imply all higher differentials on this class vanish.

    The copy of $\Kmr$ in filtration $m$ is not generated by a single class as an $\Hmr\ull{\zz}_\star$-module, so we must determine potential differentials on classes of the form $a {\sf{b}}_{m\rho}$ for all $a \in \mathrm{H}\ull{\z}_\star$. The ring map $\Hmr\ull{\z}\to \Hmr\ull{\ff}_2$ induces a map of spectral sequences arising from \eqref{filtration:p(n rho)}. The $\Hmr\ull{\ff}_2$-spectral sequence \cite{NickG} collapses on the $\Emr_1$-page. The composition 
    \[
    \begin{tikzcd}
        \Kmr \ar[r, ""] & \Hmr\ull{\zz}_\star \ar[r, ""] & \Hmr\ull{\ff}_{2\star}
    \end{tikzcd}
    \]
    is an injection, as can be seen by \Cref{fig:K}. It follows that the $\mathrm{d}_r$-differential on any $\alpha \sfb_{m\rho}$ with $\alpha \in \Kmr$ is trivial.
\end{proof}

Therefore, the spectral sequence \eqref{eqn:HZSS} collapses at $\Emr_2 = \Emr_\infty$. As $\Kmr$ is a projective $\Hmr\ull{\zz}_\star$-module, there are no hidden extensions involving the $\Kmr$-terms. By comparing with underlying, there are no hidden extensions among the $\Cmr$-terms. So combining \cref{lem: d1-differential in HZSS} and \cref{lem: dr-differential in HZSS}, we deduce the following result.

\begin{thm}\label{thm:HZHomologyBC2Sigma2}
    As an $\Hmr\ull{\zz}_\star$-module, 
    \[
        \Hmr\ull{\zz}_\star(\Bmr_{\Cmr_2}\Sigma_{2+})\cong \Hmr\ull{\zz}_\star\langle \sfb_{0}\rangle \oplus \Cmr\langle \sfb_{m\rho + \sigma}\rangle_{m\geq 0} \oplus \Kmr\langle \sfb_{m\rho}\rangle_{m\geq 1}.
    \]
\end{thm}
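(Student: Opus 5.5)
The plan is to run the filtration spectral sequence \eqref{eqn:HZSS} to the end and then resolve extensions. The $\Emr_1$-page is the free $\Hmr\ull{\zz}_\star$-module on $\sfb_{m\rho}$ and $\sfb_{m\rho+\sigma}$. By \Cref{lem: d1-differential in HZSS}, the only nonzero $\mathrm{d}_1$ maps the summand on $\sfb_{m\rho}$ to the summand on $\sfb_{(m-1)\rho+\sigma}$ by multiplication by $2$, for $m\ge 1$, while $\sfb_0$ and all $\sfb_{m\rho+\sigma}$ are cycles. The $\Emr_2$-page is therefore
\[
\Hmr\ull{\zz}_\star\langle\sfb_0\rangle\oplus \Cmr\langle\sfb_{m\rho+\sigma}\rangle_{m\ge0}\oplus\Kmr\langle\sfb_{m\rho}\rangle_{m\ge1}.
\]
By \Cref{lem: dr-differential in HZSS} the spectral sequence collapses, so $\Emr_2=\Emr_\infty$. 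The filtration is exhaustive, with colimit $\Bmr_{\Cmr_2}\Sigma_{2+}$, and it is finite in each degree. Hence the spectral sequence converges strongly, and it remains to show that the resulting filtration on $\Hmr\ull{\zz}_\star(\Bmr_{\Cmr_2}\Sigma_{2+})$ splits as $\Hmr\ull{\zz}_\star$-modules.

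For splitting I would argue inductively over the finite stages $\PP(n\rho)$, using the long exact sequences of the cofibrations $\PP((n-1)\rho)_+\to\PP(n\rho)_+\to \Th((n-1)\xi_\rho)$, and then pass to the colimit. First, the summand $\Hmr\ull{\zz}_\star\langle\sfb_0\rangle$ splits off canonically, since the basepoint inclusion $S^0\to\Bmr_{\Cmr_2}\Sigma_{2+}$ and the collapse map $\Bmr_{\Cmr_2}\Sigma_{2+}\to S^0$ compose to the identity. Second, the $\Kmr$-terms split: the paper asserts $\Kmr$ is projective as an $\Hmr\ull{\zz}_\star$-module, so any surjection of modules onto a $\Kmr$-summand admits a section. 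Ordering the extensions so that each step is of the form (lower filtration) $\to$ ? $\to\Kmr\langle\sfb_{m\rho}\rangle$ gives splittings of all of these. Concretely, lifts of $\alpha\sfb_{m\rho}$ with $\alpha\in\Kmr$ can be chosen compatibly with the module action.

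The step I expect to be the main obstacle is ruling out hidden extensions among the $\Cmr$-terms. For example, $2$ times a lift of $\sfb_{m\rho+\sigma}$ might be a nonzero class in lower filtration, or $a_\sigma$- or $u_{2\sigma}$-multiplication might jump filtration. My first move is to restrict to the underlying level, where $\res^{\Cmr_2}_\sfe$ sends the spectral sequence to \eqref{eqn:underlyingHZSS}, with abutment $\Hmr\zz_*(\Bmr\Sigma_{2+})=\zz,\zz/2,0,\zz/2,\dots$. There the class $\sfb_{2m+1}$ is exactly $2$-torsion with no extension. In each degree, the possible targets of an exotic extension from the $\Cmr$-summands are detected by restriction. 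The reason is that the potential target classes, lying in $\Kmr$- or $\Cmr$-summands of lower filtration in the relevant degrees, are either in integer-type degrees where restriction is injective on the $\zz$-summands, or are $a_\sigma$-divisible $\zz/2$'s. For the latter I would use the map to $\Hmr\ull{\ff}_2$, whose spectral sequence collapses with no extensions, exactly as in \Cref{lem: dr-differential in HZSS}. After checking degrees against \Cref{fig:HZ}, this should show that $2\cdot\widetilde{\sfb}_{m\rho+\sigma}=0$ and that every positive-cone multiplication is filtration-preserving. Each $\Cmr$-summand then lifts to an honest submodule, and assembling the three families gives the stated isomorphism.
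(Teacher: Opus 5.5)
Your proposal follows the paper's proof step for step: the $\mathrm{d}_1$ computation, collapse at $\Emr_2$, then splitting the $\Kmr$-terms by projectivity and the $\Cmr$-terms by comparison. The gap is in the extension step, and the paper's proof fails in exactly the same place.

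\textbf{$\Kmr$ is not projective.} It is cyclic on $a_\sigma$, since each $\theta_3/u_{2\sigma}^ja_\sigma^k$ equals $a_\sigma\cdot\theta_3/u_{2\sigma}^ja_\sigma^{k+1}$. A section of the surjection $\Sigma^{-\sigma}\Hmr\ull{\zz}_\star\to\Kmr$, $1\mapsto a_\sigma$, would have to send $a_\sigma$ to a $2$-torsion element of $\Hmr\ull{\zz}_0\cong\zz$, hence to $0$. So you cannot use projectivity to split off the $\Kmr$-terms.

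\textbf{The extensions are genuinely nontrivial.} They run from $\Kmr\langle\sfb_{(m+1)\rho}\rangle$ to $\Cmr\langle\sfb_{m\rho+\sigma}\rangle$.
\begin{itemize}
\item Since $(m+1)\rho-1=m\rho+\sigma$, the $\sfb_{(m+1)\rho}$-cell is attached to the $\sfb_{m\rho+\sigma}$-cell by the self-map $2$ of $\Sigma^{m\rho+\sigma}\Hmr\ull{\zz}$. All other components of the connecting maps vanish for degree reasons.
\item Inductively, $\Hmr\ull{\zz}\wedge\Bmr_{\Cmr_2}\Sigma_{2+}\simeq\Hmr\ull{\zz}\vee\bigvee_{m\ge0}\Sigma^{m\rho+\sigma}\Hmr\ull{\ff}_2$ as $\Hmr\ull{\zz}$-modules.
\item In $\Hmr\ull{\ff}_{2\star}$, the class $u_\sigma$ has Bockstein $a_\sigma$ (because $\Hmr\ull{\zz}_{1-\sigma}=0$). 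Reduction mod $2$ carries $\theta_3$ to $\theta/u_\sigma$, since $\zz/2\{\theta_3\}\to\ff_2\{\theta/u_\sigma\}$ is injective. Hence $\theta_3\cdot u_\sigma=\theta$, which is the reduction of $\theta_2$.
\item Consequently the unique class $x$ detected by $a_\sigma\sfb_\rho$ satisfies $\theta_3x=\theta_2\sfb_\sigma\neq0$, even though $\theta_3a_\sigma=0$ in $\Kmr$.
\item Likewise, $a_\sigma$ times the lift of $\theta_3\sfb_\rho$ equals $\theta_2\sfb_\sigma$. This is a hidden $a_\sigma$-extension, contrary to your expectation that positive-cone multiplications introduce none.
\end{itemize}

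\textbf{No $\Hmr\ull{\zz}_\star$-module isomorphism of the stated form exists.} This is not just a matter of choosing lifts badly. In degree $1$, both $\Hmr\ull{\zz}_\star(\Bmr_{\Cmr_2}\Sigma_{2+})$ and the claimed module are $\ff_2^2$; in the latter they are spanned by $a_\sigma\sfb_\rho$ and $a_\sigma^2\sfb_{\rho+\sigma}$. But $\theta_3$ acts by zero on the claimed module in that degree and nontrivially on the actual one.

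Your proposed checks would not have excluded this extension:
\begin{itemize}
\item Restriction to the underlying level cannot see it, because both $\theta_3$ and $\theta_2\sfb_\sigma$ restrict to zero.
\item The comparison with $\Hmr\ull{\ff}_2$-homology actually detects it. The class $x$ maps to $u_\sigma\sfb_\sigma+a_\sigma\sfb_\rho$, and $\theta_3$ times this is $\theta\sfb_\sigma\neq0$ in the free module.
\end{itemize}

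What the spectral sequence argument does establish is the additive statement, i.e.\ the $\Emr_\infty$-page. The correct $\Hmr\ull{\zz}_\star$-module structure is
\[
\Hmr\ull{\zz}_\star\langle\sfb_0\rangle\oplus\bigoplus_{m\ge0}\Sigma^{m\rho+\sigma}\Hmr\ull{\ff}_{2\star},
\]
where each summand $\Sigma^{m\rho+\sigma}\Hmr\ull{\ff}_{2\star}$ is a non-split extension of $\Kmr\langle\sfb_{(m+1)\rho}\rangle$ by $\Cmr\langle\sfb_{m\rho+\sigma}\rangle$.
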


As a result of the intricate structure of $\mathrm{H}\ull{\z}_\star$,  while \Cref{fig:duckduckHZEinfty} provides a concise depiction of the $\Emr_\infty$-page of \eqref{eqn:HZSS}, it also suppresses the underlying complexity of this computation. These details will be necessary as we compute with $\text{k}\mathbb{R}$. For the reader’s convenience, we include more detailed charts describing $\text{K}$ and $\text{C}$ in \Cref{fig:K} and \Cref{fig:C}, respectively.

\begin{figure}
    \begin{minipage}{.45\textwidth}
    \centering
    \includegraphics[scale=.4]{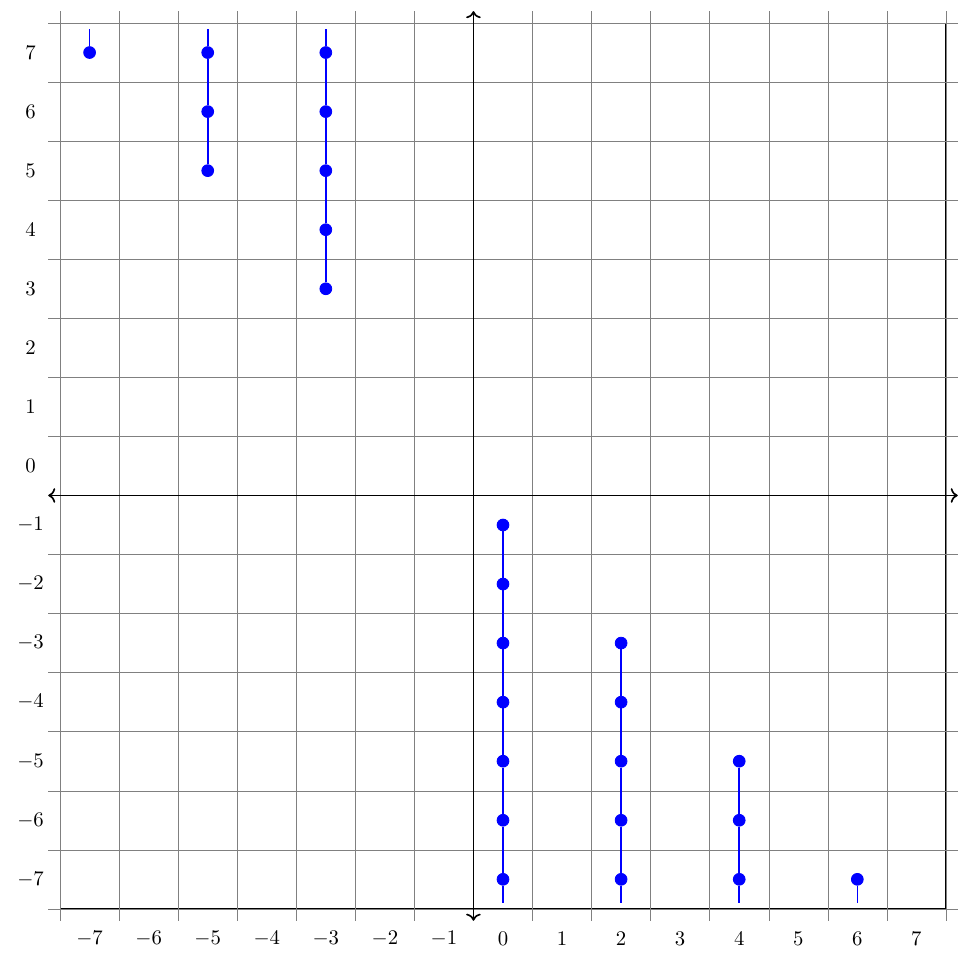}
    \caption{The $\Hmr\ull{\z}_\star$-module \\ $\text{K} \coloneq \text{ker}(\Hmr\ull{\zz}_\star \xrightarrow{2}\Hmr\ull{\zz}_\star)$.}
    \label{fig:K}
    \end{minipage}
    \begin{minipage}{.45\textwidth}
        \includegraphics[scale=.4]{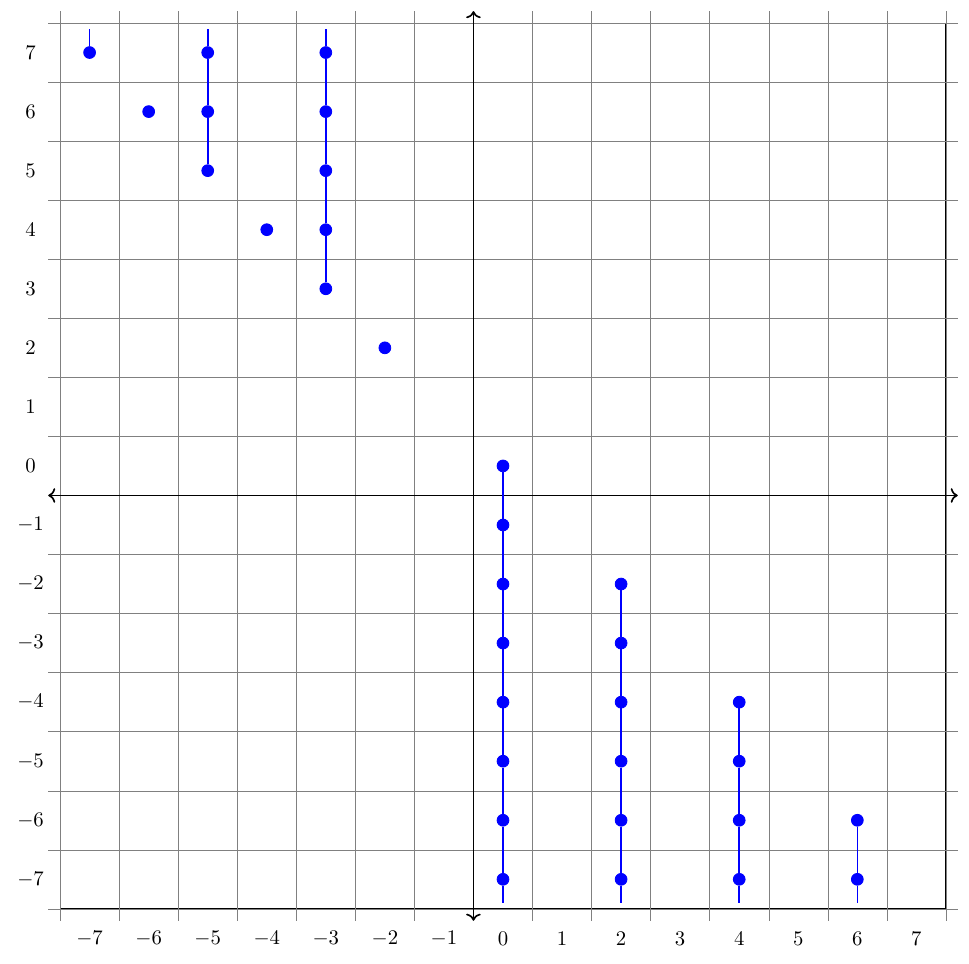}
        \caption{The $\Hmr\ull{\z}_\star$-module \\ $\text{C} \coloneq \text{coker}(\Hmr\ull{\zz}_\star \xrightarrow{2}\Hmr\ull{\zz}_\star)$.}
        \label{fig:C}
    \end{minipage}
\end{figure}

The filtration \eqref{filtration:p(n rho)} descends to a filtration on the $\text{C}_2$-equivariant projective space $\mathbb{P}(n\rho)$ by truncation. Smashing with $\Hmr\ull{\z}$ yields a spectral sequence of the form
\begin{equation}
\label{ss:HZPnrho}
    \Emr_1 = \bigoplus_{m =0}^n\Sigma^{m\rho}\Hmr\ull{\z}_\star(\mathbb{P}(\rho)_+) \implies \Hmr\ull{\z}_\star(\mathbb{P}(n\rho)_+).
\end{equation}
The following is immediate.
\begin{corollary}
    The spectral sequence \eqref{ss:HZPnrho} is determined by the differential
    \[\text{d}_1({\sf{b}}_{m\rho}) = 2 \cdot {\sf{b}}_{(m-1)\rho + \sigma}\]
    and collapses at the $\Emr_2$-page. As an $\Hmr\ull{\z}_\star$-module, 
    \[\Hmr\ull{\z}_\star(\mathbb{P}(n\rho)_+) \cong \Hmr\ull{\z}_\star\langle {\sf{b}}_0\rangle \oplus \text{C}\langle{\sf{b}}_{m\rho+\sigma}\rangle_{m=0}^n \oplus \text{K}\langle {\sf{b}}_{m\rho} \rangle_{m=1}^n.\]
\end{corollary}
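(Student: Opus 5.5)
The plan is to deduce everything from the computation for $\Bmr_{\Cmr_2}\Sigma_2$, using naturality of the filtration \eqref{filtration:p(n rho)} under the inclusion $\p(n\rho)\hookrightarrow\p(\infty\rho)$. The truncated filtration of $\p(n\rho)$ is the restriction of the infinite one. Its associated graded pieces are the same Thom spaces $\Th((m-1)\xi_\rho)$, with the same $\Hmr\ull{\zz}$-Thom classes. So smashing with $\Hmr\ull{\zz}$ gives a map of spectral sequences from \eqref{ss:HZPnrho} to \eqref{eqn:HZSS}. On $\Emr_1$-pages this map is the inclusion of the summands of filtration at most the top filtration of $\p(n\rho)$, and it is compatible with the $\Hmr\ull{\zz}_\star$-module structure and with restriction to the underlying spectral sequence for $\r\p^{2n-1}$.

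For the $\rm d_1$-differentials I would rerun the proof of \Cref{lem: d1-differential in HZSS} verbatim inside the truncated spectral sequence. Restricting to underlying gives $\res^{\Cmr_2}_\sfe \rm d_1(\sfb_{m\rho}) = 2\cdot\sfb_{2(m-1)+1}$, and the target group $\Emr_1^{m-1,m\rho-1}\cong\zz\langle\sfb_{(m-1)\rho+\sigma}\rangle$ forces $\rm d_1(\sfb_{m\rho})=2\cdot\sfb_{(m-1)\rho+\sigma}$. The same vanishing-region argument gives $\rm d_1(\sfb_{m\rho+\sigma})=0$. Alternatively, injectivity of the $\Emr_1$-map into \eqref{eqn:HZSS} lets one read these off from \Cref{lem: d1-differential in HZSS} directly. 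The resulting $\Emr_2$-page is a copy of $\Hmr\ull{\zz}_\star\langle\sfb_0\rangle$, kernels $\Kmr$ on the $\sfb_{m\rho}$, and cokernels $\Cmr$ on the $\sfb_{m\rho+\sigma}$ that are hit.

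For collapse I would copy the proof of \Cref{lem: dr-differential in HZSS}. The classes $1\cdot\sfb_{m\rho+\sigma}$ support no higher differentials because of the vanishing regions of $\Hmr\ull{\zz}_\star$. Classes $\alpha\sfb_{m\rho}$ with $\alpha\in\Kmr$ are detected under $\Hmr\ull{\zz}\to\Hmr\ull{\ff}_2$, and the $\Hmr\ull{\ff}_2$-spectral sequence for $\p(n\rho)$ collapses at $\Emr_1$ (by truncating \cite{NickG}), so these classes also survive. Hidden extensions are ruled out as in \Cref{thm:HZHomologyBC2Sigma2}: $\Kmr$ is projective, and on the $\Cmr$-terms one compares with the underlying computation of $\Hmr\zz_*(\r\p^{2n-1}_+)$.

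I expect the main obstacle to be the top filtration. In the truncated spectral sequence nothing of higher filtration can hit the top $\sigma$-shifted generator, so the $\Cmr$ description for that summand is not automatic. I would first pin down the exact filtration range of $\p(n\rho)$ in \eqref{ss:HZPnrho}: the cell $\sfb_{m\rho}$ lives in filtration $m$, and $\p(n\rho)$ has its top class in filtration $n-1$, of degree $(n-1)\rho+\sigma = n\rho-1$, which is the dimension of the manifold. I would then check the top summand against the underlying group $\Hmr\zz_{2n-1}(\r\p^{2n-1})\cong\zz$ and against the $\Hmr\ull{\ff}_2$ computation. The expectation is that the top class $\sfb_{(n-1)\rho+\sigma}$ is a permanent cycle that is not a boundary, so it contributes a free summand $\Hmr\ull{\zz}_\star\langle\sfb_{(n-1)\rho+\sigma}\rangle$ (an equivariant fundamental class) rather than a copy of $\Cmr$. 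If so, the indexing of $\Cmr$ and $\Kmr$ in the displayed formula must be read with this top-class convention.
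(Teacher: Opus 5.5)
Your truncation strategy is what the paper intends; it gives no argument beyond ``immediate''. Your worry about the top filtration is correct, and you should state it as a correction rather than as a reading convention.

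The filtration $\p(\rho)\subset\cdots\subset\p(n\rho)$ has stages $0,\dots,n-1$, so the printed range $m=0,\dots,n$ is off by one. The top generator $\sfb_{(n-1)\rho+\sigma}$ supports no differentials, for the same vanishing-region reason as the other $\sigma$-classes, and nothing can hit it. So it carries a free summand. The case $n=1$ already settles this: $\p(\rho)\cong S^\sigma$ has homology free on $\sfb_0,\sfb_\sigma$, which contradicts the displayed $\Cmr$- and $\Kmr$-terms. Degreewise, the correct answer is
\[
\Hmr\ull{\zz}_\star\langle\sfb_0\rangle\oplus\Cmr\langle\sfb_{m\rho+\sigma}\rangle_{m=0}^{n-2}\oplus\Kmr\langle\sfb_{m\rho}\rangle_{m=1}^{n-1}\oplus\Hmr\ull{\zz}_\star\langle\sfb_{(n-1)\rho+\sigma}\rangle .
\]

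The genuine gap is the extension step, which you import from \Cref{thm:HZHomologyBC2Sigma2}.

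\textbf{$\Kmr$ is not projective.} Suppose $\Kmr$ were a summand of a free module $F$. The coordinates of the image of $a_\sigma$ in $F$ would be $2$-torsion, so they would lie in $\Kmr$. Projecting back would then give $a_\sigma\in\Kmr\cdot\Kmr$, but $\Kmr\cdot\Kmr$ vanishes in degree $-\sigma$.

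\textbf{The module claim itself is false for $n\geq2$.} Work with $\Hmr\ull{\zz}$-module cells. Your $\mathrm{d}_1$ says the $\sfb_{(m+1)\rho}$-cell is attached to the $\sfb_{m\rho+\sigma}$-cell by $2$. Every other attaching map lies in some $\pi_{a+b\sigma}$ of $\Hmr\ull{\zz}$ or $\Hmr\ull{\ff}_2$ with $a\geq 0$ and $b>0$, and these groups vanish. Hence, compatibly with the filtration,
\[
\Hmr\ull{\zz}\wedge\p(n\rho)_+\simeq\Hmr\ull{\zz}\vee\bigvee_{m=0}^{n-2}\Sigma^{m\rho+\sigma}\Hmr\ull{\ff}_2\vee\Sigma^{(n-1)\rho+\sigma}\Hmr\ull{\zz}.
\]
This splitting also gives the collapse at $\mathrm{E}_2$ directly. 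Note that in your version, detecting a source in the collapsing $\Hmr\ull{\ff}_2$-spectral sequence only shows it is not a boundary; showing it is a cycle would need injectivity on the possible targets.

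It follows that each pair $\Cmr\langle\sfb_{m\rho+\sigma}\rangle$, $\Kmr\langle\sfb_{(m+1)\rho}\rangle$ is the associated graded of the short exact sequence $0\to\Cmr\to\Hmr\ull{\ff}_{2\star}\to\Sigma\Kmr\to0$. This sequence does not split over $\Hmr\ull{\zz}_\star$:
\begin{itemize}
\item $u_\sigma\in\Hmr\ull{\ff}_{2,1-\sigma}$ lifts $a_\sigma\in\Kmr$.
\item Let $\theta\in\Hmr\ull{\ff}_{2,-2+2\sigma}$ be the image of $\theta_2$. Then $(\theta_2/u_{2\sigma})\cdot u_\sigma=\theta/u_\sigma\neq0$, and this is the image of $\theta_3\in\Cmr$.
\item By contrast, $(\theta_2/u_{2\sigma})\cdot a_\sigma=0$ in $\Hmr\ull{\zz}_\star$.
\end{itemize}
Concretely, the degree-$1$ class detected by $a_\sigma\sfb_\rho$ has $\theta_2/u_{2\sigma}$-multiple $\theta_3\sfb_\sigma$. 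In the displayed module, $\theta_2/u_{2\sigma}$ kills all of degree $1$, which is spanned by $a_\sigma\sfb_\rho$ and $a_\sigma^2\sfb_{\rho+\sigma}$.

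So the decomposition above holds only as $\mathrm{RO}(\Cmr_2)$-graded groups. As $\Hmr\ull{\zz}_\star$-modules, the $\Cmr$- and $\Kmr$-terms glue into shifted copies of $\Hmr\ull{\ff}_{2\star}$.
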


\subsection{The Pontryagin Ring Structure} \

As the space $\Bmr_{\Cmr_2}\Sigma_2$ is a $\Cmr_2$-equivariant $\Hmr$-space \cite{Alaska}, its $\Hmr\ull{\zz}$-homology groups form a Pontryagin ring. In this section, we compute this ring structure by first determining the ring structure on its $\Hmr\ull{\ff}_2$-homology.

Hu and Kriz \cite{HK01} computed
\begin{equation}\label{eqn:F2CohomologyBC2Sigma2}
    (\Hmr\underline{\f}_2)^{\star}(\Bmr_{\Cmr_2} \Sigma_2) = \frac{\Hmr \underline{\f}_2^{\star} \llbracket \sfe_{\Hmr\ull{\ff}_2} \rrbracket [y]}{y^2 = a_{\sigma} y+ u_{\sigma} \sfe},
\end{equation}
which is a free $\Hmr\ull{\ff}_2^\star$-module. Here the class $\sfe_{\Hmr\ull{\ff}_2}$ is an $\Hmr\underline{\f}_2$-Euler class of the $\rho$-dimensional $\Cmr_2$-equivariant vector bundle $\xi$ used in studying the filtration \eqref{filtration:p(n rho)}. After dualizing, we obtain
\begin{equation} \label{eqn: HF2(BC2C2) ring str}
    (\Hmr\underline{\f}_2)_{\star}(\Bmr_{\Cmr_2}\Sigma_{2+}) = \Hmr\f_2 \langle \sfb_{m \rho}, \sfb_{m \rho + \sigma}\rangle_{m \geq 0},
\end{equation}
where $\sfb_{m\rho} = (\sfe_{\Hmr\ull{\ff}_2}^n)^{\vee}$ and $\sfb_{m \rho + \sigma} = (\sfe_{\Hmr\ull{\ff}_2}^n \cdot y)^{\vee}$. For degree reasons, one immediately observes 
\[
\sfb_{n\rho + \sigma}\cdot \sfb_{m\rho + \sigma} = 0.
\]

\begin{remark}
    This is perhaps surprising because the restrictions of these classes to underlying are $\sfb_{2n + 1}$ and $\sfb_{2m+1}$, respectively. If $n\neq m$, the product of the underlying classes is nonzero. This does not lead to a contradiction because the restriction map in Bredon {\it homology} is not, in general, a ring homomorphism. In particular, this means that one cannot immediately deduce the product structure from the underlying product structure. Nevertheless, we can still imitate the classical arguments. 
\end{remark}
By comparison with the underlying, it follows that
\[
\Delta(\sfe_{\Hmr\ull{\ff}_2}) = \sfe_{\Hmr\ull{\ff}_2}\otimes 1 + 1\otimes \sfe_{\Hmr\ull{\ff}_2}\in \Hmr\ull{\ff}_2^\star(\Bmr_{\Cmr_2}\Sigma_{2+})^{\otimes 2},
\]
where $\Delta$ is the comultiplication induced by the multiplication $\Bmr_{\Cmr_2}\Sigma_2\times \Bmr_{\Cmr_2}\Sigma_2\to \Bmr_{\Cmr_2}\Sigma_2$.
Consequently, $\sfb_{n\rho}\cdot \sfb_{m\rho} = (1-\delta_m^n)\sfb_{(n+m)\rho}$, where $\delta_m^n$ is the Kronecker delta. It remains to study $\sfb_{m\rho + \sigma}\cdot \sfb_{n\rho}$. 

\begin{lemma}
    For any $n\geq 0$, $\sfb_{\sigma} \cdot \sfb_{n\rho} = \sfb_{n\rho + \sigma}$. 
\end{lemma}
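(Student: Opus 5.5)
The plan is to dualize: compute the coproduct $\Delta(y)$ in the Hu--Kriz description \eqref{eqn:F2CohomologyBC2Sigma2}, and then read off $\sfb_\sigma\cdot\sfb_{n\rho}$ by pairing. Since $\Hmr\ull{\ff}_2^\star(\Bmr_{\Cmr_2}\Sigma_{2+})$ is free over $\Hmr\ull{\ff}_2^\star$ on the classes $\sfe^k$ and $\sfe^k y$, a Künneth isomorphism holds for the product $\Bmr_{\Cmr_2}\Sigma_2\times\Bmr_{\Cmr_2}\Sigma_2$. The Pontryagin product is therefore dual to $\Delta$. Concretely, $\sfb_\sigma\cdot\sfb_{n\rho}$ is the element whose pairing with a basis class $z$ equals the coefficient of $y\otimes \sfe^n$ in $\Delta(z)$ (with the evident signs, trivial mod 2).

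First, determine $\Delta(y)$. Its degree is $|y| = \sigma$, and the possible terms in $\Hmr\ull{\ff}_2^\star\otimes$(basis) are $y\otimes1$, $1\otimes y$, $a_\sigma(1\otimes1)$, and terms involving $\sfe$ with negative-cone coefficients. The latter can be excluded because of degree: the coefficient ring has nothing in degree $\sigma-\rho$. Restricting to the underlying space, $y$ maps to the generator $x\in H^1(\Bmr\Sigma_2;\ff_2)$, which is primitive. This forces the coefficients of $y\otimes1$ and $1\otimes y$ to be $1$. To rule out an $a_\sigma$-term, restrict along the basepoint inclusion $*\times \Bmr_{\Cmr_2}\Sigma_2$, using the counit property and the fact that $y$ restricts to $0$ at the basepoint; this requires choosing $y$ as the reduced class. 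We expect $\Delta(y)=y\otimes1+1\otimes y$.

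Next, use multiplicativity of $\Delta$ together with $\Delta(\sfe)=\sfe\otimes1+1\otimes\sfe$. This gives
\[
\Delta(\sfe^k y)=\sum_{i}\binom{k}{i}\bigl(\sfe^i y\otimes \sfe^{k-i}+\sfe^{i}\otimes \sfe^{k-i}y\bigr).
\]
The coefficient of $y\otimes\sfe^n$ in $\Delta(z)$ is $1$ exactly when $z=\sfe^n y$ (take $k=n$, $i=0$). It vanishes for $z=\sfe^k$, since those coproducts never contain $y$. Note that the relation $y^2=a_\sigma y+u_\sigma\sfe$ is never invoked here, because we only apply $\Delta$ to monomials linear in $y$. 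Dualizing, $\sfb_\sigma\cdot\sfb_{n\rho}=(\sfe^n y)^\vee=\sfb_{n\rho+\sigma}$, first in $\Hmr\ull{\ff}_2$-homology. The $\Hmr\ull{\zz}$ statement then follows by mapping along $\Hmr\ull{\zz}\to\Hmr\ull{\ff}_2$. The relevant groups in degree $n\rho+\sigma$ are the $\Cmr$-summand, whose generator maps injectively, and the classes correspond under the comparison of the spectral sequences.

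The main obstacle is making $\Delta(y)$ rigorous, specifically excluding the $a_\sigma\,1\otimes1$ term and any contributions with $\sfe$ and negative-cone coefficients. Degree bookkeeping in $RO(\Cmr_2)$-grading should dispose of the latter. For the former, I would use the counit/basepoint argument, which needs a normalization of $y$ compatible with the Hu--Kriz relation. If that normalization fails, I would instead restrict to $\Cmr_2$-fixed points to pin down the coefficient.
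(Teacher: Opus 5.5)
Your proof is correct. Its second half, expanding $\Delta^*(\sfe^k y)$ with $\Delta^*(\sfe)=\sfe\otimes1+1\otimes\sfe$ and reading off the coefficient of $y\otimes\sfe^n$, is the paper's pairing computation. The difference is how you obtain $\Delta^*(y)$.

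The paper uses $\beta(y)=\sfe$ for the Bockstein of $\Hmr\ull{\ff}_2\to\Hmr\ull{\zz/4}\to\Hmr\ull{\ff}_2$, and transports primitivity from $\sfe$ to $y$ by naturality of $\beta$. You use degree bookkeeping instead, and it does work. The coefficient of $\sfe^iy^{\epsilon}\otimes\sfe^jy^{\epsilon'}$ lies in $\Hmr\ull{\ff}_2^{-k+(1-k-e)\sigma}$, with $k=i+j$ and $e=\epsilon+\epsilon'$. This group vanishes unless $k=0$ and $e\leq1$. So $\Delta^*(y)=c_1\,y\otimes1+c_2\,1\otimes y+c_3\,a_\sigma(1\otimes1)$, and restricting to the underlying space, where $y\mapsto x$ is primitive, gives $c_1=c_2=1$.

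Your route avoids the $\zz/4$-Bockstein and its compatibility with external products. The paper's argument ties $y$ to $\sfe$ more conceptually, but it does no better on the $a_\sigma$ term, since $\beta(a_\sigma)\in\Hmr\ull{\ff}_2^{\rho}=0$.

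In fact the step you flag as the main obstacle is not needed. A term $c_3a_\sigma(1\otimes1)$ only adds $c_3a_\sigma\Delta^*(\sfe)^k$ to $\Delta^*(\sfe^ky)$. That is a combination of the classes $\sfe^i\otimes\sfe^{k-i}$, and these pair to zero with $\sfb_\sigma\otimes\sfb_{n\rho}$. So the lemma holds whatever $c_3$ is. If you want $y$ primitive anyway, your counit argument gives it, and the normalization costs nothing: $y\mapsto y+a_\sigma$ preserves $y^2=a_\sigma y+u_\sigma\sfe$.

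Drop the final passage to $\Hmr\ull{\zz}$. The lemma is a statement in $\Hmr\ull{\ff}_2$-homology. For $n\geq1$ the class $\sfb_{n\rho}$ has no integral counterpart: the summand $\Kmr\langle\sfb_{n\rho}\rangle$ of \Cref{thm:HZHomologyBC2Sigma2} is zero in degree $n\rho$ because $\Kmr_0=0$. That is why the integral product formula in \Cref{thm:HZHomologyProductStructure} carries coefficients $\alpha\in\Kmr$.
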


\begin{proof}
    The class $y$ can be chosen such that $\beta(y) = \sfe_{\Hmr\ull{\ff}_2}$, where $\beta$ is connecting homomorphism for the cofiber sequence
    \[
        \begin{tikzcd}
            \Hmr\ull{\ff}_2 \ar[r, ""] & \Hmr\ull{\ff}_4 \ar[r, ""] & \Hmr\ull{\ff}_2
        \end{tikzcd}
    \]
    By the naturality of the Bockstein, 
    \begin{align*}
        \begin{split}
            \beta\circ \Delta^*(y) &= \Delta^*\circ \beta(y)
                = \Delta^*(\sfe_{\Hmr\ull{\ff}_2})
                = \sfe_{\Hmr\ull{\ff}_2}\otimes 1 + 1\otimes \sfe_{\Hmr\ull{\ff}_2}.
        \end{split}
    \end{align*}
    For degree reasons, it follows that $\Delta^*(y) = y\otimes 1 + 1\otimes y$. Thus
    \begin{align*}
        \begin{split}
            (\sfb_{\sigma}\cdot \sfb_{n\rho})(\sfe_{\Hmr\ull{\ff}_2}^n\cdot y) &= (\sfb_\sigma\otimes \sfb_{n\rho})(\Delta^*(\sfe_{\Hmr\ull{\ff}_2}^n)\Delta^*(y))\\
            &= (\sfb_\sigma\otimes \sfb_{n\rho})((\sfe_{\Hmr\ull{\ff}_2}\otimes 1 + 1\otimes \sfe_{\Hmr\ull{\ff}_2})^n(y\otimes 1 + 1\otimes y))
            = 1.
        \end{split}
    \end{align*}
\end{proof}

\begin{corollary}\label{cor:HF2HomologyProductStructure}
    The ring structure of \eqref{eqn: HF2(BC2C2) ring str} is determined by the following relations:
    \begin{itemize}
        \item $\sfb_{n \rho+ \sigma} \cdot \sfb_{m \rho + \sigma} = 0$, for $m, n \geq 0$;
        \item $\sfb_{n \rho} \cdot \sfb_{m \rho} = (1-\delta_n^m) \sfb_{(m+n) \rho}$, where $\delta^m_n$ is the Kronecker delta symbol;
        \item $\sfb_{n\rho} \cdot \sfb_{\sigma} = \sfb_{n\rho + \sigma}$.
    \end{itemize}
\end{corollary}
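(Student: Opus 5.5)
The plan is to read off the three relations by dualizing the coalgebra structure on $(\Hmr\ull{\ff}_2)^\star(\Bmr_{\Cmr_2}\Sigma_{2+})$. Since \eqref{eqn:F2CohomologyBC2Sigma2} is free over $\Hmr\ull{\ff}_2^\star$ with basis $\{\sfe^n, \sfe^n y\}_{n\geq 0}$ (writing $\sfe=\sfe_{\Hmr\ull{\ff}_2}$), there is a Künneth isomorphism for $\Bmr_{\Cmr_2}\Sigma_2\times\Bmr_{\Cmr_2}\Sigma_2$. Hence the Pontryagin product on the dual basis $\sfb_{n\rho}, \sfb_{n\rho+\sigma}$ is determined by pairing $\sfb\otimes\sfb'$ against $\Delta^*$ of each basis element. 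We have already shown $\Delta^*(\sfe)=\sfe\otimes1+1\otimes\sfe$ and $\Delta^*(y)=y\otimes1+1\otimes y$, and $\Delta^*$ is a ring map. So everything reduces to expanding $\Delta^*(\sfe^k)$ and $\Delta^*(\sfe^k y)$ and reducing products via the relation $y^2=a_\sigma y+u_\sigma\sfe$ only where needed.

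For the first relation I would simply quote the degree argument already given. The class $\sfb_{n\rho+\sigma}\cdot\sfb_{m\rho+\sigma}$ lives in degree $(n+m+1)\rho+2\sigma-1$ (rather, $(n+m)\rho+2\sigma$). Pairing against the basis, a nonzero coefficient would have to lie in $\Hmr\ull{\ff}_{2\star}$ in degree $2\sigma-\rho=\sigma-1$ or $2\sigma-\rho-\sigma=-1$, and both groups vanish. Alternatively, in $\Delta^*(\sfe^k y)$ and $\Delta^*(\sfe^k)$ no term of the form $\sfe^iy\otimes\sfe^jy$ appears, so the pairing is zero.

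For the second relation, expand $\Delta^*(\sfe^{k})=\sum_i\binom{k}{i}\sfe^i\otimes\sfe^{k-i}$. Pairing with $\sfb_{n\rho}\otimes\sfb_{m\rho}$ gives $\binom{n+m}{n}$ mod $2$ against $\sfe^{n+m}$. Pairing against $\sfe^k y$ gives zero, since each term carries a $y$ on one side. This yields $\sfb_{n\rho}\sfb_{m\rho}=\binom{n+m}{n}\sfb_{(n+m)\rho}$. Here I expect the main obstacle: the stated $(1-\delta_n^m)$ must be reconciled with this binomial coefficient mod $2$. The classical answer is the binomial coefficient (divided-power structure), so I would compute carefully, compare with the underlying $\Hmr\ff_2$-Pontryagin ring of $\Bmr\Sigma_2$ via restriction on the positive-cone classes, and check in particular $n=m$, where $\binom{2n}{n}$ is even.

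For the third relation, the preceding lemma gives $\sfb_\sigma\sfb_{n\rho}=\sfb_{n\rho+\sigma}$; by commutativity of the Pontryagin product this is the same as $\sfb_{n\rho}\sfb_\sigma$. The only remaining point is to check that the pairing of $\sfb_\sigma\otimes\sfb_{n\rho}$ with other basis elements vanishes. This follows from the same expansion: only the term $y\otimes\sfe^n$ can contribute. Finally, these products generate everything via $\Hmr\ull{\ff}_{2\star}$-bilinearity, which completes the determination of the ring structure.
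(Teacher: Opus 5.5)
Your route is the paper's: dualize the coproduct on the free $\Hmr\ull{\ff}_2^\star$-module with basis $\sfe^k,\sfe^ky$, using primitivity of $\sfe$ and $y$. You handle the first relation by degrees or pairing, and the third by the Bockstein lemma. Those two parts are fine. Keep your ``no $\sfe^iy\otimes\sfe^jy$ term'' argument for the first relation. Your degree count, as written, checks only two of the relevant coefficient groups, although all of them (degrees $j\rho+2\sigma$ and $j\rho+\sigma$) do vanish.

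The step you flag as the main obstacle is a genuine gap, and it cannot be closed. Your computation $\sfb_{n\rho}\cdot\sfb_{m\rho}=\binom{n+m}{n}\sfb_{(n+m)\rho}$ is correct, and the second bullet of the statement is false.

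Since $\sfe$ is primitive, the span of the $\sfb_{k\rho}$ is a divided power algebra, exactly as classically. By Lucas' theorem, $\binom{n+m}{n}$ is odd exactly when $n$ and $m$ have disjoint binary digits, not when $n\neq m$. Concretely, $\sfb_\rho\cdot\sfb_{3\rho}=4\,\sfb_{4\rho}=0$, while the statement asserts $\sfb_{4\rho}$.

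In fact the stated relations contradict associativity of the Pontryagin product. They give $(\sfb_\rho\sfb_\rho)\sfb_{2\rho}=0$, but $\sfb_\rho(\sfb_\rho\sfb_{2\rho})=\sfb_\rho\sfb_{3\rho}=\sfb_{4\rho}\neq 0$. Read literally at $n=m=0$, they even give $\sfb_0\cdot\sfb_0=0$ for the unit.

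The paper passes from $\Delta(\sfe)=\sfe\otimes 1+1\otimes\sfe$ to $(1-\delta_n^m)$ with a bare ``consequently'', and that is exactly where it goes wrong. Your proposed comparison with $\Bmr\Sigma_2$ cannot rescue the formula either, since $H_*(\Bmr\Sigma_2;\ff_2)$ is itself a divided power algebra ($b_2b_6=\binom{8}{2}b_8=0$).

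What your argument actually proves, and what should replace the second bullet, is $\sfb_{n\rho}\sfb_{m\rho}=\binom{n+m}{n}\sfb_{(n+m)\rho}$. Combined with the other two bullets, this gives $\sfb_{n\rho}\sfb_{m\rho+\sigma}=\binom{n+m}{n}\sfb_{(n+m)\rho+\sigma}$. The same binomial coefficients need to be inserted into the integral and $\text{k}\r$ product formulas that the paper derives from this corollary.
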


Nonequivariantly, the ring structure on the $\Hmr\zz$-homology of $\Bmr\Sigma_2$ is trivial for degree reasons. Equivariantly, this is not the case due to the presence of the $\Kmr$-factors in \Cref{thm:HZHomologyBC2Sigma2}. 

\begin{thm}\label{thm:HZHomologyProductStructure}
    The isomorphism in \Cref{thm:HZHomologyBC2Sigma2} is a ring homomorphism where
    \begin{itemize}
        \item $\sfb_{n \rho+ \sigma} \cdot \sfb_{m \rho + \sigma} = 0$, for $m, n \geq 0$;
        \item $(\alpha \sfb_{n \rho}) \cdot (\beta\sfb_{m \rho}) = (1-\delta_n^m) \alpha\beta\cdot \sfb_{(m+n) \rho}$, where $\delta^m_n$ is the Kronecker delta, and $\alpha,\beta\in \Kmr$;
        \item $(\alpha\cdot\sfb_{n\rho}) \cdot \sfb_{m\rho + \sigma} = \alpha\cdot\sfb_{(n+m)\rho + \sigma}$, where $\alpha\in \Kmr$.
    \end{itemize}
\end{thm}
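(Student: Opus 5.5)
The plan is to deduce the integral product structure from the mod $2$ structure in \Cref{cor:HF2HomologyProductStructure}. The tool is the reduction map $r\colon \Hmr\ull{\zz}_\star(\Bmr_{\Cmr_2}\Sigma_{2+})\to \Hmr\ull{\ff}_{2\star}(\Bmr_{\Cmr_2}\Sigma_{2+})$ induced by the ring map $\Hmr\ull{\zz}\to\Hmr\ull{\ff}_2$. The Pontryagin product is induced by the $\Hmr$-space multiplication $\mu\colon \Bmr_{\Cmr_2}\Sigma_2\times \Bmr_{\Cmr_2}\Sigma_2\to \Bmr_{\Cmr_2}\Sigma_2$ together with the Künneth-type external product $\Hmr\ull{\zz}_\star(X)\otimes\Hmr\ull{\zz}_\star(Y)\to \Hmr\ull{\zz}_\star(X\times Y)$. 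Since $\Hmr\ull{\zz}\to \Hmr\ull{\ff}_2$ is a ring map, $r$ is a ring homomorphism, and it is compatible with the module structures along $\Hmr\ull{\zz}_\star\to\Hmr\ull{\ff}_{2\star}$.

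First I will identify $r$ on generators by comparing the filtration spectral sequences \eqref{eqn:HZSS} and its $\Hmr\ull{\ff}_2$ analogue, which collapses at $\Emr_1$. On associated graded, $r$ sends $\sfb_{m\rho}\mapsto \sfb_{m\rho}$ and $\sfb_{m\rho+\sigma}\mapsto\sfb_{m\rho+\sigma}$, and it acts on coefficients by the reduction $\Hmr\ull{\zz}_\star\to\Hmr\ull{\ff}_{2\star}$. The key injectivity statements are the following.
\begin{itemize}
\item On the $\Kmr\langle\sfb_{m\rho}\rangle$ summands, $r$ is injective, since $\Kmr\to\Hmr\ull{\ff}_{2\star}$ is injective (as used in \Cref{lem: dr-differential in HZSS}).
\item On the $\Cmr\langle \sfb_{m\rho+\sigma}\rangle$ summands, $\Cmr=\Hmr\ull{\zz}_\star/2\to \Hmr\ull{\ff}_{2\star}$ is injective, which I will check from the long exact sequence of $\Hmr\ull{\zz}\xrightarrow{2}\Hmr\ull{\zz}\to\Hmr\ull{\ff}_2$.
\end{itemize}
Because the relevant filtration pieces map injectively and there are no hidden extensions, $r$ restricted to $\Cmr\langle\sfb_{m\rho+\sigma}\rangle\oplus\Kmr\langle\sfb_{m\rho}\rangle_{m\ge1}$ is injective. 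This is the step I expect to be the main obstacle: one must control filtration jumps, i.e.\ verify that a class in a given filtration cannot reduce to something in lower filtration modulo $2$. I would handle this by a filtration-by-filtration argument using the five lemma on the short exact sequences of filtration quotients.

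Given injectivity, each product identity is checked after applying $r$. For $(\alpha\sfb_{n\rho})(\beta\sfb_{m\rho})$, we have $r$ of this product equal to $\bar\alpha\bar\beta\,\sfb_{n\rho}\sfb_{m\rho}=(1-\delta^m_n)\bar\alpha\bar\beta\,\sfb_{(n+m)\rho}$ by \Cref{cor:HF2HomologyProductStructure}. This equals $r\big((1-\delta^m_n)\alpha\beta\,\sfb_{(n+m)\rho}\big)$, where $\alpha\beta\in\Kmr$ because $\Kmr$ is an ideal. The identity $(\alpha\sfb_{n\rho})\sfb_{m\rho+\sigma}=\alpha\sfb_{(n+m)\rho+\sigma}$ follows the same way from $\sfb_{n\rho}\sfb_\sigma=\sfb_{n\rho+\sigma}$ and associativity, since $\sfb_{n\rho}\sfb_{m\rho+\sigma}=\sfb_{n\rho}\sfb_{m\rho}\sfb_\sigma$. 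For $\sfb_{n\rho+\sigma}\sfb_{m\rho+\sigma}$, the target degree $(n+m+1)\rho+\sigma$ plus $\sigma$ lies in a degree where \Cref{thm:HZHomologyBC2Sigma2} gives only $\Cmr$- and $\Kmr$-multiples that inject under $r$, so the product vanishes because its reduction is $0$. Alternatively, it vanishes directly for degree reasons, as in the mod $2$ case.

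One caveat remains. Where a mod $2$ product lands only in $\Cmr$-terms, injectivity there also suffices. Where $\delta^m_n$ terms appear, the integral formula is consistent with the mod $2$ one because elements of $\Kmr$ are $2$-torsion.
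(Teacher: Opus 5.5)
Your strategy matches the paper's: reduce along $r$, use that $r$ is injective off the $\Hmr\ull{\zz}_\star\langle\sfb_0\rangle$-summand, and import \Cref{cor:HF2HomologyProductStructure}. The paper gets injectivity directly from the long exact sequence of $\Hmr\ull{\zz}\xrightarrow{2}\Hmr\ull{\zz}\to\Hmr\ull{\ff}_2$: $\ker r$ is twice integral homology, which lies in the $\sfb_0$-summand. Your associated-graded route also works. The filtration jump you flag is not an obstacle, because if a nonzero $x$ has leading term in filtration $p$, injectivity on $\mathrm{gr}_p$ already forces $r(x)\notin F_{p-1}$.

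The real problem is your step for the third relation. Feeding associativity the Corollary's relations gives
\[
\sfb_{n\rho}\sfb_{m\rho+\sigma}=(\sfb_{n\rho}\sfb_{m\rho})\sfb_\sigma=(1-\delta^m_n)\,\sfb_{(n+m)\rho+\sigma},
\]
so for $n=m\geq 1$ your argument refutes the stated formula rather than proving it. For example, $r\big((a_\sigma\sfb_\rho)\cdot\sfb_{\rho+\sigma}\big)=0$ for any integral lift of $a_\sigma\sfb_\rho$. Degree $2+2\sigma$ meets no $\sfb_0$-summand, so $(a_\sigma\sfb_\rho)\cdot\sfb_{\rho+\sigma}=0$. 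But $a_\sigma\sfb_{2\rho+\sigma}$ is the nonzero class of the $\Cmr\langle\sfb_{2\rho+\sigma}\rangle$-summand in that degree. The paper's one-line proof only gets the case $m=0$ from the Corollary and does not address this.

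Note also that primitivity of $\sfe_{\Hmr\ull{\ff}_2}$ actually gives the divided-power rules $\sfb_{n\rho}\sfb_{m\rho}=\binom{n+m}{n}\sfb_{(n+m)\rho}$ and $\sfb_{n\rho}\sfb_{m\rho+\sigma}=\binom{n+m}{n}\sfb_{(n+m)\rho+\sigma}$, as for $H_*(\RP^\infty;\ff_2)$. These already disagree with the Kronecker-delta formula at $(n,m)=(1,3)$, and at $(0,0)$, where $\sfb_0$ is the unit. So the mod $2$ input you import needs binomial coefficients, and so do the second and third relations.

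A second gap is shared with the paper. You compute $r$ of products as if $r(\alpha\sfb_{n\rho})=\bar\alpha\sfb_{n\rho}$ held on the nose. You only established this on associated graded, and it is false. Since $\partial(\sfb_\rho)=\sfb_\sigma$ (compare $\partial b_2=b_1$ underlying) and $\partial$ is $\Hmr\ull{\zz}_\star$-linear, we get $\partial(a_\sigma\sfb_\rho)=a_\sigma\sfb_\sigma\neq0$ in $\Hmr\ull{\zz}_0(\Bmr_{\Cmr_2}\Sigma_{2+})$. So no integral class reduces to $a_\sigma\sfb_\rho$. Using $\partial(u_\sigma)=a_\sigma$, the class detected by $a_\sigma\sfb_\rho$ reduces to $a_\sigma\sfb_\rho+u_\sigma\sfb_\sigma$ modulo $a_\sigma^2\sfb_{\rho+\sigma}$. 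Likewise, every lift of $a_\sigma\sfb_{n\rho}$ carries the forced correction $u_\sigma\sfb_{(n-1)\rho+\sigma}$.

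Checking identities after $r$ therefore proves them only modulo lower filtration, i.e.\ on $\Emr_\infty$, unless you fix lifts and track these terms. One fix is to take the lift $\partial(u_\sigma\sfb_{n\rho})$. Its reduction is exactly $a_\sigma\sfb_{n\rho}+u_\sigma\sfb_{(n-1)\rho+\sigma}$, since the mod $2$ Bockstein is a derivation. Pascal's rule then gives $(a_\sigma\sfb_{n\rho})(a_\sigma\sfb_{m\rho})=\binom{n+m}{n}\,a_\sigma\cdot(a_\sigma\sfb_{(n+m)\rho})$ on the nose.

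Finally, $\sfb_{n\rho+\sigma}\sfb_{m\rho+\sigma}$ lives in degree $(n+m)\rho+2\sigma$, not the degree you wrote. Integral homology vanishes there, so the first relation needs no reduction at all.
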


\begin{proof}
    The cofiber sequence 
    \[
        \begin{tikzcd}
            \Hmr\ull{\zz} \ar[r, "2"] & \Hmr\ull{\zz} \ar[r, ""] & \Hmr\ull{\ff}_2
        \end{tikzcd}
    \]
    induces a long exact sequence in the homology of $\Bmr_{\Cmr_2}\Sigma_{2+}$. On all classes except $\Hmr\ull{\zz}_\star$-multiples of $\sfb_0$, the multiplication by $2$ map is trivial. The product of integral classes then follows immediately from \Cref{cor:HF2HomologyProductStructure}. Since $\sfb_0$ is the multiplicative unit, the result follows. 
\end{proof}

\begin{remark}
    The presence of $\alpha$ and $\beta$ as coefficients on the $\sfb_{k\rho}$-classes in \cref{thm:HZHomologyProductStructure} is necessary, since the unit in $\Hmr\ull{\zz}_\star$ does not lie in $\Kmr$.
\end{remark}

\subsection{$\Hmr\ull{\zz}$-cohomology of $\Bmr_{\Cmr_2}\Sigma_2$} \label{section: cohomology of KR cohomology of BC2C2} \

To compute the $\Hmr\ull{\zz}$-cohomology ring of $\Bmr_{\Cmr_2}\Sigma_2$, we can apply $F(-, \Hmr\ull{\zz})$ to filtration \eqref{filtration:p(n rho)} to obtain a spectral sequence 
\begin{equation}
\label{sseq:test}
    \Emr_1 = \Hmr\ull{\zz}^\star\langle\sfb^{m\rho}, \sfb^{m\rho + \sigma}\rangle_{m\geq0} \implies \Hmr\ull{\zz}^\star(\Bmr_{\Cmr_2}\Sigma_{2+}).
\end{equation}
By an argument similar to the one for \Cref{thm:HZHomologyBC2Sigma2}, one can show that the only nonzero differentials are determined by ${\rm{d}}_1({\sfb}^{m\rho + \sigma}) = 2\cdot {\sfb}^{(m+1)\rho}$ and that all extension problems are trivial. 

\begin{thm}\label{thm:HZCohomologyBC2Sigma2}
    As $\Hmr\ull{\zz}^\star$-modules,
    \[
        \Hmr\ull{\zz}^\star(\Bmr_{\Cmr_2}\Sigma_{2+}) \cong \Hmr\ull{\zz}^\star\langle \sfb^0\rangle \oplus \Kmr\langle \sfb^{m\rho + \sigma}\rangle_{m\geq 0} \oplus \Cmr\langle\sfb^{m\rho}\rangle_{m\geq 1},
    \]
    where $\Cmr$ and $\Kmr$ are the cokernel and kernel of mutliplication by $2$ on $\Hmr\ull{\zz}^\star$, respectively.
\end{thm}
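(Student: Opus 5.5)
The plan is to run the cohomological analogue of the argument for \Cref{thm:HZHomologyBC2Sigma2}, using the spectral sequence \eqref{sseq:test} obtained by applying $F(-,\Hmr\ull{\zz})$ to the filtration \eqref{filtration:p(n rho)}. The filtration quotients are $\PP(n\rho)/\PP((n-1)\rho)\cong \Th((n-1)\xi_\rho)$. The fixed $\Hmr\ull{\zz}$-orientation of $\xi_\rho$ gives cohomological Thom isomorphisms, and $\PP(\rho)\cong S^\sigma$. Together these identify the $\Emr_1$-page with the free module $\Hmr\ull{\zz}^\star\langle\sfb^{m\rho},\sfb^{m\rho+\sigma}\rangle_{m\ge0}$. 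The differentials are $\Hmr\ull{\zz}^\star$-linear and now raise filtration.

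\textbf{Step 1: the $\mathrm{d}_1$-differential.} Compare with the underlying spectral sequence for $\Hmr\zz^*(\Bmr\Sigma_{2+})$, where $\mathrm{d}_1(\sfb^{2m+1})=2\sfb^{2m+2}$ and $\mathrm{d}_1(\sfb^{2m})=0$; these are dual to the homological $\mathrm{d}_1$. Restriction $\res^{\Cmr_2}_\sfe$ commutes with differentials. In the relevant bidegree the $\Emr_1$-term is a single $\zz\langle\sfb^{(m+1)\rho}\rangle$, on which restriction is injective, so $\mathrm{d}_1(\sfb^{m\rho+\sigma})=2\sfb^{(m+1)\rho}$. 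For $\sfb^{m\rho}$, the target group in the required degree vanishes, or restricts injectively to zero, so $\mathrm{d}_1(\sfb^{m\rho})=0$. The $\Emr_2$-page is then $\Hmr\ull{\zz}^\star\langle\sfb^0\rangle\oplus\Kmr\langle\sfb^{m\rho+\sigma}\rangle_{m\ge0}\oplus\Cmr\langle\sfb^{m\rho}\rangle_{m\ge1}$, with the roles of $\Kmr$ and $\Cmr$ swapped relative to homology.

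\textbf{Step 2: higher differentials vanish.} Each $\Cmr$-summand is generated as a module by $1\cdot\sfb^{m\rho}$. Since the $\Emr_2$-page is concentrated in the bidegrees of these classes, the vanishing regions of $\Hmr\ull{\zz}^\star$ should kill all $\mathrm{d}_r$ on these generators, and hence on the whole summand. For the $\Kmr$-summands, map to the $\Hmr\ull{\ff}_2$-cohomology spectral sequence. That spectral sequence collapses at $\Emr_1$, because by \eqref{eqn:F2CohomologyBC2Sigma2} the answer is free on the classes $\sfe^n$ and $\sfe^n y$. The composite $\Kmr\to\Hmr\ull{\zz}^\star\to\Hmr\ull{\ff}_2^\star$ is injective (\Cref{fig:K}). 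So any $\mathrm{d}_r(\alpha\sfb^{m\rho+\sigma})$ with $\alpha\in\Kmr$ maps to zero. To conclude it is zero, I would check that the possible targets are detected mod $2$. Those targets lie in $\Cmr$-summands, and $\Cmr=\Hmr\ull{\zz}^\star/2$ embeds in $\Hmr\ull{\ff}_2^\star$ via the Bockstein long exact sequence.

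\textbf{Step 3: extensions.} $\Kmr$ is projective, so the $\Kmr$-pieces split off. For the $\Cmr$-pieces, a potential hidden $2$-extension is ruled out by restricting to the underlying level, where $\Hmr\zz^*(\Bmr\Sigma_2)$ is all $\zz/2$ in positive degrees; multiplication by $a_\sigma$ or $u_{2\sigma}$ is handled the same way. I expect Step 2, together with this extension check, to be the main obstacle. In cohomology the $\Cmr$-summands are the ones that survive in infinitely many filtrations, so hidden extensions of the $\lim^1$/completion type must be excluded. Here I would use the Mittag-Leffler property of the finite stages $\PP(n\rho)$, whose spectral sequences are finite analogues of the corollary to \Cref{thm:HZHomologyBC2Sigma2}.
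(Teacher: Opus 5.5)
Your Steps 1 and 2 follow the paper's route; the paper's proof is only the remark that the homology argument can be repeated. Step 2 is sound. In fact, your version is the correct way to run the mod-$2$ comparison: you detect the \emph{targets} of $\mathrm{d}_r$ mod $2$, using that both $\Cmr=\Hmr\ull{\zz}^\star/2$ and $\Kmr$ inject into $\Hmr\ull{\ff}_2^\star$. Note that targets can also lie in $\Kmr$-summands, not only $\Cmr$-summands, but these are detected mod $2$ as well.

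\textbf{The gap is Step 3, and it cannot be repaired.}

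First, $\Kmr$ is not projective. It is cyclic: $\Kmr=a_\sigma\cdot\Hmr\ull{\zz}^\star$, since every $a_\sigma^k u_{2\sigma}^i$ with $k\ge1$ and every $\theta_3/(u_{2\sigma}^ja_\sigma^k)$ is an $a_\sigma$-multiple. It is also nonzero and $2$-torsion. A shifted copy of $\Hmr\ull{\zz}^\star$ is indecomposable, since its degree-zero endomorphisms form $\zz$. Hence the surjection $1\mapsto a_\sigma$ onto $\Kmr$ cannot split. The paper's homology proof makes the same projectivity claim, so this gap is inherited from it.

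Second, restriction to the underlying level is blind to the relevant extensions, because $a_\sigma$ and $\theta_3$ restrict to $0$. Your plan also never considers multiplication by negative-cone elements such as $\theta_2/u_{2\sigma}$, and the hidden extensions there are real:
\begin{itemize}
\item As $\Hmr\ull{\zz}$-modules, the cells $m\rho+\sigma$ and $(m+1)\rho$ are joined by the degree-$2$ map that gives your $\mathrm{d}_1$.
\item Every other attaching map lands in some $\pi_{a+b\sigma}$ of $\Hmr\ull{\zz}$ or $\Hmr\ull{\ff}_2$ with $a\ge0$ and $b\ge1$, and these groups vanish.
\item Hence $\Hmr\ull{\zz}\wedge\Bmr_{\Cmr_2}\Sigma_{2+}\simeq\Hmr\ull{\zz}\vee\bigvee_{m\ge0}\Sigma^{m\rho+\sigma}\Hmr\ull{\ff}_2$, and each pair $\Kmr\langle\sfb^{m\rho+\sigma}\rangle$, $\Cmr\langle\sfb^{(m+1)\rho}\rangle$ assembles into a shifted copy of $\Hmr\ull{\ff}_2^\star$.
\item In that copy, the class detected by $a_\sigma\sfb^{m\rho+\sigma}$ is $u_\sigma$, since $\beta(u_\sigma)=a_\sigma$.
\item Moreover $(\theta_2/u_{2\sigma})\cdot u_\sigma=\theta/u_\sigma\neq0$. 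This is the mod-$2$ image of $\theta_3$, so the product is detected by $\theta_3\sfb^{(m+1)\rho}$.
\end{itemize}
By contrast, $(\theta_2/u_{2\sigma})\cdot a_\sigma=0$ in $\Kmr$. In your split module, the degree $m\rho+2\sigma$ part is spanned by $a_\sigma\sfb^{m\rho+\sigma}$ together with $a_\sigma^2\sfb^{m\rho}$ (or $a_\sigma^2\sfb^{0}$ when $m=0$). All of these are annihilated by $\theta_2/u_{2\sigma}$. So the actual module and the split module are not isomorphic as $\Hmr\ull{\zz}^\star$-modules.

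Your Steps 1--2, like the paper's sketch, correctly establish the $\Emr_\infty$-page. That gives a filtration with the stated subquotients, or equivalently an additive isomorphism. They do not, and cannot, give the stated splitting as $\Hmr\ull{\zz}^\star$-modules.
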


\begin{remark}
    The $\Cmr$ and $\Kmr$ appearing in \Cref{thm:HZCohomologyBC2Sigma2} are cohomologically graded versions of those appearing in \Cref{thm:HZHomologyBC2Sigma2}.
\end{remark}

The product structure can be deduced by comparison with \eqref{eqn:F2CohomologyBC2Sigma2}.

\begin{corollary}\label{cor:HZCohomologyProductStructure}
    The ring structure of \Cref{thm:HZCohomologyBC2Sigma2} is determined by the following relations:
    \begin{itemize}
        \item $\sfb^{n\rho}\cdot \sfb^{m\rho} = \sfb^{(n+m)\rho}$;
        \item $\sfb^{n\rho}\cdot \sfb^{m\rho +\sigma} = \sfb^{(n+m)\rho + \sigma}$.
    \end{itemize}
\end{corollary}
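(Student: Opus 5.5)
We propose to deduce the ring structure by comparing with the $\Hmr\ull{\ff}_2$-cohomology ring \eqref{eqn:F2CohomologyBC2Sigma2} along the reduction map $\Hmr\ull{\zz}\to\Hmr\ull{\ff}_2$. This comparison goes through the long exact sequence of the cofiber sequence $\Hmr\ull{\zz}\xrightarrow{2}\Hmr\ull{\zz}\to\Hmr\ull{\ff}_2$, in the same way as in the proof of \Cref{thm:HZHomologyProductStructure}.

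The first step is to identify the generators. The class $\sfb^{\rho}$ is the image under $d_1$-type relations of the integral Euler class of $\xi_\rho$. We take $\sfb^{\rho}$ to be the $\Hmr\ull{\zz}$-Euler class $\sfe$ of the $\Hmr\ull{\zz}$-oriented bundle $\xi_\rho$ from \cite{BZ}. Its reduction mod $2$ is $\sfe_{\Hmr\ull{\ff}_2}$. We take $\sfb^{\sigma}$ to be an integral class whose reduction is $y$; equivalently, it is the integral Bockstein of a suitable class, which is consistent with $\beta(y)=\sfe_{\Hmr\ull{\ff}_2}$. More generally, the filtration spectral sequence \eqref{sseq:test} is multiplicative, since it comes from a filtration by subspaces with cup product compatible with the filtration. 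Its generators $\sfb^{m\rho}$ and $\sfb^{m\rho+\sigma}$ reduce mod $2$ to $\sfe^m_{\Hmr\ull{\ff}_2}$ and $\sfe^m_{\Hmr\ull{\ff}_2}y$ respectively. This holds because the reduction map induces a map of spectral sequences, and on $\Emr_1$ it is the mod-$2$ reduction of Thom classes.

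The second step is to detect the products. Every class in the statement lives in the $\Kmr$- or $\Cmr$-summands, i.e.\ in the $2$-torsion part, apart from $\sfb^0$. The product $\sfb^{n\rho}\cdot\sfb^{m\rho}$ lies in the $\Emr_\infty$-filtration $\geq n+m$, and in degree $(n+m)\rho$. By \Cref{thm:HZCohomologyBC2Sigma2}, the only contribution in that degree and filtration is $\Cmr\langle\sfb^{(n+m)\rho}\rangle$ in degree $0$ of $\Cmr$, which is $\zz/2$ generated by $\sfb^{(n+m)\rho}$, plus higher-filtration pieces. We claim the higher-filtration pieces vanish for degree reasons, using the vanishing regions of $\Hmr\ull{\zz}^\star$. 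Hence it suffices to check that the product is nonzero. We do this by reducing mod $2$: the product maps to $\sfe^{n+m}_{\Hmr\ull{\ff}_2}\neq 0$. The same argument handles $\sfb^{n\rho}\cdot\sfb^{m\rho+\sigma}$, which reduces to $\sfe^{n+m}_{\Hmr\ull{\ff}_2}y$. Here we use that $\Cmr\to\Hmr\ull{\ff}_2^\star$ and $\Kmr\to\Hmr\ull{\ff}_2^\star$ are injective in the relevant degrees, as in the proof of \Cref{lem: dr-differential in HZSS}.

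The third step is to argue that these relations determine the whole ring. The module structure is already known, and every class is an $\Hmr\ull{\zz}^\star$-multiple of some $\sfb^{m\rho}$ or $\sfb^{m\rho+\sigma}$. The product $\sfb^{\sigma}\cdot\sfb^{\sigma}$ is governed by $y^2=a_\sigma y+u_\sigma\sfe$. Because $u_\sigma$ is not integral, the integral lift of this product is expressed via $a_\sigma\sfb^{\sigma}$ and possibly $2$-torsion terms. We expect this product to be determined by the listed relations together with linearity, or to be forced by degree.

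The main obstacle is the detection step. Mod-$2$ reduction is not injective on all of $\Hmr\ull{\zz}^\star(\Bmr_{\Cmr_2}\Sigma_{2+})$, and the $\Emr_\infty$-filtration could hide a product landing in higher filtration. We would first try a degree-by-degree check, using the explicit charts for $\Kmr$ and $\Cmr$ in cohomological grading, to see that in the relevant bidegrees the reduction map is injective. If that fails, we would fall back on restricting to the underlying space, where the classical relation $x^{n}x^{m}=x^{n+m}$ in $\Hmr\zz^*(\Bmr\Sigma_2)$ pins the coefficient.
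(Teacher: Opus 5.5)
Your route, detecting products by reducing mod $2$ into the Hu--Kriz ring \eqref{eqn:F2CohomologyBC2Sigma2}, is the paper's own; its proof is just this comparison. It does handle the first relation. Indeed $\Hmr\ull{\zz}^{k\rho}(\Bmr_{\Cmr_2}\Sigma_{2+})\cong\ff_2$ for $k\geq1$, reduction is injective there, and $\sfb^{k\rho}=\sfe_{\Hmr\ull{\zz}}^{k}\mapsto\sfe^{k}$, where $\sfe=\sfe_{\Hmr\ull{\ff}_2}$.

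\textbf{The gap: no integral class lifts $y$.} The mod-$2$ Bockstein vanishes on reductions of integral classes, in particular on reductions of integral Bocksteins. But $\beta(y)=\sfe\neq0$, so the identity you cite as ``consistent'' is exactly the obstruction. Correspondingly, $\sfb^{m\rho+\sigma}$ exists only on $\Emr_1$. It supports $\mathrm{d}_1(\sfb^{m\rho+\sigma})=2\sfb^{(m+1)\rho}\neq0$ in \eqref{sseq:test}. That is why \Cref{thm:HZCohomologyBC2Sigma2} contains only $\Kmr\langle\sfb^{m\rho+\sigma}\rangle$, and $1\notin\Kmr$.

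Hence, in your second step, there is no cohomology class $\sfb^{n\rho}\cdot\sfb^{m\rho+\sigma}$ reducing to $\sfe^{n+m}y$. The relation must be read as $\sfb^{n\rho}\cdot\alpha\sfb^{m\rho+\sigma}=\alpha\sfb^{(n+m)\rho+\sigma}$ with $\alpha\in\Kmr$, as in \Cref{thm:HZHomologyProductStructure}. Moreover, the reductions of these classes agree with $\bar\alpha\,\sfe^{m}y$ only modulo higher filtration. For example, the reduced class detected by $a_\sigma\sfb^{\sigma}$ reduces to a $\beta$-cycle with leading term $a_\sigma y$. Since $\beta(u_\sigma)=a_\sigma$, this forces it to be $a_\sigma y+u_\sigma\sfe=y^2$.

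\textbf{Repairing the second relation.} The fix stays inside your approach. Reduction induces a map of filtration spectral sequences that is injective on $\Emr_\infty$ away from the summand $\Hmr\ull{\zz}^\star\langle\sfb^0\rangle$. This holds because $\Kmr,\Cmr\to\Hmr\ull{\ff}_2^\star$ are injective and the mod-$2$ spectral sequence collapses at $\Emr_1$. Suppose $x$ is detected by $\alpha\sfb^{m\rho+\sigma}$. Then $\sfb^{n\rho}x$ is a reduced class whose reduction $\sfe^{n}(\bar\alpha\,\sfe^{m}y+\cdots)$ has leading term $\bar\alpha\,\sfe^{n+m}y$ in filtration $n+m$. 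Hence $\sfb^{n\rho}x$ is detected by $\alpha\sfb^{(n+m)\rho+\sigma}$, which is the precise content of the second relation.

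\textbf{Your third step fails.} It cannot be closed by degree or linearity, because products of two $\Kmr$-type classes do not follow from the displayed relations. By the computation above, $(a_\sigma\sfb^{\sigma})^2$ reduces to $y^4=a_\sigma^2y^2+u_\sigma^2\sfe^2$. So it differs from $a_\sigma^2\cdot(a_\sigma\sfb^{\sigma})$ by a class reducing to $u_\sigma^2\sfe^2$, namely the nonzero filtration-$2$ class $u_{2\sigma}\sfb^{2\rho}$. Such products have to be computed directly from $y^2=a_\sigma y+u_\sigma\sfe$.
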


\begin{remark}
    The classes $\sfb^{n\rho}$ admit a geometric interpretation. In degree $n\rho$, the $\Hmr\ull{\zz}$-cohomology of $\Bmr_{\Cmr_2}\Sigma_2$ is isomorphic to $\ff_2$ by \cref{thm:HZCohomologyBC2Sigma2}.
    Let $\sfe_{\Hmr\ull{\zz}}$ denote the $\Hmr\ull{\zz}$-Euler class of $\xi_\rho$ associated to the orienation we fixed at the start of this section. The ring map $\Hmr\ull{\zz} \to \Hmr\ull{\ff}_2$ sends $\sfe_{\Hmr\ull{\zz}}^i$ to $\sfe_{\Hmr\ull{\ff}_2}^i$ for all $i\geq 1$. Consequently $\sfe_{\Hmr\ull{\zz}}^i$ is nonzero and therefore equal to $\sfb^{i\rho}$.
\end{remark}


\section{The Real K-theory of $\text{C}_2$-equivariant classifying space of $\Sigma_2$}
\label{section 3}

Classically, all nontrivial differentials in \eqref{eqn:kuSliceSS}, which computes the $\text{ku}$-homology of $\text{B}\Sigma_2$, can be determined by comparison with the spectral sequence
\begin{equation}
    \Emr_1 = \bigoplus_{m\geq0}\Sigma^{2m}\Hmr\zz_*(S^0) \implies \text{ku}_*(S^0)
\end{equation}
via the map $\Bmr\Sigma_{2+}\to S^0$.  Equivariantly, we study the $\rm{k}\r$-homology of $\Bmr_{\Cmr_2}\Sigma_2$ in a similar manner, by comparing \eqref{eqn:kRSS} with the slice spectral sequence for $\text{k}\mathbb{R}$.

\subsection{Augmented slice spectral sequence} \

Our main technique is a variation of the localized spectral sequence introduced by Meier--Shi--Zeng \cite[Theorem 3.3]{MSZ23}. The construction and convergence follow \emph{mutatis mutandis} from their work. For completeness, we include a brief proof illustrating the strategy. 



\begin{thm} \label{thm: main computational technique for KR}
    Fix a finite group $\Gmr$. Let $\{ \text{P}^{\bullet}\Emr \}$ denote the regular slice tower (see \cite{Ull13}) for a genuine $\Gmr$-equivariant spectrum $\Emr$ and $X$ be a based $\mathrm{G}$-space. The spectral sequence associated to the filtration 
    \[
        \{ \text{Q}^{\bullet} \} \coloneqq \{ X \wedge \text{P}^{\bullet}\Emr \}
    \]
    has signature
    \begin{equation} \label{signature of ASSS}
        \Emr^{s, \Vmr}_2 = \pi_{\Vmr-s}^{\Gmr}(\text{Q}_{|\Vmr|}^{|\Vmr|}),
    \end{equation}
    and converges strongly to $\pi_{\Vmr-s}^{\Gmr}(\Emr \wedge X)$. 
\end{thm}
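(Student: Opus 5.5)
The plan is to follow the strategy of \cite[Theorem 3.3]{MSZ23}, building everything from the fact that $X\wedge -$ preserves cofiber sequences and (filtered) colimits. First, the regular slice tower $\cdots \to \text{P}^{n+1}\Emr \to \text{P}^n\Emr \to \cdots$ comes with cofiber sequences $\text{P}^n_n\Emr \to \text{P}^n\Emr \to \text{P}^{n-1}\Emr$. Smashing with $X$ produces a tower $\{\text{Q}^\bullet\}$ whose layers are $\text{Q}^n_n = X\wedge \text{P}^n_n\Emr$. The exact couple obtained by applying $\pi^{\Gmr}_\star$ to these cofiber sequences gives a spectral sequence. We index it exactly as the slice spectral sequence is indexed (the $E_2$-page convention of \cite{HHR}, where $s$ records stem minus filtration and $|\Vmr|$ records the slice degree). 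With this indexing the $\Emr_2$-term is $\pi^{\Gmr}_{\Vmr-s}(\text{Q}^{|\Vmr|}_{|\Vmr|})$, which is the signature \eqref{signature of ASSS}. This step is bookkeeping: we only need to check that the regrading used for the ordinary slice spectral sequence transports verbatim, since the augmentation $X\to S^0$ (when $X=Y_+$) induces a map of towers.

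For convergence, we need two properties of the tower $\{\text{Q}^\bullet\}$: (a) $\colim_n$ of the fibers $\text{Q}_{\geq n}:=X\wedge \text{P}_n\Emr$ is contractible, and (b) $\lim_n \text{Q}^n$ (equivalently $X\wedge\lim_n\text{P}^n\Emr$ up to the comparison below) recovers $X\wedge\Emr$.

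For (a), we use that the slice connectivity of $\text{P}_n\Emr$ grows linearly in $n$. Specifically, $\text{P}_n\Emr$ is slice $\geq n$, hence its underlying and geometric fixed point spectra are roughly $\lfloor n/|\Gmr|\rfloor$-connective. Smashing with a space $X$ does not decrease connectivity of the $\Hmr$-fixed points, by the geometric fixed point/isotropy separation criterion for connectivity. Therefore for each fixed $\Vmr$ the groups $\pi^{\Gmr}_{\Vmr}(X\wedge \text{P}_n\Emr)$ vanish for $n\gg 0$.

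Property (b) is where I expect the main obstacle. The issue is that smashing with $X$ need not commute with the inverse limit when $X$ is infinite, which is the case of interest, $X=\Bmr_{\Cmr_2}\Sigma_{2+}$. I would handle this by arguing degreewise rather than with limits. For fixed $\Vmr$, the connectivity estimate shows that the map $\pi^{\Gmr}_\Vmr(X\wedge\Emr)\to\pi^{\Gmr}_\Vmr(X\wedge\text{P}^n\Emr)$ is an isomorphism once $n$ is large relative to $\dim\Vmr$. This holds because its fiber $X\wedge \text{P}_{n+1}\Emr$ has vanishing $\pi_\Vmr$ and $\pi_{\Vmr-1}$ by (a). Hence the tower is Mittag-Leffler in each degree, in fact eventually constant. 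Consequently the filtration on $\pi^{\Gmr}_{\Vmr-s}(X\wedge\Emr)$ is finite in each degree, and only finitely many differentials can enter or leave any given position.

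This finiteness gives strong convergence by the standard Boardman criterion, which settles the theorem.
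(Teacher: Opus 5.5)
Your treatment of the top of the tower is correct, and it is what the paper compresses into ``$X\geq 0$ and $\text{P}^n_n\Emr\geq n$, so $X$ plays the role of $S^{\infty\lambda}$''. The connectivity of $X\wedge \text{P}_n\Emr$ grows with $n$. Hence in each fixed degree $\pi^{\Gmr}_{\Vmr}(X\wedge\text{P}_n\Emr)$ vanishes for $n\gg 0$, and $\pi^{\Gmr}_{\Vmr}(X\wedge \Emr)\to\pi^{\Gmr}_{\Vmr}(\text{Q}^n)$ is eventually an isomorphism. Arguing degreewise is the right way around the fact that $X\wedge-$ does not commute with limits. Note that your (a) is really a statement about $n\to+\infty$, i.e.\ a limit. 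The colimit of the fibers $X\wedge\text{P}_n\Emr$ along the tower is $X\wedge\Emr$, not a point.

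The gap is at the other end of the tower. You conclude that the filtration on $\pi^{\Gmr}_{\Vmr-s}(\Emr\wedge X)$ is finite in each degree and that only finitely many differentials enter a given position. That holds when $\Emr$ is slice bounded below (e.g.\ $\text{k}\r$). However, the statement allows an arbitrary genuine $\Gmr$-spectrum, and smashing with an infinite complex $X$ destroys the coconnectivity of $\text{P}^n\Emr$ for $n\ll 0$. For instance, take $\Emr=\text{K}\r$ and $X=\Bmr_{\Cmr_2}\Sigma_{2+}$. Then $\pi^{\Cmr_2}_{-\sigma}(X\wedge \text{P}^{-2k}_{-2k}\text{K}\r)\cong \Hmr\ull{\zz}_{k\rho-\sigma}(\Bmr_{\Cmr_2}\Sigma_{2+})$ contains $a_\sigma\sfb_{k\rho}\neq 0$ for every $k\geq 1$. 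So a single stem has nonzero $\Emr_2$-terms in arbitrarily negative filtration, and neither finiteness claim follows.

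More importantly, nothing you prove rules out the spectral sequence converging to the homotopy of the fiber of $X\wedge\Emr\to\colim_{n\to-\infty}\text{Q}^n$ rather than to $\pi_\star(X\wedge\Emr)$. The missing step is exhaustiveness: $\colim_{n\to-\infty}\text{Q}^n\simeq X\wedge\colim_{n\to-\infty}\text{P}^n\Emr\simeq *$. This is exactly what the paper's geometric fixed point display checks. It is formal, since $X\wedge-$ and $\Phi^{H}$ commute with colimits and the slice filtration of $\Emr$ is exhaustive.

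Once you have exhaustiveness, you do not need any finiteness below. The filtration $\mathrm{im}(\pi_\star(X\wedge\text{P}_s\Emr)\to\pi_\star(X\wedge\Emr))$ is exhaustive and is zero for $s\gg 0$ in each degree, by your connectivity estimate. That same estimate leaves each position with only finitely many nonzero outgoing differentials. Therefore $\Emr_\infty$ is the associated graded and convergence is strong.
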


\begin{proof}
    In the tower $\{ \text{Q}^{\bullet} \}$, we note that for every subgroup $H \subseteq G$,
    \begin{align*}
        \Phi^{H}(\colim_n(X \wedge \text{P}^n \Emr)) & \simeq \Phi^{H}(X) \wedge \colim_n \Phi^H(\text{P}^n \Emr) \\
            & \simeq \Phi^{H}(X) \wedge * \simeq *.
    \end{align*}
    It follows that the colimit of the tower is contractible. Since $X \geq 0$ and $\mathrm{P}_n^n \Emr \geq n$, the space $X$ plays the same role as $S^{\infty \lambda}$ in the proof of \cite[Theorem 3.3]{MSZ23}. The proposition then follows by the same argument, replacing $S^{\infty \lambda}$ with $X$.
\end{proof}

\begin{remark}
    In \cref{thm: main computational technique for KR}, we focus on the regular slice filtration introduced in \cite{Ull13}. Unlike the slice filtration of \cite{HHR}, the regular slice filtration involves only regular slice cells of the form $\Gmr_+ \wedge S^{k\rho_H}$ for $H \subseteq \Gmr$. The relationship between these two filtrations is discussed in \cite[Proposition I.3.4]{Ull13}. In the case of interest, such as $\text{E} = \text{k}\r$, the two filtrations agree. Thus, we will use the terms ``slice filtration” and ``regular slice filtration” interchangeably for the remainder of the paper.
\end{remark}

\begin{definition}
    Fix a finite group $\Gmr$. For any $\text{G}$-space $Y$, we call the spectral sequence arising from \Cref{thm: main computational technique for KR}, which computes $\pi_\star^\text{G}(\Emr \wedge Y_+)$, the \emph{augmented slice spectral sequence}, denoted $\textbf{ASliceSS}_\Emr(Y)$.
\end{definition}

If $\Ymr$ is a $\Gmr$-space, then there is an augmentation
\[\Emr \wedge \Ymr_+ \to \Emr \wedge \Smr^0, \]
which sends $Y$ to the non-basepoint of $\Smr^0$.
This induces a map of spectral sequences
\begin{equation}\label{eqn:augmentationMap}
    \varepsilon:\textbf{ASliceSS}_\Emr(\Ymr) \to \textbf{SliceSS}(\Emr)
\end{equation}
which we will refer to as an \emph{augmentation}.
\begin{remark} 
    When $X = \widetilde{\Emr}\Cmr_2$, the spectral sequence associated to $\text{Q}^\bullet$ is the $a_\sigma$-localized slice spectral sequence considered in \cite{MSZ23} as $\widetilde{\Emr}\Cmr_2\wedge \mathrm{E}\simeq a_\sigma^{-1} \mathrm{E}$. 
\end{remark}
\begin{remark} \label{rem: lax monidality}
    If $Y$ is a $\Cmr_2$-equivariant H-space and the (regular) slice filtration for $\Emr$ is multiplicative, then $\textbf{ASliceSS}_\Emr(Y)$ is multiplicative. This follows from the discussion in \cite[Chapter I.4]{Ull13}. In this case, the functor $\text{Q}^{\bullet}(-)$ is lax monoidal since $\text{P}^{\bullet}(-)$ is.
\end{remark}
When $\Gmr = \Cmr_{2}$, applying \Cref{thm: main computational technique for KR} with $\Xmr = \widetilde{\Emr}\Cmr_2\wedge \Ymr_+$ leads to the construction of a {\it localized augmented slice spectral sequence}
\begin{equation}\label{eqn:locSliceSS}
    \Emr_2^{s,\Vmr} = \pi_{\Vmr - s}^{\text{C}_2}((\text{P}^\bullet\Emr)^\Phi \wedge Y_+) \implies \pi_{\Vmr-s}^{\text{C}_2}(\Emr^\Phi\wedge Y_+),
\end{equation} 
where $(-)^\Phi \coloneqq \widetilde{\Emr}\Cmr_2\wedge -$ is being applied to each term in the filtration $\text{P}^\bullet \Emr$.
Notice that the adjectives ``localized" and ``augmented" commute in the description of this spectral sequence. In particular, \eqref{eqn:locSliceSS} is identical to $a_\sigma^{-1}\textbf{ASliceSS}_{\Emr}(Y) = \textbf{ASliceSS}_{\Emr^\Phi}(Y)$. The ring map $\varphi:\Emr\to \Emr^\Phi$ induces a diagram of spectral sequences
\begin{equation}\label{eqn:localizedAndAugmentedSliceSS}
    \begin{tikzcd}
        \textbf{ASliceSS}_{\Emr}(Y) \ar[r, "\varphi"] \ar[d, "\varepsilon"'] & \textbf{ASliceSS}_{\Emr^\Phi}(Y) \ar[d, "\varepsilon"]\\
        \textbf{SliceSS}(\Emr) \ar[r, "\varphi"] & \textbf{SliceSS}(\Emr^\Phi)
    \end{tikzcd}
\end{equation}
We will deduce differentials in $\textbf{ASliceSS}_{\rm{k}\r}(\Bmr_{\Cmr_2}\Sigma_{2+})$ via its augmentation and by comparing with $\textbf{ASliceSS}_{\rm{k}\r^\Phi}(\Bmr_{\Cmr_2}\Sigma_{2})$.


\subsection{$\text{k}{\r}$-homology of $\Bmr_{\Cmr_2} \Sigma_2$} \

By \eqref{eqn:kRSlice} and \Cref{thm:HZHomologyBC2Sigma2}, we can describe the $\Emr_2$-page of $\textbf{ASliceSS}_{\text{k}\r}(\Bmr_{\Cmr_2}\Sigma_{2})$ as a graded ring:
\begin{equation}\label{eqn:E2PageASSS}
    \Emr_2(\textbf{ASliceSS}_{\rm{k}\r}(\Bmr_{\Cmr_2}\Sigma_{2})) \cong \Hmr\ull{\zz}_\star(\Bmr_{\Cmr_2}\Sigma_{2+})[\overline{v_1}],
\end{equation}
where $|\overline{v_1}| = (2, \rho)$. 

As the structure of $\text{k}\r$ will soon be crucial to our arguments, we include a detailed chart describing the coefficients $\text{k}\r_\star$ in \Cref{fig:kR}.
We use the following notation \cite{Greenlees17}:
\[
\text{k}\mathbb{R}_\star = \frac{\mathbb{Z}[2u_{2\sigma}, u_{4\sigma,} a_\sigma, \overline{v_1}]}{(2a_\sigma, a_\sigma^3\overline{v_1}, (2u_{2\sigma})^2 = u_{4\sigma})} \oplus \bigoplus_{i \geq 0}\mathbb{Z}\left\{\frac{2\theta_2}{2u_{2\sigma}^i} \right\}[\overline{v_1}] \oplus \bigoplus_{\substack{j \geq 0 \\ k \geq 0}}\mathbb{F}_2\left\{\frac{\theta_5}{u_{4\sigma}^j a_{\sigma}^k}\right\} \oplus \bigoplus_{\ell \geq 1} \mathbb{Z}\left\{\frac{\theta_2\overline{v_1}}{u_{4\sigma}^{\ell}} \right\}[\overline{v_1}]
\]

In these charts, a $\blacksquare$ denotes a $\zz$, and a \textcolor{blue}{blue $\bullet$} denotes a $\zz/2$. A \textcolor{blue}{blue} vertical line represents $a_\sigma$-multiplication, and a \textcolor{purple}{purple} line of slope 1 represents $\overline{v_1}$-multiplication.
\vfill
\begin{figure}[H]
    \centering
    \includegraphics[scale=.7]{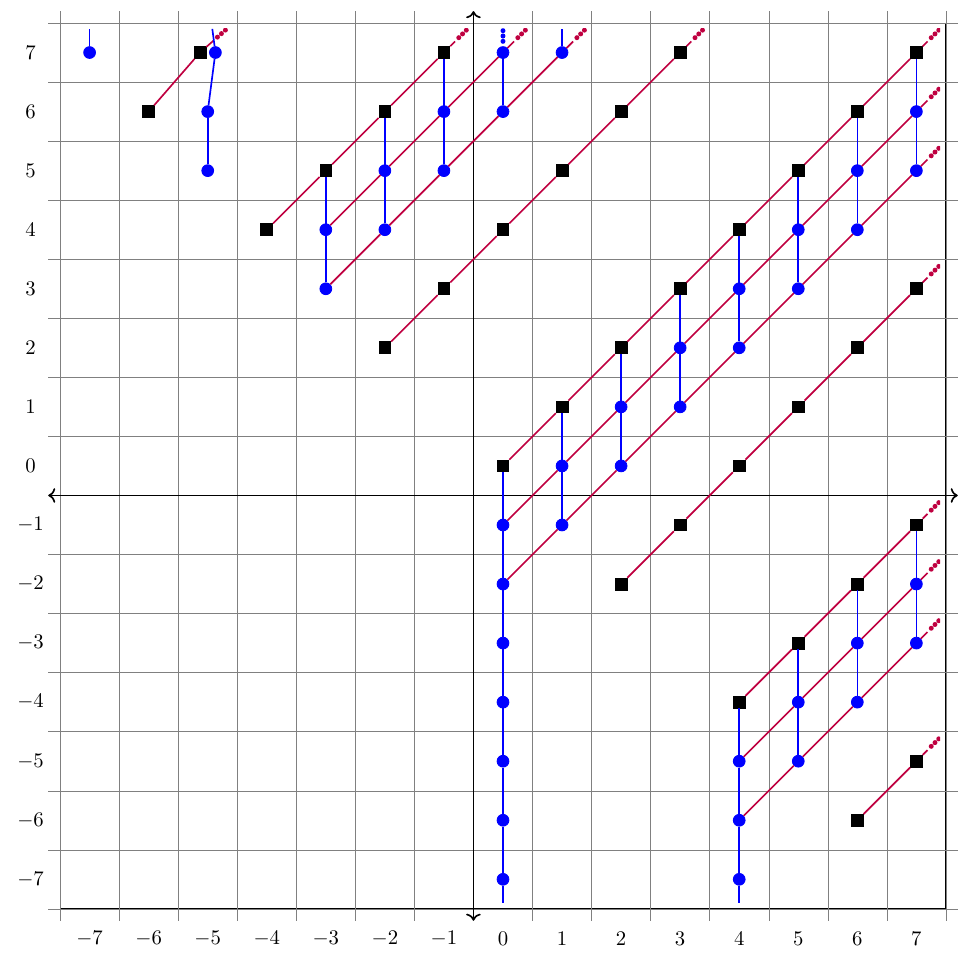}
    \caption{The coefficients $\text{k}\r$.}
    \label{fig:kR}
\end{figure}

The only nontrivial differentials in $\textbf{SliceSS}(\rm{k}\r)$ are generated by
\begin{equation}\label{eqn:differentialSSSkR}
    \text{d}_3(u_{2\sigma}) = a_\sigma^3 \overline{v_1}
\end{equation}
and the Leibniz rule \cite{Dugger}. By pulling back along the augmentation \eqref{eqn:augmentationMap}, this implies that the same differential occurs on the $\Hmr\ull{\z}_\star$-summand of \eqref{eqn:E2PageASSS}.

We depict the $\text{d}_3$-differential on the $\Hmr\ull{\zz}_\star$-summand in \Cref{fig:d3_on_HZ}. In this chart, we have suppressed filtration degrees.

\begin{figure}[H]
    \centering
    \includegraphics[scale=.7]{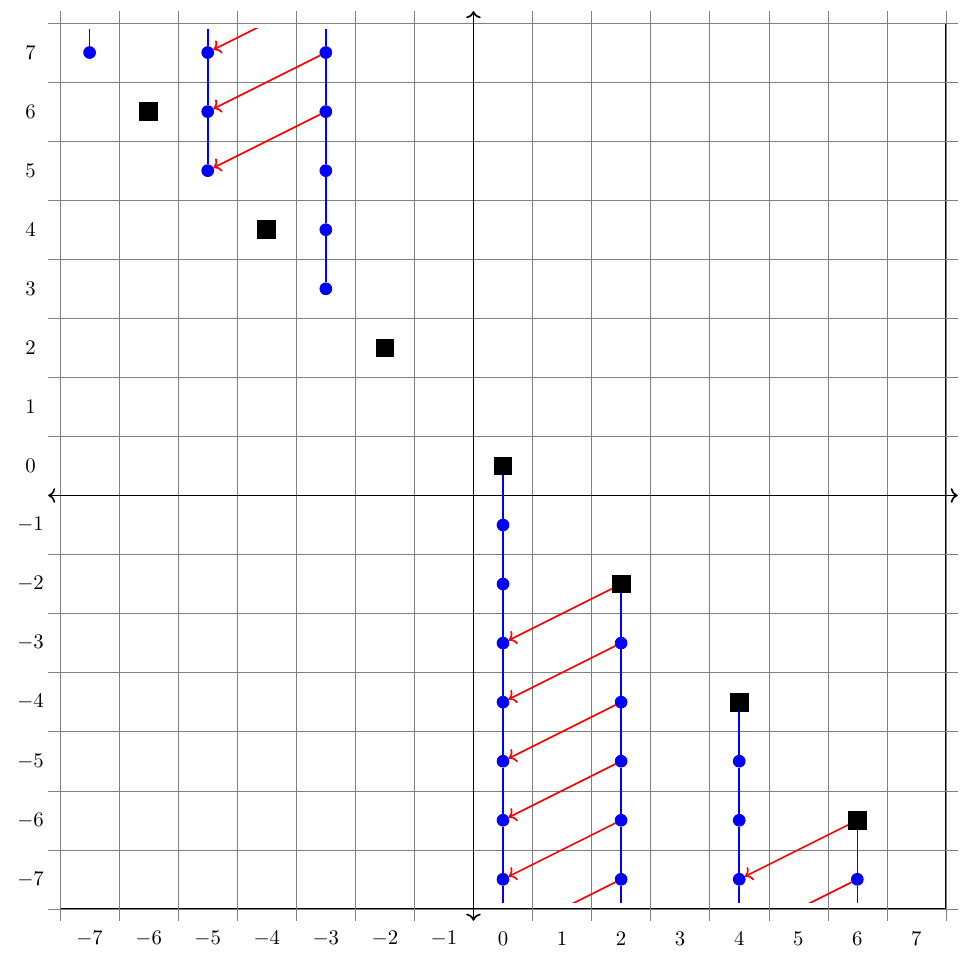}
    \caption{The $\text{d}_3$-differential of \eqref{eqn:E2PageASSS} on the $\Hmr\ull{\z}_\star$ summand.}
    \label{fig:d3_on_HZ}
\end{figure}

The $\Emr_4$-page takes two different forms. In filtration 0, the entire kernel of $\text{d}_3:\Hmr\ull{\z}_\star \to \Hmr\ull{\z}_\star[\overline{v_1}]$ survives, since there is no incoming differential. In higher filtrations, however, there is an incoming differential, resulting in a nontrivial quotient. For the reader’s convenience, we present charts for both cases in \Cref{fig:ker_d3_on_HZ} and \Cref{fig:E4_HZ}, respectively.

\begin{figure}
    \begin{minipage}{.45\textwidth}
    \centering
    \includegraphics[scale=.4]{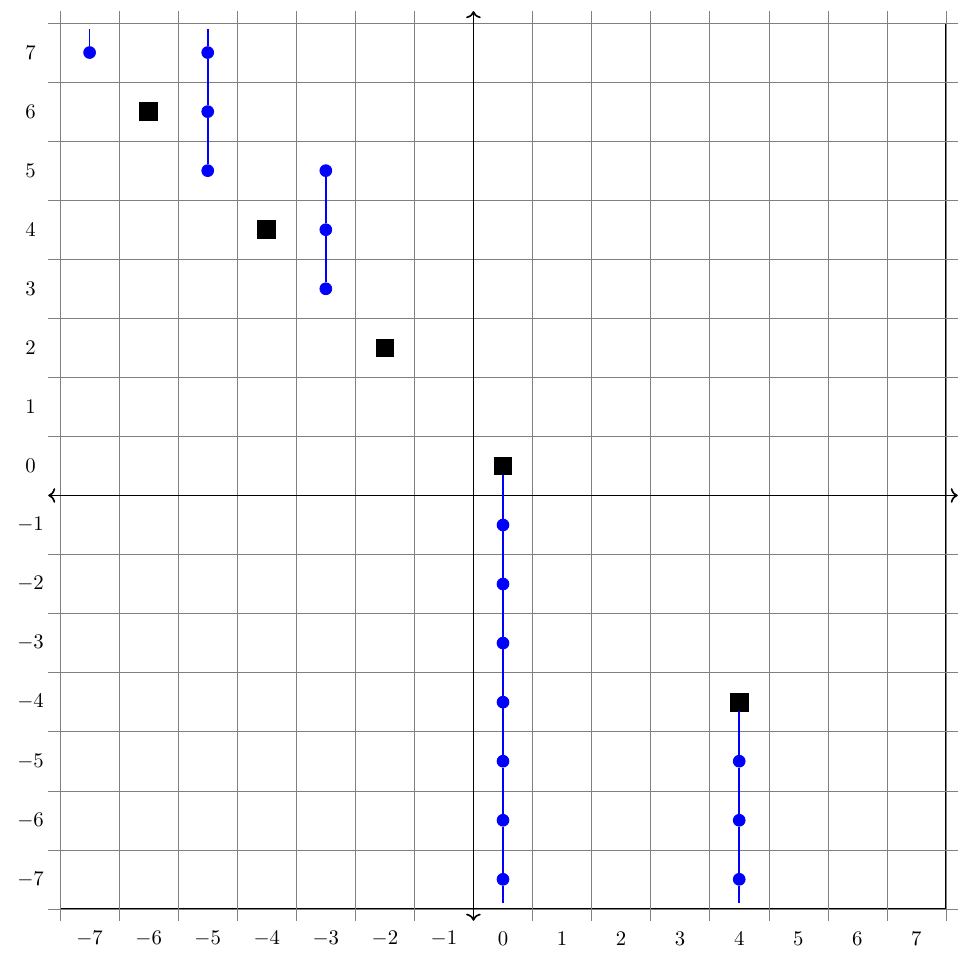}
    \caption{The $\Hmr\ull{\z}_\star$-module \\ $ \text{ker}(\Hmr\ull{\z}_\star \xrightarrow{\text{d}_3}\Hmr\ull{\z}_\star\langle\overline{v_1}\rangle)$.}
    \label{fig:ker_d3_on_HZ}
    \end{minipage}
    \begin{minipage}{.45\textwidth}
        \includegraphics[scale=.4]{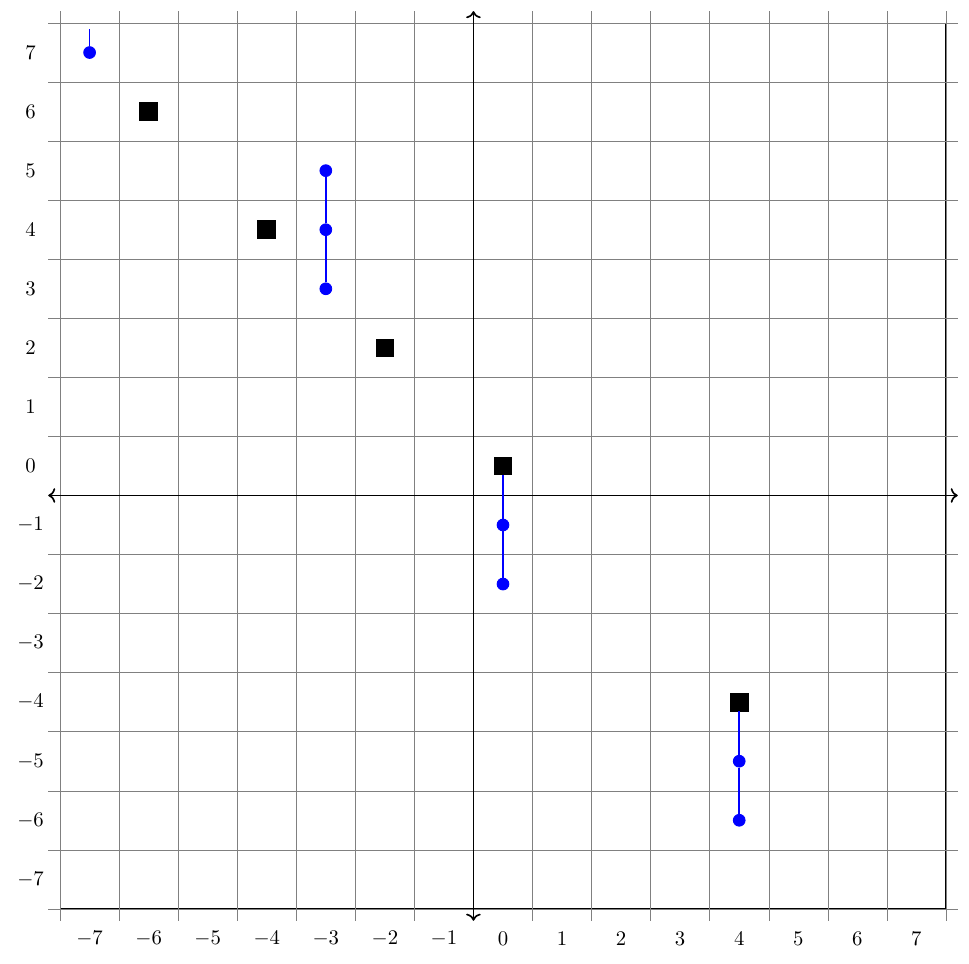}
        \caption{The $\Hmr\ull{\z}_\star$-module $\Hmr_*(\Hmr\ull{\z}_\star\langle \overline{v_1}^m \rangle; \text{d}_3)$.}
        \label{fig:E4_HZ}
    \end{minipage}
\end{figure}

\begin{remark}
One may also interpret the above charts as depicting the $\text{d}_3$-differential in the $\textbf{SliceSS}(\text{k}\r)$.
The structure of the $\Emr_\infty$-page is immediate from \Cref{fig:ker_d3_on_HZ} and \Cref{fig:E4_HZ}, since the nontrivial slices of $\text{k}\r$ are concentrated in degrees that are multiples of the regular representation, and there are no hidden extensions on the $\Emr_\infty$-page. 
\end{remark}



\begin{lemma}\label{lem:ASSSkRCDiffs}
    For each $n\geq 0$, the only nonzero $\text{d}_3$-differentials on the $\Cmr$-summands of \eqref{eqn:E2PageASSS} are determined by
    \begin{equation}
        \text{d}_3(u_{2\sigma}\sfb_{n\rho+\sigma}) = a_\sigma^3\sfb_{n\rho+\sigma}\overline{v_1}
    \end{equation}
\end{lemma}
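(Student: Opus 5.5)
The plan is to use multiplicativity of $\textbf{ASliceSS}_{\text{k}\r}(\Bmr_{\Cmr_2}\Sigma_2)$ (\Cref{rem: lax monidality}), together with the fact that each $\Cmr$-summand is cyclic: as noted in the proof of \Cref{lem: dr-differential in HZSS}, the copy of $\Cmr$ in filtration $n$ is generated by $1\cdot\sfb_{n\rho+\sigma}$ as an $\Hmr\ull{\zz}_\star$-module. On the $\Emr_2$-page \eqref{eqn:E2PageASSS}, every class in the $\Cmr\langle\sfb_{n\rho+\sigma}\rangle[\overline{v_1}]$-part therefore has the form $x\,\sfb_{n\rho+\sigma}\overline{v_1}^k$ with $x\in\Hmr\ull{\zz}_\star$ (taken modulo $2$). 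The first step is to show that $\sfb_{n\rho+\sigma}$ itself is a $\text{d}_3$-cycle. Since $\text{d}_3$ lowers total degree by one and raises the slice filtration by three, its target lies in $\Cmr\langle\sfb_{n\rho+\sigma}\rangle\overline{v_1}$ or in the $\Kmr$- and $\Hmr\ull{\zz}_\star\langle\sfb_0\rangle$-summands, in degree $n\rho+\sigma-1$, filtration shifted by $(2,\rho)$. I will check against the vanishing regions of $\Cmr$ and $\Kmr$ (\Cref{fig:C}, \Cref{fig:K}) that this group is zero. If that degree check is delicate, the fallback is to use the augmentation $\varepsilon$ together with the restriction to the underlying spectral sequence computing $\text{ku}_*(\Bmr\Sigma_{2+})$, which collapses; $\text{d}_3(\sfb_{n\rho+\sigma})$ would then have to be a class in the kernel of restriction, and I would argue that no such class exists in that degree.

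The second step is to apply the Leibniz rule. Because the $\Emr_2$-page is a module over the $\Emr_2$-page of $\textbf{SliceSS}(\text{k}\r)$, and the spectral sequence is multiplicative, I get
\[
\text{d}_3(x\,\sfb_{n\rho+\sigma}) = \text{d}_3(x)\,\sfb_{n\rho+\sigma} \pm x\,\text{d}_3(\sfb_{n\rho+\sigma}) = \text{d}_3(x)\,\sfb_{n\rho+\sigma}.
\]
Here $\text{d}_3(x)$ is computed by \eqref{eqn:differentialSSSkR}, which the augmentation lets me pull back to the $\Hmr\ull{\zz}_\star\langle\sfb_0\rangle$-summand. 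In particular $\text{d}_3(u_{2\sigma}\sfb_{n\rho+\sigma}) = a_\sigma^3\overline{v_1}\sfb_{n\rho+\sigma}$, and every other differential on the $\Cmr$-summand is an $\Hmr\ull{\zz}_\star$-multiple of this one, via $\text{d}_3(u_{2\sigma}^k)=k u_{2\sigma}^{k-1}a_\sigma^3\overline{v_1}$ and the action of the negative cone. This is the meaning of ``determined by'' in the statement.

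The main obstacle is checking that the formula survives passage to the quotient $\Cmr$: I must confirm that $a_\sigma^3\overline{v_1}\sfb_{n\rho+\sigma}$ is nonzero in $\Cmr\langle\sfb_{n\rho+\sigma}\rangle\overline{v_1}$, and that classes such as $2u_{2\sigma}$ (which vanish in $\Cmr$) do not produce inconsistencies. The first point holds because $a_\sigma^3$ is $2$-torsion and not divisible by $2$, so its image in $\Cmr$ is nonzero; this can be read off \Cref{fig:C}. For the second, $\text{d}_3(2u_{2\sigma}) = 2a_\sigma^3\overline{v_1} = 0$, so the formula is compatible with passing to the quotient.

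Finally, I will record that the negative-cone classes $\theta_3/(u_{2\sigma}^j a_\sigma^k)$ in $\Cmr$ support $\text{d}_3$ exactly according to the $\Hmr\ull{\zz}_\star$-module structure. This follows formally from the Leibniz rule once $\text{d}_3(\sfb_{n\rho+\sigma})=0$ is established.
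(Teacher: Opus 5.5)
Your proposal is correct and follows essentially the paper's argument. You pull $\text{d}_3(u_{2\sigma})=a_\sigma^3\overline{v_1}$ back to the $\sfb_0$-summand, use $\text{d}_3(\sfb_{n\rho+\sigma})=0$ for degree reasons, and propagate by the Leibniz rule using that $\Cmr\langle\sfb_{n\rho+\sigma}\rangle$ is cyclic over $\Hmr\ull{\zz}_\star$; phrasing this via the module structure over $\textbf{SliceSS}(\text{k}\r)$ is equivalent to the paper's use of $\sfb_0$ as unit. Your degree check does succeed, since the candidate target groups in the $\sfb_0$-, $\Cmr$- and $\Kmr$-summands all vanish, so the fallback through the underlying spectral sequence is unnecessary.
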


\begin{proof}
    The differential \eqref{eqn:differentialSSSkR} lifts via the augmentation to a differential in $\textbf{ASliceSS}_{{\rm k}\r}(\Bmr_{\Cmr_2}\Sigma_{2})$ given by
    \[
        \text{d}_3(u_{2\sigma}\sfb_0) = a_\sigma^3 \overline{v_1} \sfb_0.
    \]
    Since $\sfb_0$ is the monoidal unit in the multiplicative structure on the $\Hmr\ull{\zz}$-homology of $\Bmr_{\Cmr_2}\Sigma_{2+}$, we can deduce
    \begin{align*}
        \begin{split}
            \text{d}_3(u_{2\sigma}\sfb_{n\rho+\sigma}) &= \text{d}_3(u_{2\sigma} \sfb_0 \cdot \sfb_{n\rho+\sigma})\\
            &= \text{d}_3(u_{2\sigma}\sfb_0)\sfb_{n\rho+\sigma} + u_{2\sigma}\sfb_0 \cdot \text{d}_3(\sfb_{n\rho+\sigma})\\
                &= a_\sigma^3\overline{v_1} \sfb_{n\rho+\sigma} \,,
        \end{split}
    \end{align*}
    where we have used that $\text{d}_3(\sfb_{n\rho+\sigma}) = 0$ for degree reasons. Extending via the Leibniz rule combined the with $\Hmr\ull{\zz}_\star$-module structure now completely determines the behavior of the $\text{d}_3$-differential on any $\text{C}$-summand.
\end{proof}

We depict the $\text{d}_3$-differential from \Cref{lem:ASSSkRCDiffs} in \Cref{fig:d3_on_C}. In this chart, we have suppressed filtration degrees. Just as in the case of the $\Hmr\ull{\z}_\star$-summand, the $\Emr_4$-page takes two different forms. For the reader’s convenience, we present charts for both cases in \Cref{fig:ker_d3_on_C} and \Cref{fig:E4_C}, respectively.

\begin{figure}[H]
    \centering
    \includegraphics[scale=.7]{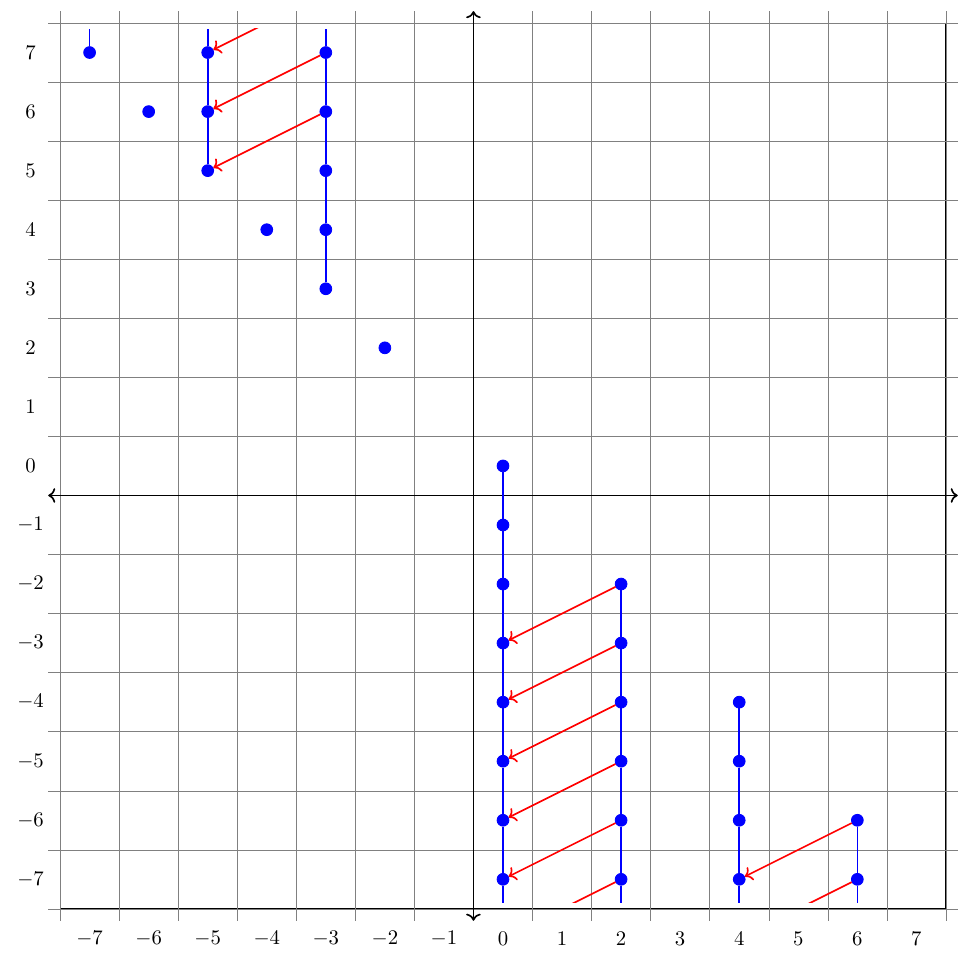}
    \caption{The $\text{d}_3$-differential of \eqref{eqn:E2PageASSS} on the C summand.}
    \label{fig:d3_on_C}
\end{figure}

\begin{figure}[H]
    \begin{minipage}{.45\textwidth}
    \centering
    \includegraphics[scale=.4]{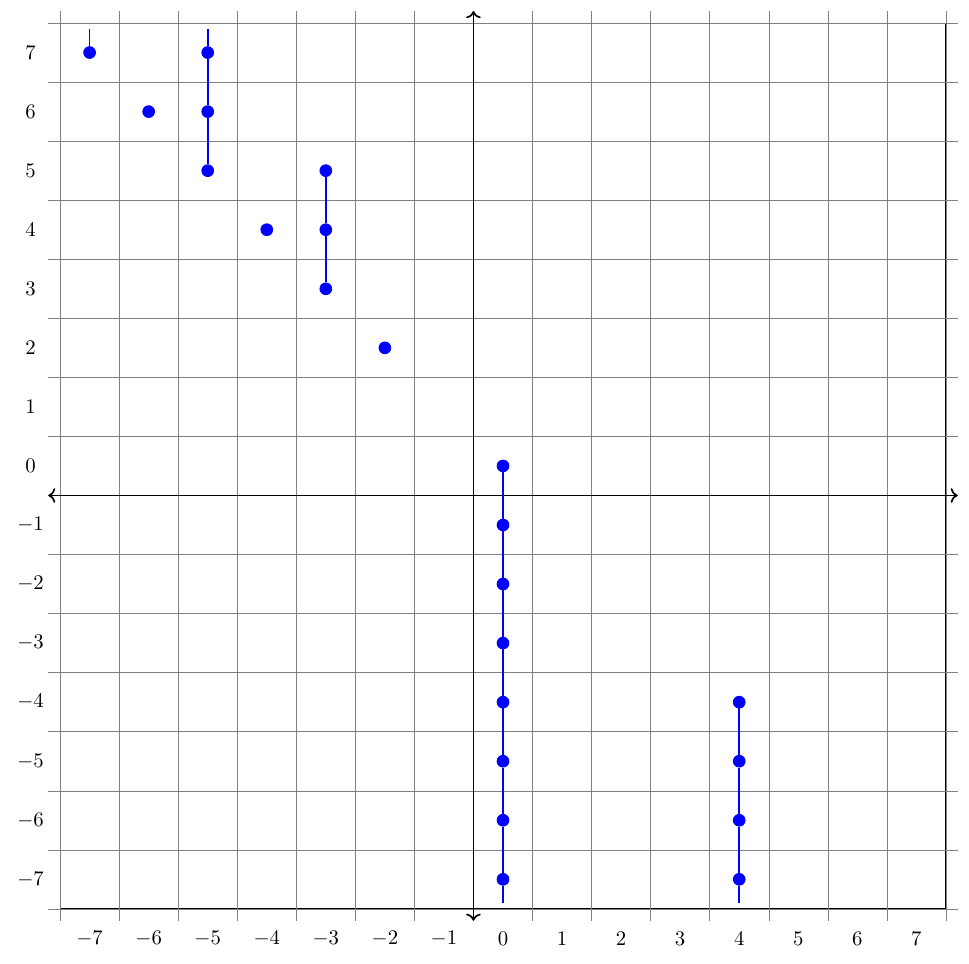}
    \caption{The group $ \text{ker}(\text{C} \xrightarrow{\text{d}_3}\text{C}\langle\overline{v_1}\rangle)$.}
    \label{fig:ker_d3_on_C}
    \end{minipage}
    \begin{minipage}{.45\textwidth}
        \includegraphics[scale=.4]{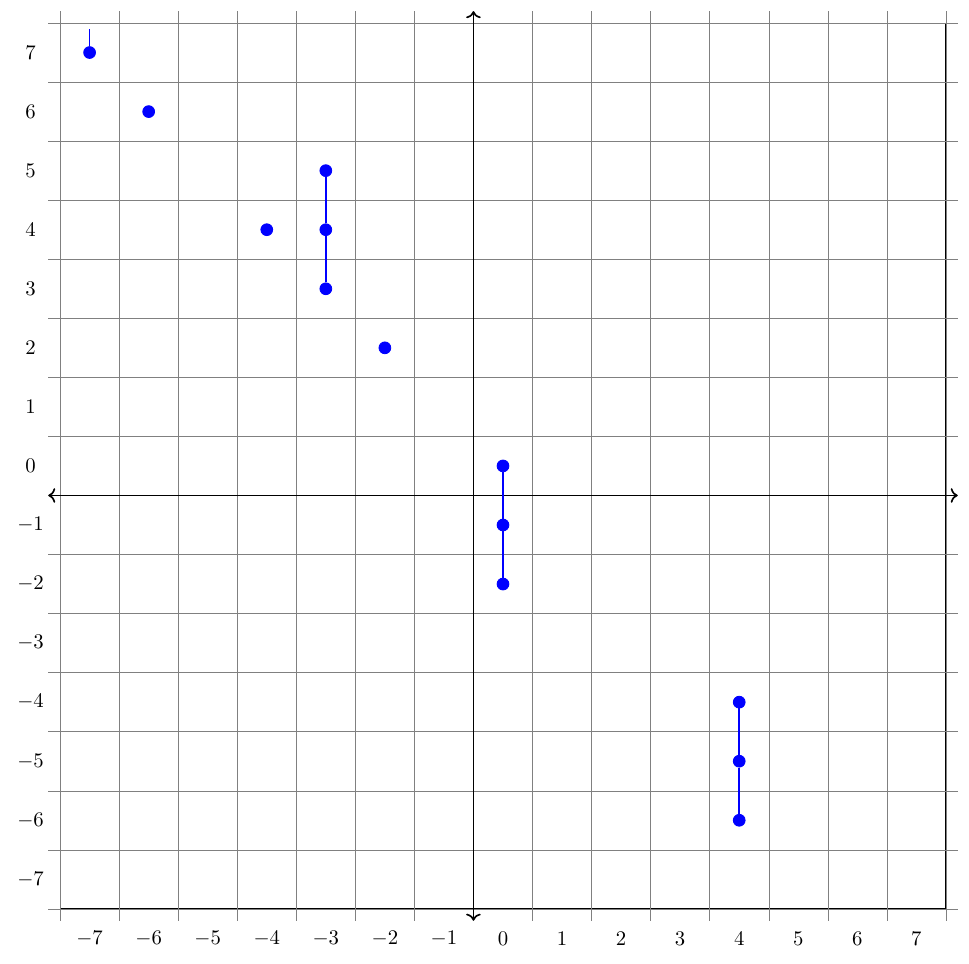}
        \caption{The group \\ $\Hmr_*(\text{C}\langle \overline{v_1}^{m} \rangle; \text{d}_3\rangle$.}
        \label{fig:E4_C}
    \end{minipage}
\end{figure}


To study $\text{d}_3$-differentials on the $\Kmr$-summand, we begin by observing that if $\text{d}_3(a_\sigma\sfb_{n\rho})$ is nonzero, then for degree reasons it is necessarily equal to $a_\sigma \sfb_{(n-1)\rho+\sigma} \overline{v_1}$. Since both the source and target of this potential differential are $a_\sigma$ torsion-free, this differential will be witnessed in $\textbf{ASliceSS}_{\text{k}\r^\Phi}(\Bmr_{\Cmr_2}\Sigma_{2})$. The following lemma is immediate.
\begin{lemma}
    There are isomorphisms of graded rings
    \begin{align*}
    \Hmr\ull{\zz}_\star^{\Phi} &\cong \ff_2[a_\sigma^{\pm}][u_{2\sigma}], \\
    \text{k}\r_\star^\Phi &\cong \ff_2[a_\sigma^{\pm}][u_{4\sigma}].
    \end{align*}
\end{lemma}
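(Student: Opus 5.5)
Here $\Emr^\Phi$ means $\widetilde{\Emr}\Cmr_2\wedge \Emr$, and $\Emr^\Phi_\star \cong a_\sigma^{-1}\Emr_\star$. So I will compute both rings by inverting $a_\sigma$ in the explicit presentations of $\Hmr\ull{\zz}_\star$ and $\text{k}\r_\star$ quoted above from \cite{Greenlees17}. Localization is exact and a ring map, so $a_\sigma^{-1}$ of each summand can be treated separately. The result is automatically a graded ring under $\Emr_\star$.

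For $\Hmr\ull{\zz}_\star$, begin with the positive cone $\zz[u_{2\sigma},a_\sigma]/(2a_\sigma)$. Once $a_\sigma$ is a unit, the relation $2a_\sigma=0$ forces $2=0$, so this summand localizes to $\ff_2[a_\sigma^{\pm}][u_{2\sigma}]$. For the negative cone, every element is $a_\sigma$-torsion: either $a_\sigma\cdot\theta_2/u_{2\sigma}^i=0$ for degree reasons (the relevant group vanishes, which can be read off \Cref{fig:HZ}), or a class $\theta_3/(u_{2\sigma}^ja_\sigma^k)$ is killed by $a_\sigma^{k+1}$. Hence the negative cone dies after localization. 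It remains to check that $u_{2\sigma}$ does not become nilpotent, i.e.\ that $a_\sigma^{-1}\zz[u_{2\sigma},a_\sigma]/(2a_\sigma)$ really is the polynomial ring; this is immediate from the presentation. As a sanity check, the answer agrees with the geometric fixed points $\pi_*\Phi^{\Cmr_2}\Hmr\ull{\zz}=\ff_2[b]$ with $|b|=2$, where $b$ corresponds to $u_{2\sigma}a_\sigma^{-2}$.

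For $\text{k}\r_\star$ I would run the same procedure on the presentation above. In the positive cone, $2a_\sigma=0$ again gives characteristic $2$. The relation $a_\sigma^3\overline{v_1}=0$ makes $\overline{v_1}=0$ after inverting $a_\sigma$. The generator $2u_{2\sigma}$ becomes $0$ because $2=0$. The relation $(2u_{2\sigma})^2=u_{4\sigma}$ is the one subtle point: read literally, it would kill $u_{4\sigma}$ too. I would therefore interpret it through the actual multiplicative structure, namely that $u_{4\sigma}$ is an independent generator restricting to $u_{2\sigma}^2$ underlying, with $(2u_{2\sigma})^2=4u_{4\sigma}$. Under that reading $u_{4\sigma}$ survives, giving $\ff_2[a_\sigma^{\pm}][u_{4\sigma}]$. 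To confirm it is not nilpotent, I would map along $\text{k}\r\to\Hmr\ull{\zz}$, which sends $u_{4\sigma}\mapsto u_{2\sigma}^2$, a non-nilpotent element after localization by the first computation. This same map shows the localized ring injects into $\ff_2[a_\sigma^{\pm}][u_{2\sigma}]$ with image $\ff_2[a_\sigma^{\pm}][u_{2\sigma}^2]$.

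The negative cone summands of $\text{k}\r_\star$ are all $a_\sigma$-torsion: the $\theta_5$ family is visibly so, and the $\zz$-families lie in degrees where multiplication by $a_\sigma$ lands in zero groups (\Cref{fig:kR}). So they vanish after localization. The main obstacle is pinning down the $u_{4\sigma}$ relation and the exact torsion claims. The cleanest cross-check is that $a_\sigma^{-1}\text{k}\r_\star$ is the $\Emr_4=\Emr_\infty$ page of the $a_\sigma$-localized slice spectral sequence. There $a_\sigma^{-1}$ of the $\Emr_2$-page is $\ff_2[a_\sigma^{\pm},u_{2\sigma},\overline{v_1}]$, and $d_3(u_{2\sigma})=a_\sigma^3\overline{v_1}$ leaves exactly $\ff_2[a_\sigma^{\pm}][u_{2\sigma}^2]$, with $u_{4\sigma}$ detected by $u_{2\sigma}^2$.
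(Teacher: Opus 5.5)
Your proposal is correct and is essentially the paper's argument. The paper just calls the lemma immediate, which amounts to inverting $a_\sigma$ in the presentations of $\Hmr\ull{\zz}_\star$ and $\text{k}\r_\star$. Your cross-check via the $a_\sigma$-localized slice spectral sequence is exactly the computation the paper records right after the lemma. You are also right that the printed relation has to be read as $(2u_{2\sigma})^2=4u_{4\sigma}$, since read literally it would kill $u_{4\sigma}$ after localization.

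One small correction: $a_\sigma\cdot\theta_2\overline{v_1}/u_{4\sigma}^{\ell}$ lands in degree $-(1+4\ell)+(2+4\ell)\sigma$. That degree contains the nonzero class $\theta_5/(u_{4\sigma}^{\ell-1}a_\sigma)$, so the group there is not zero. The product still vanishes, but because $a_\sigma$-multiplication preserves slice filtration and this degree is concentrated in filtration $0$, below the source. Your localized spectral sequence argument makes this point moot anyway.
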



It follows that
\[
    \Emr_2(\textbf{SliceSS}(\text{k}\r^\Phi)) \cong \ff_2[a_\sigma^{\pm}][u_{2\sigma}][\overline{v_1}],
\]
therefore all nontrivial differentials in this spectral sequence are determined by
\[
    \text{d}_3(u_{2\sigma}\overline{v_1}^i) = a_\sigma^3\overline{v_1}^{i+1}. 
\]
Since the $\Emr_\infty$-page of this spectral sequence is concentrated in filtration $0$, there is no room for extensions. 

\begin{proposition}
    There is an isomorphism of graded rings
    \[
        \Hmr\ull{\zz}^\Phi_\star(\Bmr_{\Cmr_2}\Sigma_{2+})\cong \Hmr\ull{\zz}^\Phi_\star\langle \sfb_{n\rho}, \sfb_{n\rho+\sigma}\rangle_{n\geq 0}
    \]
    where
    \begin{itemize}
        \item $\sfb_{n\rho}\cdot \sfb_{m\rho} = (1-\delta_n^m)\sfb_{(n+m)\rho}$ when $n,m\geq 1$;
        \item $\sfb_{n\rho+\sigma}\cdot \sfb_{m\rho + \sigma} = 0$;
        \item $\sfb_{n\rho}\cdot \sfb_{m\rho + \sigma} = \sfb_{(n+m)\rho+\sigma}$.
    \end{itemize}
\end{proposition}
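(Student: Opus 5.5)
We deduce this from the integral computation by localizing: $\Hmr\ull{\zz}^\Phi = \widetilde{\Emr}\Cmr_2\wedge\Hmr\ull{\zz} \simeq a_\sigma^{-1}\Hmr\ull{\zz}$, so $\Hmr\ull{\zz}^\Phi_\star(\Bmr_{\Cmr_2}\Sigma_{2+}) \cong a_\sigma^{-1}\Hmr\ull{\zz}_\star(\Bmr_{\Cmr_2}\Sigma_{2+})$, since inverting an element commutes with homology of a colimit. Localization is exact, so we may localize the $\Hmr\ull{\zz}_\star$-module decomposition of \Cref{thm:HZHomologyBC2Sigma2}.

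The computation then reduces to three localizations, using the preceding lemma $a_\sigma^{-1}\Hmr\ull{\zz}_\star\cong\ff_2[a_\sigma^{\pm}][u_{2\sigma}]$.
\begin{itemize}
\item The free summand gives $\Hmr\ull{\zz}^\Phi_\star\langle\sfb_0\rangle$.
\item Since $2a_\sigma=0$, multiplication by $2$ becomes zero after inverting $a_\sigma$. Hence $a_\sigma^{-1}\Cmr \cong a_\sigma^{-1}\Hmr\ull{\zz}_\star/2 = \Hmr\ull{\zz}^\Phi_\star$. For the same reason $a_\sigma^{-1}\Kmr \cong \Hmr\ull{\zz}^\Phi_\star$: the kernel of $2$ becomes everything. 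Concretely, the classes $a_\sigma^k u_{2\sigma}^j$ with $k\ge1$ lie in $\Kmr$, and they generate after localization.
\end{itemize}
Thus every $\sfb_{n\rho}$ ($n\ge1$) and $\sfb_{n\rho+\sigma}$ becomes a free generator over $\Hmr\ull{\zz}^\Phi_\star$. Strictly, the generator in the $\Kmr$-summand is $a_\sigma\sfb_{n\rho}$, so we set $\sfb_{n\rho}:=a_\sigma^{-1}(a_\sigma\sfb_{n\rho})$ in the localization. This gives the additive statement.

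For the ring structure, the localization map $\varphi:\Hmr\ull{\zz}\to\Hmr\ull{\zz}^\Phi$ is a ring map, so the Pontryagin products are compatible with it. Apply \Cref{thm:HZHomologyProductStructure} with $\alpha=\beta=a_\sigma\in\Kmr$. This gives
\[
(a_\sigma\sfb_{n\rho})(a_\sigma\sfb_{m\rho})=(1-\delta^m_n)a_\sigma^2\sfb_{(n+m)\rho},
\]
and dividing by $a_\sigma^2$ yields the first relation. Likewise
\[
(a_\sigma\sfb_{n\rho})\sfb_{m\rho+\sigma}=a_\sigma\sfb_{(n+m)\rho+\sigma}
\]
gives the third relation after dividing by $a_\sigma$. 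The vanishing $\sfb_{n\rho+\sigma}\sfb_{m\rho+\sigma}=0$ is carried over directly. The case $n=0$ of the third relation holds because $\sfb_0$ is the unit.

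The main subtlety is making sure localization commutes with the Pontryagin product and with the module identifications. Concretely, the classes called $\sfb_{n\rho}$ on the two sides must correspond under $\varphi$, so that the localized decomposition is a ring isomorphism and not just additive.

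As a cross-check, reducing mod $2$ makes everything consistent with \Cref{cor:HF2HomologyProductStructure}, because $a_\sigma^{-1}\Hmr\ull{\zz}\to a_\sigma^{-1}\Hmr\ull{\ff}_2$ is injective on coefficients. Alternatively, one could redo the filtration spectral sequence for $\Hmr\ull{\zz}^\Phi$ directly: there $\mathrm{d}_1=2\cdot$ vanishes, so it collapses at $\Emr_1$.
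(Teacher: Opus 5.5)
Your route differs from the paper's. The paper reruns the filtration spectral sequence with $\Hmr\ull{\zz}^\Phi$-coefficients. It collapses at $\Emr_1$ because $2=0$ in $\Hmr\ull{\zz}^\Phi_\star\cong\ff_2[a_\sigma^{\pm}][u_{2\sigma}]$ and by comparison with $\Hmr\ull{\ff}_2^\Phi$; the $\Emr_1$-page is free, so there are no extensions. The products are then read off from \Cref{cor:HF2HomologyProductStructure} via the map to $\Hmr\ull{\ff}_2^\Phi$-homology.

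You instead use $\Hmr\ull{\zz}^\Phi_\star(-)\cong a_\sigma^{-1}\Hmr\ull{\zz}_\star(-)$ and localize the finished integral answers, \Cref{thm:HZHomologyBC2Sigma2} and \Cref{thm:HZHomologyProductStructure}. Your identifications $a_\sigma^{-1}\Cmr\cong a_\sigma^{-1}\Kmr\cong\Hmr\ull{\zz}^\Phi_\star$ and the renaming $\sfb_{n\rho}:=a_\sigma^{-1}(a_\sigma\sfb_{n\rho})$ are correct, and they make transparent why the $\Kmr$-summands become free. The paper's route is more self-contained: it never passes through the integral extension problem or the integral Bockstein argument, and it compares directly with $\ff_2$. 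In your closing remark, $\mathrm{d}_1=0$ alone does not give collapse at $\Emr_1$. You also need the higher differentials to vanish, which you can get by localizing \Cref{lem: dr-differential in HZSS}.

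However, the statement that you and the paper are proving is false as written, so both arguments deduce it from a false input. The first and third relations contradict associativity of the Pontryagin product; $\Bmr_{\Cmr_2}\Sigma_2$ is a homotopy-associative H-space. The stated relations give two clashes:
\begin{itemize}
\item $(\sfb_\rho\sfb_\rho)\sfb_{2\rho}=0$, but $\sfb_\rho(\sfb_\rho\sfb_{2\rho})=\sfb_\rho\sfb_{3\rho}=\sfb_{4\rho}\neq 0$;
\item $(\sfb_\rho\sfb_\rho)\sfb_\sigma=0$, but $\sfb_\rho(\sfb_\rho\sfb_\sigma)=\sfb_\rho\sfb_{\rho+\sigma}=\sfb_{2\rho+\sigma}\neq0$.
\end{itemize}

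The error sits in the imported input, \Cref{cor:HF2HomologyProductStructure}, and hence also in \Cref{thm:HZHomologyProductStructure}. Dualize \eqref{eqn:F2CohomologyBC2Sigma2} using that $\sfe$ and $y$ are primitive and that $\{\sfe^k,\sfe^ky\}$ is a basis. This gives the divided-power relations
\[
\sfb_{n\rho}\sfb_{m\rho}=\tbinom{n+m}{n}\sfb_{(n+m)\rho},\qquad \sfb_{n\rho}\sfb_{m\rho+\sigma}=\tbinom{n+m}{n}\sfb_{(n+m)\rho+\sigma},\qquad \sfb_{n\rho+\sigma}\sfb_{m\rho+\sigma}=0,
\]
with coefficients mod $2$, valid for all $n,m\geq 0$. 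In particular $\sfb_\rho\sfb_{3\rho}=0$ and $\sfb_\rho\sfb_{\rho+\sigma}=0$.

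This agrees with the underlying computation $b_{2n}b_{2m}=\tbinom{2n+2m}{2n}b_{2n+2m}\equiv\tbinom{n+m}{n}b_{2n+2m}$ in $\Hmr_*(\Bmr\Sigma_2;\ff_2)$. The comparison is legitimate because restriction to the underlying is a ring map, contrary to the remark in the paper. Your cross-check against \Cref{cor:HF2HomologyProductStructure} cannot catch this, since that corollary contains the same mistake; an associativity check or restriction to the underlying would. With the corrected input, your localization argument goes through verbatim and proves the corrected proposition.
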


\begin{proof}
    The $\Hmr\ull{\zz}^\Phi$-homology spectral sequence associated to \eqref{filtration:p(n rho)} collapses on the $\Emr_1$-page since $\Hmr\ull{\zz}^\Phi_0\cong \ff_2$ and there are no higher differentials in the analogous $\Hmr\ull{\ff}_2^\Phi$-homology spectral sequence. There are no extension problems as the $\Emr_1$-page is free. The product structure follows by comparing with the $\Hmr\ull{\ff}_2^\Phi$-homology spectral sequence, whose product structure is completely determined by \Cref{cor:HF2HomologyProductStructure}.  
\end{proof}

Since $\text{k}\r^\Phi_\star\xrightarrow[]{\varphi_*} \Hmr\ull{\zz}^\Phi_\star$ is an inclusion of a subalgebra, it follows that the $\text{k}\r^{\Phi}$-homology spectral sequence associated to \eqref{filtration:p(n rho)} also collapses immediately. 

\begin{corollary}
\label{cor:krPhiBC2Sigma2}
    There is an isomorphism of graded rings,
    \[
        \text{k}\r^\Phi_\star(\Bmr_{\Cmr_2}\Sigma_{2+})\cong \text{k}\r^\Phi_\star\langle \sfb_{n\rho}, \sfb_{n\rho+\sigma}\rangle_{n\geq 0}
    \]
    where
    \begin{itemize}
        \item $\sfb_{n\rho}\cdot \sfb_{m\rho} = (1-\delta_n^m)\sfb_{(n+m)\rho}$ when $n,m\geq 1$;
        \item $\sfb_{n\rho+\sigma}\cdot \sfb_{m\rho + \sigma} = 0$;
        \item $\sfb_{n\rho}\cdot \sfb_{m\rho + \sigma} = \sfb_{(n+m)\rho+\sigma}$.
    \end{itemize}
\end{corollary}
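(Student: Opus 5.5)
The plan is to run the $\text{k}\r^\Phi$-homology spectral sequence associated to the filtration \eqref{filtration:p(n rho)} and compare it with the $\Hmr\ull{\zz}^\Phi$-homology spectral sequence along the ring map $\varphi:\text{k}\r^\Phi\to \Hmr\ull{\zz}^\Phi$. Both spectral sequences are obtained by smashing the filtration with the respective spectra. As in \eqref{eqn:HZSS}, their $\Emr_1$-pages are identified via the Thom isomorphism for $\xi_\rho$: the $\Hmr\ull{\zz}$-orientation of \cite{BZ} pushes forward along $\Hmr\ull{\zz}\to\Hmr\ull{\zz}^\Phi$. For $\text{k}\r^\Phi$ I would instead use the Real orientation of $\text{k}\r$, which orients $\xi_\rho$ since $\xi_\rho$ is a realification-type bundle of dimension $\rho$, and then localize. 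This gives
\[
\Emr_1=\text{k}\r^\Phi_\star\langle \sfb_{n\rho},\sfb_{n\rho+\sigma}\rangle_{n\geq 0},
\]
a free module. The induced map of $\Emr_1$-pages is $\varphi_*\otimes\mathrm{id}$ on the free generators, and it is injective because $\text{k}\r^\Phi_\star=\ff_2[a_\sigma^\pm][u_{4\sigma}]\subset \ff_2[a_\sigma^\pm][u_{2\sigma}]$.

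Collapse follows by injectivity. Every $\text{k}\r^\Phi_\star$-module differential is determined by its value on a generator $\sfb$. By naturality, its image under the injective comparison map equals the corresponding differential in the $\Hmr\ull{\zz}^\Phi$-spectral sequence, which vanishes by the preceding Proposition. Inductively on $r$, each $\Emr_r$-page of the $\text{k}\r^\Phi$-spectral sequence equals $\Emr_1$ and still injects into the corresponding $\Hmr\ull{\zz}^\Phi$-page, so all $\mathrm{d}_r$ vanish. The $\Emr_\infty$-page is free over $\text{k}\r^\Phi_\star$, so lifting generators splits the filtration, and the additive isomorphism follows. Convergence is fine since $\Bmr_{\Cmr_2}\Sigma_2$ is the colimit of the filtration.

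For the ring structure, multiplicativity of the spectral sequence comes from the $\Hmr$-space structure, as in \Cref{rem: lax monidality}. The abutment map $\text{k}\r^\Phi_\star(\Bmr_{\Cmr_2}\Sigma_{2+})\to\Hmr\ull{\zz}^\Phi_\star(\Bmr_{\Cmr_2}\Sigma_{2+})$ is a ring map. It is injective because it is injective on associated gradeds and both filtrations are exhaustive and split. So if the lifts $\sfb$ are chosen compatibly, the products $\sfb\cdot\sfb'$ can be computed after mapping to $\Hmr\ull{\zz}^\Phi$, where the Proposition gives exactly the stated relations.

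The main obstacle is choosing the lifts $\sfb_{n\rho},\sfb_{n\rho+\sigma}$ in $\text{k}\r^\Phi$-homology so that they map precisely to the named classes in $\Hmr\ull{\zz}^\Phi$-homology, rather than those classes plus higher-filtration corrections. Equivalently, one must check that the comparison map is an isomorphism onto a $\text{k}\r^\Phi_\star$-lattice spanned by those same generators. I would first compare the $\text{k}\r$-Thom class of $\xi_\rho$ with the $\Hmr\ull{\zz}$ one under $\text{k}\r\to\Hmr\ull{\zz}$, using that $\text{k}\r\to\Hmr\ull{\zz}$ is the zeroth slice section so that Thom classes correspond. Failing that, I would argue directly via the $\Hmr\ull{\ff}_2^\Phi$ comparison as in the Proposition's proof: the relations there are forced by dualizing the Hu--Kriz cohomology ring \eqref{eqn:F2CohomologyBC2Sigma2}, and any correction terms would violate the degree constraints.
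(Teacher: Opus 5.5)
Your collapse argument is the paper's own, written out more carefully: the paper's entire justification is that $\text{k}\r^\Phi_\star\to\Hmr\ull{\zz}^\Phi_\star$ is a subalgebra inclusion. It does give the additive isomorphism. The proof fails at the ring structure, and not only at the lifting step you flag: the relations you import from the preceding Proposition cannot hold. The Pontryagin product is associative, since $\Bmr_{\Cmr_2}\Sigma_2$ is a homotopy associative and commutative $H$-space and $\text{k}\r^\Phi$ is a ring spectrum. Yet the first bullet gives
\[
\sfb_\rho\cdot(\sfb_\rho\cdot\sfb_{2\rho})=\sfb_\rho\cdot\sfb_{3\rho}=\sfb_{4\rho}\neq 0, \qquad (\sfb_\rho\cdot\sfb_\rho)\cdot\sfb_{2\rho}=0,
\]
and the first and third bullets together force $\sfb_{2\rho+\sigma}=\sfb_\rho\cdot(\sfb_\rho\cdot\sfb_\sigma)=0$. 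So ``the Proposition gives exactly the stated relations'' transports an inconsistency, and the paper's argument has the same defect. The error originates in the $\Hmr\ull{\ff}_2$ computation. Primitivity of $\sfe=\sfe_{\Hmr\ull{\ff}_2}$ and $y$ gives $\Delta^*(\sfe^k)=\sum_i\binom{k}{i}\sfe^i\otimes\sfe^{k-i}$. Dually, $\sfb_{n\rho}\cdot\sfb_{m\rho}=\binom{n+m}{n}\sfb_{(n+m)\rho}$ and $\sfb_{n\rho}\cdot\sfb_{m\rho+\sigma}=\binom{n+m}{n}\sfb_{(n+m)\rho+\sigma}$ mod $2$, a divided power structure as for $\RP^\infty$, while $\sfb_{n\rho+\sigma}\cdot\sfb_{m\rho+\sigma}=0$ is correct. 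This coefficient vanishes when $n=m\geq 1$, but also e.g.\ for $(n,m)=(1,3)$, where $1-\delta_n^m=1$. Your comparison strategy is the right way to carry these corrected relations over to $\text{k}\r^\Phi$.

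Even then, the lifting problem you identify is genuine, and neither remedy you propose settles it. Compatible Thom classes only make the $\Emr_1$-comparison equal to $\varphi_*\otimes\mathrm{id}$; they say nothing about which lift to the abutment you choose. Degrees do not rule out corrections either. Since $a_\sigma$ is invertible, $\text{k}\r^\Phi_{a+b\sigma}\neq 0$ whenever $a\geq 0$ is divisible by $4$. For example, $u_{4\sigma}a_\sigma^{-8}\sfb_0$ and $u_{4\sigma}a_\sigma^{-7}\sfb_\sigma$ have degree $4\rho$ and lower filtration than $\sfb_{4\rho}$. Worse, on the $\Hmr\ull{\zz}^\Phi$ side, $u_{2\sigma}a_\sigma^{-4}\sfb_{(n-2)\rho}$ has the degree of $\sfb_{n\rho}$. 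If the image of your lift of $\sfb_{n\rho}$ has a leading correction of this kind, no re-choice of lift removes it, because such a term is not in the image of $\text{k}\r^\Phi_\star\langle\sfb\rangle$ on associated gradeds. You need a structural choice of generators. One option is geometric fixed points: $\Phi^{\Cmr_2}(\text{k}\r\wedge\Bmr_{\Cmr_2}\Sigma_{2+})\simeq\Phi^{\Cmr_2}\text{k}\r\wedge(\RP^\infty\times\zz/2)_+$ as ring spectra, naturally in the coefficient spectrum. A ring map $\Hmr\ff_2\to\mathrm{MO}=\Phi^{\Cmr_2}\mathrm{MU}_\r\to\Phi^{\Cmr_2}\text{k}\r$ then lets you choose generators from the mod $2$ Pontryagin ring of $\RP^\infty\times\zz/2$, and the binomial relations can be read off directly there.
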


We are now able to deduce all of the differentials in $\textbf{ASliceSS}_{\text{k}\r^\Phi}(\Bmr_{\Cmr_2}\Sigma_2)$.

\begin{proposition}\label{prop:kRPhiBC2Sigma2SS}
    The nontrivial differentials in $\textbf{ASliceSS}_{\text{k}\r^\Phi}(\Bmr_{\Cmr_2}\Sigma_2)$ are determined by
    \begin{align*}
        \begin{split}
            \text{d}_3(u_{2\sigma}\sfb_{n\rho}) &= a_\sigma^3 \sfb_{n\rho}\overline{v_1},\\
            \text{d}_3(u_{2\sigma}\sfb_{n\rho+\sigma}) &= a_\sigma^3\sfb_{n\rho+\sigma}\overline{v_1}.
        \end{split}
    \end{align*}
\end{proposition}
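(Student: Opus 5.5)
The plan is to compute the $\Emr_2$-page, produce the differentials on the generators via the augmentation and the Leibniz rule, and then check that what survives matches the already-known abutment of \Cref{cor:krPhiBC2Sigma2}.

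\textbf{The $\Emr_2$-page.} Smashing the slice tower of $\text{k}\r$ with $\widetilde{\Emr}\Cmr_2$ is exact, so the localized slices are $\Sigma^{2m\rho}\Hmr\ull{\zz}^\Phi$. Combined with the proposition computing $\Hmr\ull{\zz}^\Phi_\star(\Bmr_{\Cmr_2}\Sigma_{2+})$, this gives
\[
\Emr_2\cong \ff_2[a_\sigma^{\pm}][u_{2\sigma}][\overline{v_1}]\langle \sfb_{n\rho},\sfb_{n\rho+\sigma}\rangle_{n\geq 0}.
\]
This is a free module over $\Emr_2(\textbf{SliceSS}(\text{k}\r^\Phi))$, with the products recorded in that proposition. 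By \Cref{rem: lax monidality} the spectral sequence is multiplicative, since $\Bmr_{\Cmr_2}\Sigma_2$ is an $\Hmr$-space.

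\textbf{The $\text{d}_3$-differentials.} Apply the augmentation $\varepsilon$ of \eqref{eqn:localizedAndAugmentedSliceSS}. The class $\sfb_0$ maps to $1$, and $\sfb_0$ is the unit, so the known differential $\text{d}_3(u_{2\sigma})=a_\sigma^3\overline{v_1}$ in $\textbf{SliceSS}(\text{k}\r^\Phi)$ pulls back to $\text{d}_3(u_{2\sigma}\sfb_0)=a_\sigma^3\overline{v_1}\sfb_0$.

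Next we need $\text{d}_r(\sfb_{n\rho})=0$ and $\text{d}_r(\sfb_{n\rho+\sigma})=0$ for all $r$. The cleanest route is to notice that these classes lie in slice filtration $0$ and, in the unlocalized setting, have permanent-cycle lifts. I would argue as follows.
\begin{itemize}
\item By \Cref{cor:krPhiBC2Sigma2}, the abutment $\text{k}\r^\Phi_\star(\Bmr_{\Cmr_2}\Sigma_{2+})$ is free on the classes $\sfb$.
\item The edge map onto filtration $0$ is the map induced by $\text{k}\r^\Phi\to\Hmr\ull{\zz}^\Phi$, that is, the inclusion $\ff_2[a_\sigma^\pm][u_{4\sigma}]\subset \ff_2[a_\sigma^\pm][u_{2\sigma}]$ tensored with the classes $\sfb$.
\item Hence each $\sfb_{n\rho}$ and $\sfb_{n\rho+\sigma}$ is in the image of the edge homomorphism, and is therefore a permanent cycle.
\end{itemize}
The Leibniz rule then gives
\[
\text{d}_3(u_{2\sigma}\sfb)=\text{d}_3(u_{2\sigma}\sfb_0)\cdot\sfb=a_\sigma^3\overline{v_1}\sfb
\]
for each generator $\sfb=\sfb_{n\rho}$ or $\sfb_{n\rho+\sigma}$. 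Extending $\ff_2[a_\sigma^{\pm}][\overline{v_1}]$-linearly and by Leibniz determines $\text{d}_3$ on the whole page: on each summand it is exactly the slice $\text{d}_3$ of $\text{k}\r^\Phi$ tensored with the corresponding $\sfb$.

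\textbf{No further differentials.} Since $\Emr_2$ is free over the $\text{k}\r^\Phi$ slice $\Emr_2$-page on permanent cycles, the $\Emr_4$-page is $\ff_2[a_\sigma^\pm][u_{4\sigma}]\langle\sfb_{n\rho},\sfb_{n\rho+\sigma}\rangle$, concentrated in filtration $0$. Higher differentials out of filtration $0$ could only hit positive filtration, which is now empty, so $\Emr_4=\Emr_\infty$. This agrees with \Cref{cor:krPhiBC2Sigma2}, which is a consistency check on the computation.

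\textbf{Main obstacle.} The delicate point is justifying that the classes $\sfb$ are permanent cycles. Identifying the edge map with the map induced by $\text{k}\r^\Phi\to\Hmr\ull{\zz}^\Phi=\mathrm{P}^0_0$ needs care with the localization. If that identification proves awkward, the fallback is a degree argument: $\sfb$ lies in $\ff_2[a_\sigma^\pm]$-degree with no $u_{2\sigma}$. A possible target of $\text{d}_r(\sfb)$ would have to satisfy $\text{d}_r(\sfb)\cdot u_{2\sigma}=\text{d}_r(u_{2\sigma}\sfb)-a_\sigma^3\overline{v_1}\sfb$, and comparing against the known abutment rank via \Cref{cor:krPhiBC2Sigma2} rules out any extra differential.
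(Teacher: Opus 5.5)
Your proof is correct, but its key step differs from the paper's. The paper asserts that $\textbf{ASliceSS}_{\text{k}\r^\Phi}(\Bmr_{\Cmr_2}\Sigma_2)$ is additively a direct sum of copies of $\textbf{SliceSS}(\text{k}\r^\Phi)$ indexed by the classes $\sfb$. It then says that, given the abutment in \Cref{cor:krPhiBC2Sigma2}, the listed $\text{d}_3$'s are ``the only possible for degree reasons.'' It finishes using that $a_\sigma$ is a permanent cycle, plus a degree count for collapse at $\Emr_4$.

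You use \Cref{cor:krPhiBC2Sigma2} differently. You show the $\sfb$'s lie in the image of the edge map $\text{k}\r^\Phi_\star(\Bmr_{\Cmr_2}\Sigma_{2+})\to\Hmr\ull{\zz}^\Phi_\star(\Bmr_{\Cmr_2}\Sigma_{2+})$ into slice $0$, which receives no differentials, so they are permanent cycles. You then read off $\text{d}_3=\text{d}_3^{\mathrm{slice}}\otimes\mathrm{id}$ from the unit $\sfb_0$ and the module structure over $\textbf{SliceSS}(\text{k}\r^\Phi)$.

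The paper's route is shorter. Yours actually proves the splitting the paper takes for granted, which degree considerations cannot supply. For example, $a_\sigma^{-1}u_{2\sigma}\overline{v_1}\sfb_{(n-2)\rho}$ and $a_\sigma^{4}\overline{v_1}\sfb_{n\rho+\sigma}$ lie in the same degree as $a_\sigma^{3}\overline{v_1}\sfb_{n\rho}$. As the next point shows, ``twisted'' formulas involving such terms genuinely occur for other bases, so no counting against the abutment can exclude them.

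\textbf{First refinement: the choice of lifts.} The stated formula holds only for a suitable choice of the lifts $\sfb$. Replacing $\sfb_{2\rho}$ by $\sfb_{2\rho}+a_\sigma^{-4}u_{2\sigma}\sfb_0$ gives a class with $\text{d}_3=a_\sigma^{-1}\overline{v_1}\sfb_0$. So the care needed in your ``main obstacle'' is not about localization. Smashing with $\widetilde{\Emr}\Cmr_2$ is exact, and the edge map is literally induced by $\text{k}\r^\Phi\to(\text{P}^0_0\text{k}\r)^\Phi=\Hmr\ull{\zz}^\Phi$. What is needed is the normalization that the $\Hmr\ull{\zz}^\Phi$-classes $\sfb$ be the images of chosen $\text{k}\r^\Phi$-lifts. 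Equivalently, the $\Hmr\ull{\zz}$-Thom classes should be those induced from $\text{k}\r$-Thom classes. With that stated, your bulleted argument is complete and the fallback degree argument is unnecessary.

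\textbf{Second refinement: the unlocalized aside.} Drop the remark about permanent-cycle lifts ``in the unlocalized setting.'' For $n\geq 1$, $\sfb_{n\rho}$ is not a class on the unlocalized $\Emr_2$-page, since $1\notin\Kmr$. Moreover, the paper's unlocalized analysis (\Cref{lem:ASSSkRKDiffs}) uses this proposition as input, so that route would be circular.
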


\begin{proof}
    Additively, $\textbf{ASliceSS}_{\text{k}\r^\Phi}(\Bmr_{\Cmr_2}\Sigma_2)$ is a direct sum of copies of the $\textbf{SliceSS}(\text{k}\r^\Phi)$-spectral sequence, with summands indexed by $\sfb_{n\rho}$ and $\sfb_{n\rho + \sigma}$. By \Cref{cor:krPhiBC2Sigma2}, the $\text{d}_3$-differentials listed are the only possible for degree reasons. Since $a_\sigma$ is a permanent cycle, this completely determines the $\text{d}_3$-differential. For degree reasons, the spectral sequence collapses at $\Emr_4$.
\end{proof}


The differentials on the $\Kmr$-summands in $\Emr_3(\textbf{ASliceSS}_{\text{k}\r}(\Bmr_{\Cmr_2}\Sigma_2))$ can now be deduced by comparison with $\textbf{ASliceSS}_{\text{k}\r^\Phi}(\Bmr_{\Cmr_2}\Sigma_2)$.

\begin{lemma}\label{lem:ASSSkRKDiffs}
    For each $n\geq 0$, the $\text{d}_3$-differentials on the $\Kmr$-factors of \eqref{eqn:E2PageASSS} are determined by
    \begin{equation}
        \text{d}_3(a_\sigma u_{2\sigma}\sfb_{n\rho}) = a_\sigma^4\overline{v_1} \sfb_{n\rho}.
    \end{equation}
\end{lemma}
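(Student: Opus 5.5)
We compare the augmented slice spectral sequence for $\text{k}\r$ with its localized version. The plan is to use the map of spectral sequences $\varphi:\textbf{ASliceSS}_{\text{k}\r}(\Bmr_{\Cmr_2}\Sigma_2)\to \textbf{ASliceSS}_{\text{k}\r^\Phi}(\Bmr_{\Cmr_2}\Sigma_2)$ from \eqref{eqn:localizedAndAugmentedSliceSS}, which on $\Emr_2$-pages is $a_\sigma$-localization $\Hmr\ull{\zz}_\star(\Bmr_{\Cmr_2}\Sigma_{2+})[\overline{v_1}]\to \Hmr\ull{\zz}^\Phi_\star(\Bmr_{\Cmr_2}\Sigma_{2+})[\overline{v_1}]$. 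We then pull back the differential $\text{d}_3(u_{2\sigma}\sfb_{n\rho}) = a_\sigma^3\sfb_{n\rho}\overline{v_1}$ of \Cref{prop:kRPhiBC2Sigma2SS}.

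First, identify the relevant classes. In $\Kmr$ the element $a_\sigma u_{2\sigma}$ is a $2$-torsion positive-cone class, and $a_\sigma u_{2\sigma}\sfb_{n\rho}$ lies in the $\Kmr$-summand of filtration $n$. Its image under $\varphi$ is $a_\sigma u_{2\sigma}\sfb_{n\rho}$, which is nonzero since it is $a_\sigma$-torsion free. This uses the same description as in \Cref{lem: dr-differential in HZSS}: $\Kmr$ injects into $\Hmr\ull{\ff}_2{}_\star$, and the $a_\sigma$-free part survives localization.

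By the Leibniz rule in the localized sequence, and since $a_\sigma$ is a permanent cycle, $\text{d}_3(a_\sigma u_{2\sigma}\sfb_{n\rho})=a_\sigma^4\overline{v_1}\sfb_{n\rho}\neq 0$ there. Naturality then gives $\varphi(\text{d}_3(a_\sigma u_{2\sigma}\sfb_{n\rho}))=a_\sigma^4\overline{v_1}\sfb_{n\rho}$.

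Next, pin down the unlocalized target. The $\Emr_3$-group in the target tridegree, which is filtration $n+2$, slice degree shifted by $\rho$ and stem one less, must be computed in $\Hmr\ull{\zz}_\star(\Bmr_{\Cmr_2}\Sigma_{2+})\overline{v_1}$. By \Cref{thm:HZHomologyBC2Sigma2} it is spanned by $a_\sigma^4\overline{v_1}\sfb_{n\rho}$, possibly together with $a_\sigma$-torsion negative-cone classes of $\Cmr$ or $\Kmr$ type. We check from the charts (\Cref{fig:K}, \Cref{fig:C}) that no such negative-cone classes occur there, or that any which do map to zero under $\varphi$ but cannot affect the identification. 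Since $\varphi$ is injective on the $a_\sigma$-free positive-cone part, the target is exactly $a_\sigma^4\overline{v_1}\sfb_{n\rho}$. The case $n=0$ is just the unit summand, where the claim follows from the augmentation.

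Finally, extend to the whole $\Kmr$-summand by $\Hmr\ull{\zz}_\star$-linearity via the Leibniz rule, as in \Cref{lem:ASSSkRCDiffs}. Any element of $\Kmr$ is either a multiple of $a_\sigma$ times a positive-cone monomial, or a negative-cone $\theta_3$-type class. For $a_\sigma^k u_{2\sigma}^j\sfb_{n\rho}$ the differential is read off from the formula above. For the negative-cone classes, and for the classes $a_\sigma^k\sfb_{n\rho}$, which are $d_3$-cycles by comparison with the localized sequence and by degree, the vanishing must be argued. The main obstacle is controlling these negative-cone classes. They are $a_\sigma$-divisible but die under localization, so $\varphi$ gives no information about them. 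I would handle them by showing that their possible $\text{d}_3$-targets lie in zero groups. Where a target group is nonzero, I would use $a_\sigma$-divisibility: if $x=a_\sigma y$, then $\text{d}_3(x)=a_\sigma\text{d}_3(y)$, and infinite divisibility together with the vanishing of $a_\sigma$-multiplication on the targets beyond the range forces the differential to be zero.
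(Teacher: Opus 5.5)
\textbf{The gap: negative-cone classes.} In your last step you plan to show that the negative-cone ($\theta_3$-type) classes of $\Kmr\langle\sfb_{n\rho}\rangle$ are $\text{d}_3$-cycles. Many of them are not. For $n\ge1$ we have $\frac{\theta_3}{a_\sigma^3}\sfb_{n\rho}=u_{2\sigma}\cdot\frac{\theta_3}{u_{2\sigma}a_\sigma^3}\sfb_{n\rho}$ and $a_\sigma^3\cdot\frac{\theta_3}{u_{2\sigma}a_\sigma^3}=\frac{\theta_3}{u_{2\sigma}}$. The Leibniz rule with $\text{d}_3(u_{2\sigma})=a_\sigma^3\overline{v_1}$ therefore gives
\[
\text{d}_3\Big(\tfrac{\theta_3}{a_\sigma^3}\sfb_{n\rho}\Big)=\tfrac{\theta_3}{u_{2\sigma}}\,\overline{v_1}\,\sfb_{n\rho}+u_{2\sigma}\,\text{d}_3\Big(\tfrac{\theta_3}{u_{2\sigma}a_\sigma^3}\sfb_{n\rho}\Big).
\]
Here $\frac{\theta_3}{u_{2\sigma}}\overline{v_1}\sfb_{n\rho}$ is a nonzero class of $\Kmr\langle\sfb_{n\rho}\rangle\overline{v_1}$ on $\Emr_3=\Emr_2$, so these two classes cannot both be cycles.

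Your proposed mechanism cannot rescue this. The relevant target groups are not zero. Moreover, $a_\sigma$ acts isomorphically on the $\theta_3$-family, $\frac{\theta_3}{u_{2\sigma}^ja_\sigma^{k+1}}\mapsto\frac{\theta_3}{u_{2\sigma}^ja_\sigma^{k}}$, so writing $x=a_\sigma^Ny$ imposes no constraint.

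The right way to make ``determined by'' precise is to factor
$\frac{\theta_3}{u_{2\sigma}^ja_\sigma^k}\sfb_{n\rho}=\frac{\theta_3}{u_{2\sigma}^ja_\sigma^{k+1}}\cdot a_\sigma\sfb_{n\rho}$.
Then use the module structure over $\textbf{SliceSS}(\text{k}\r)$ together with $\text{d}_3(a_\sigma\sfb_{n\rho})=0$. The differential on such a class is $\text{d}_3\big(\frac{\theta_3}{u_{2\sigma}^ja_\sigma^{k+1}}\big)\cdot a_\sigma\sfb_{n\rho}$. In other words, it is the restriction to $\Kmr[\overline{v_1}]$ of the (nonzero) negative-cone $\text{d}_3$ on $\Hmr\ull{\zz}_\star[\overline{v_1}]$. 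This is what the paper's ``the remaining differentials are determined by the Leibniz rule'' and the later definition $\overline{\Kmr}=\Hmr_*(\Kmr[\overline{v_1}],\text{d}_3)$ rely on.

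\textbf{The displayed formula.} Your derivation is sound and takes a mildly different route from the paper. The paper first proves $\text{d}_3(a_\sigma\sfb_{n\rho})=0$ by the localized comparison. It then applies the Leibniz rule to $u_{2\sigma}\sfb_0\cdot a_\sigma\sfb_{n\rho}$, using $\text{d}_3(u_{2\sigma}\sfb_0)=a_\sigma^3\overline{v_1}\sfb_0$ from the augmentation. You instead pull back the localized differential on $a_\sigma u_{2\sigma}\sfb_{n\rho}$ directly, using injectivity of $\varphi$ on the target group.

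Your description of that target group is inaccurate, though harmlessly so. In degree $(n+1)+(n-3)\sigma$ and $\overline{v_1}$-filtration one it contains, besides $a_\sigma^4\overline{v_1}\sfb_{n\rho}$:
\begin{itemize}
\item $a_\sigma^5\overline{v_1}\sfb_{n\rho+\sigma}$;
\item $a_\sigma u_{2\sigma}\overline{v_1}\sfb_{(n-2)\rho+\sigma}$, for $n\ge2$;
\item the $\zz$-class $u_{2\sigma}\overline{v_1}\sfb_0$, for $n=2$.
\end{itemize}
It contains no negative-cone classes. The map $\varphi$ is injective on the $\ff_2$-classes, and the $\zz$-class is ruled out because the source is $2$-torsion, so your conclusion stands. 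Your fallback, that $a_\sigma$-torsion classes ``cannot affect the identification'', would not have been valid had any been present.
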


\begin{proof}
    Recall that, for degree reasons, if $\text{d}_3(a_\sigma \sfb_{n\rho})$ is nonzero, then it is equal to $a_\sigma\sfb_{(n-1)\rho+\sigma}\overline{v_1}$. The source and target of this differential are witnessed in $\textbf{ASliceSS}_{\text{k}\r^\Phi}(\Bmr_{\Cmr_2}\Sigma_2)$, where this differential is known not to occur due to \Cref{prop:kRPhiBC2Sigma2SS}. So 
    \[\text{d}_3(a_\sigma \sfb_{n\rho}) = 0,\qquad n\geq 0.\]
    Recall that the augmentation for $\textbf{ASliceSS}_{\text{k}\r}(\Bmr_{\Cmr_2}\Sigma_2)$ yields a differential
    \[
        \text{d}_3(u_{2\sigma}\sfb_0) = a_\sigma^3 \overline{v_1} \sfb_0 .
    \]
    From the Leibniz rule and the multiplicative structure in \Cref{thm:HZHomologyProductStructure}, 
    \begin{align*}
        \text{d}_3(a_\sigma u_{2\sigma} \sfb_{n\rho}) &= \text{d}_3(a_\sigma u_{2\sigma} \sfb_0 \cdot \sfb_{n\rho}) \\
          &=\text{d}_3(u_{2\sigma}\sfb_0) \cdot a_\sigma\sfb_{n\rho} + u_{2\sigma}\sfb_0 \cdot \text{d}_3(a_\sigma \sfb_{n\rho}) \\
          &= a_\sigma^4 \overline{v_1} \sfb_{n\rho}.
    \end{align*}
    These two differentials determine the behavior of $\text{d}_3$ on $a_\sigma \ff_2[a_\sigma, u_{2\sigma}]\subset \Kmr$. An arbitrary element in the complement of this subalgebra has the form $\theta_2/a_\sigma^iu_{2\sigma}^j$ where $i,j\geq 0$. For degree reasons, the differentials on $\theta_2\sfb_{n\rho}$ are trivial. The remaining differentials are determined by the Leibniz rule.
\end{proof}

We depict the $\text{d}_3$-differential from \Cref{lem:ASSSkRKDiffs} in \Cref{fig:d3_on_K}. In this chart, we have suppressed filtration degrees. As was the case for the $\Hmr\ull{\z}_\star$-summand, the $\Emr_4$-page takes two different forms. For the reader’s convenience, we present charts for both cases in \Cref{fig:ker_d3_on_K} and \Cref{fig:E4_K}, respectively.

\begin{figure}[H]
    \centering
    \includegraphics[scale=.7]{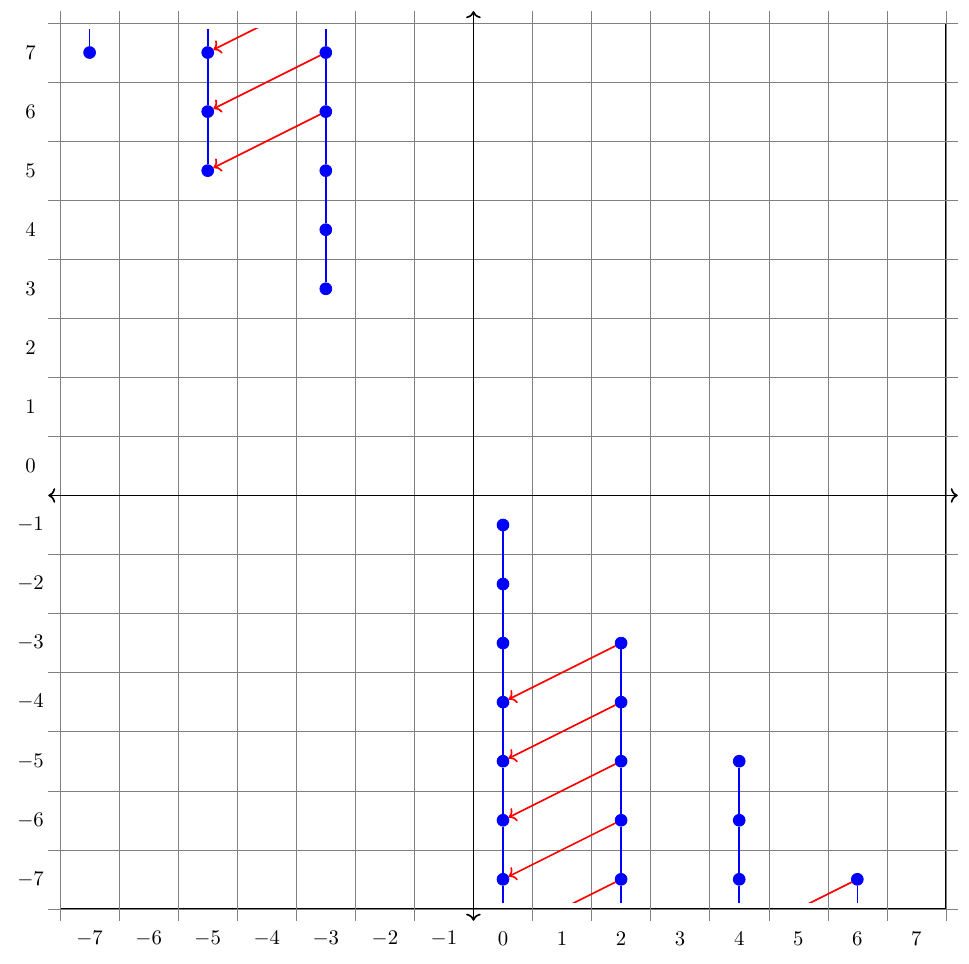}
    \caption{The $\text{d}_3$-differential of \eqref{eqn:E2PageASSS} on the K summand.}
    \label{fig:d3_on_K}
\end{figure}

\begin{figure}[H]
    \begin{minipage}{.45\textwidth}
    \centering
    \includegraphics[scale=.4]{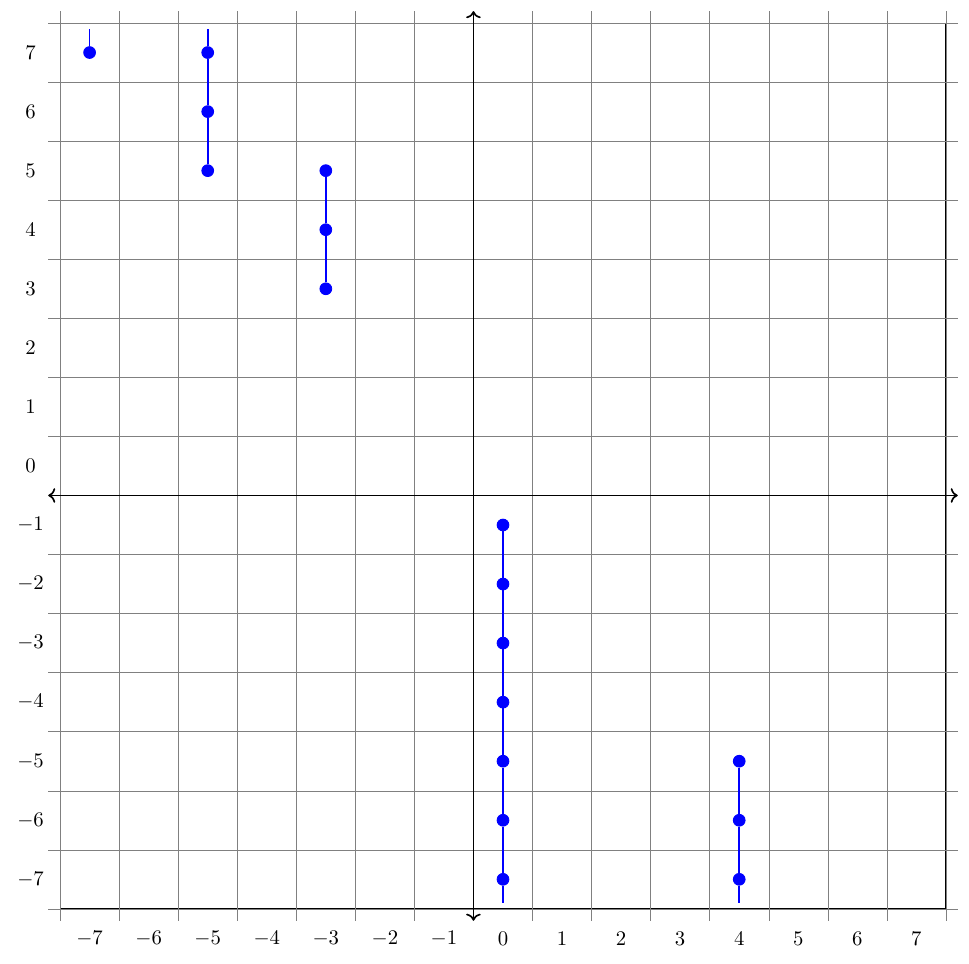}
    \caption{The module $ \text{ker}(\text{K} \xrightarrow{\text{d}_3}\text{K}\langle\overline{v_1}\rangle)$.}
    \label{fig:ker_d3_on_K}
    \end{minipage}
    \begin{minipage}{.45\textwidth}
        \includegraphics[scale=.4]{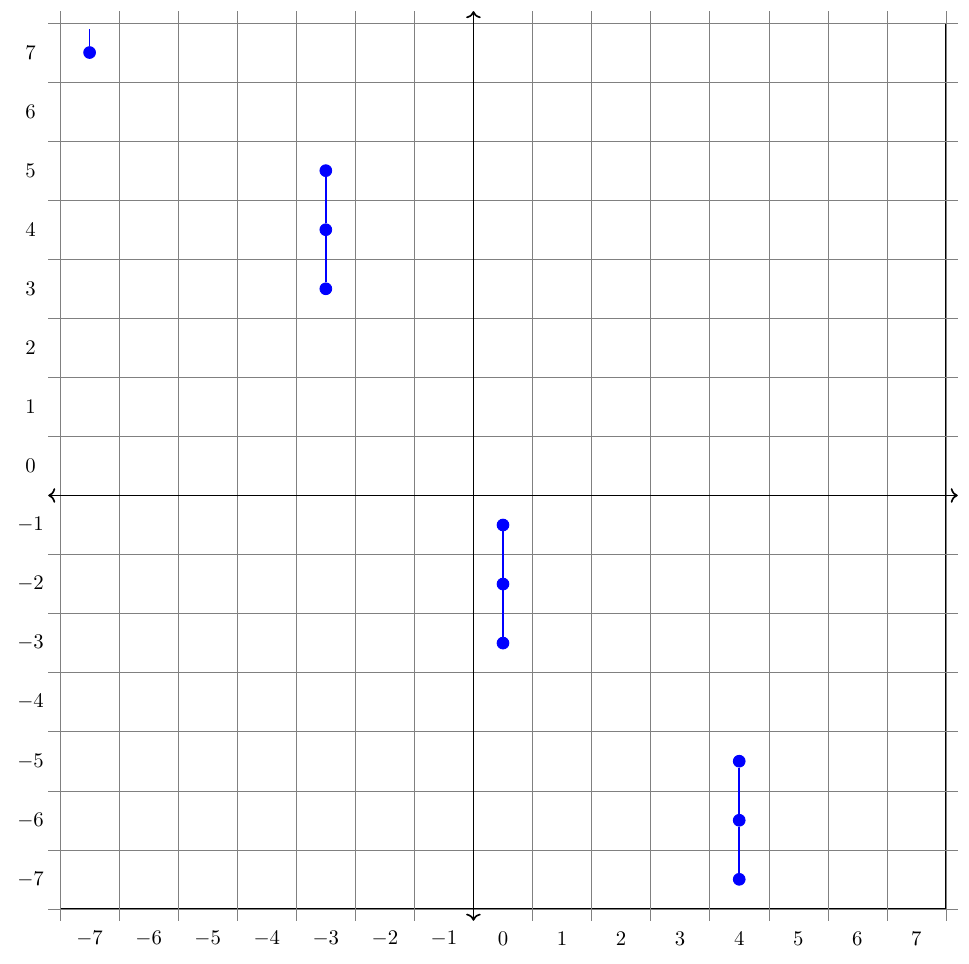}
        \caption{The module \\ $\Hmr_*(\text{K}\langle \overline{v_1}^m \rangle; \text{d}_3)$}
        \label{fig:E4_K}
    \end{minipage}
\end{figure}


Together with the $\text{d}_3$-differential on the $\Hmr\ull{\z}_\star$-summand, \Cref{lem:ASSSkRCDiffs} and \Cref{lem:ASSSkRKDiffs} show that the $\text{d}_3$-differentials take the form
\begin{align*}
    \begin{split}
     \text{d}_3&:\Hmr\ull{\zz}_\star[\overline{v_1}] \to \Hmr\ull{\zz}_\star[\overline{v_1}],\\
        \text{d}_3&:\Cmr[\overline{v_1}]\to \Cmr[\overline{v_1}],\\
        \text{d}_3&:\Kmr[\overline{v_1}]\to \Kmr[\overline{v_1}].
    \end{split}
\end{align*} 
We introduce the following notation to describe $\Emr_4(\textbf{ASliceSS}_{\text{k}\r}(\Bmr_{\Cmr_2}\Sigma_{2}))$.

\begin{definition}\label{defn:assocGradedPieceskRSS}
    Let $\overline{\text{D}}$, $\overline{\Cmr}$, and $\overline{\Kmr}$ denote the homology of $\Hmr\ull{\zz}_\star[\overline{v_1}]$, $\Cmr[\overline{v_1}]$ and $\Kmr[\overline{v_1}]$ under the the $\text{d}_3$-differential, respectively. Namely,
    \begin{align*}
        & \overline{\text{D}} = \Hmr_*(\Hmr\ull{\zz}_\star[\overline{v_1}], \text{d}_3), \\
        & \overline{\Cmr} = \Hmr_*(\Cmr[\overline{v_1}], \text{d}_3), \\
        & \overline{\Kmr} = \Hmr_*(\Kmr[\overline{v_1}], \text{d}_3).
    \end{align*}
\end{definition}

\begin{remark}
    The algebra $\overline{\Dmr}$ is the usual $\Emr_\infty$-page of $\textbf{SliceSS}(\text{k}\r)$.
\end{remark}

\begin{corollary}\label{cor:E3PagekRASSS}
    The $\Emr_4$-page of $\textbf{ASliceSS}_{\text{k}\r}(\Bmr_{\Cmr_2}\Sigma_{2})$ is
    \[
        \overline{\Dmr}\langle \sfb_0\rangle \oplus \overline{\Cmr}\langle \sfb_{n\rho+\sigma}\rangle_{n\geq 0}\oplus \overline{\Kmr}\langle \sfb_{n\rho}\rangle_{n\geq 1}.
    \]
\end{corollary}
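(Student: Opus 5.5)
The plan is to compute the $\Emr_4$-page directly from the $\Emr_2$-page \eqref{eqn:E2PageASSS} by showing that $\text{d}_3$ respects the decomposition
\[
\Hmr\ull{\zz}_\star(\Bmr_{\Cmr_2}\Sigma_{2+})[\overline{v_1}] \cong \Hmr\ull{\zz}_\star[\overline{v_1}]\langle \sfb_0\rangle \oplus \Cmr[\overline{v_1}]\langle \sfb_{n\rho+\sigma}\rangle_{n\geq 0}\oplus \Kmr[\overline{v_1}]\langle \sfb_{n\rho}\rangle_{n\geq 1}
\]
coming from \Cref{thm:HZHomologyBC2Sigma2}. The page is then the direct sum of the homologies of the summands, which are $\overline{\Dmr}$, $\overline{\Cmr}$, $\overline{\Kmr}$ by \Cref{defn:assocGradedPieceskRSS}. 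Since $\text{d}_2$ vanishes for degree reasons (the slices are concentrated in even regular degrees), $\Emr_3=\Emr_2$, and only $\text{d}_3$ needs to be analyzed.

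First, on the $\sfb_0$-summand, the augmentation $\varepsilon$ of \eqref{eqn:augmentationMap} is split by the unit inclusion $S^0\to \Bmr_{\Cmr_2}\Sigma_{2+}$. Hence the $\Hmr\ull{\zz}_\star[\overline{v_1}]\langle\sfb_0\rangle$ summand is a retract of the spectral sequence, isomorphic to $\textbf{SliceSS}(\text{k}\r)$, with $\text{d}_3$ generated by \eqref{eqn:differentialSSSkR}. Its homology is $\overline{\Dmr}$. Because the unit and augmentation are maps of spectral sequences, no $\text{d}_3$ enters or leaves this summand from the others.

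Next, on the $\Cmr$- and $\Kmr$-summands, \Cref{lem:ASSSkRCDiffs} and \Cref{lem:ASSSkRKDiffs} give $\text{d}_3$ on generators. The Leibniz rule over $\Hmr\ull{\zz}_\star$ (\Cref{rem: lax monidality}) then shows that $\text{d}_3$ sends $\alpha\sfb_{n\rho+\sigma}\overline{v_1}^i$ into $\Cmr\langle\sfb_{n\rho+\sigma}\rangle\overline{v_1}^{i+1}$ and $\alpha\sfb_{n\rho}\overline{v_1}^i$ into $\Kmr\langle\sfb_{n\rho}\rangle\overline{v_1}^{i+1}$. This is exactly the summand-preservation recorded after \Cref{lem:ASSSkRKDiffs}.

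The main obstacle is excluding cross-terms. A priori, a $\text{d}_3$ on a $\Kmr\langle\sfb_{n\rho}\rangle$ class could have a component in $\Cmr\langle\sfb_{(n-1)\rho+\sigma}\rangle\overline{v_1}$, which sits in the correct filtration. I would handle this in two cases. For $a_\sigma$-torsion-free sources, compare with $\textbf{ASliceSS}_{\text{k}\r^\Phi}$ via \Cref{prop:kRPhiBC2Sigma2SS}, whose differentials are diagonal in the $\sfb$-basis. For the $\theta_2$-family in $\Kmr$, argue by degree that the only candidate targets in $\Cmr\overline{v_1}$ vanish, using the charts in \Cref{fig:K} and \Cref{fig:C}. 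This is what the degree argument in \Cref{lem:ASSSkRKDiffs} already does.

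Once $\text{d}_3$ is shown to be diagonal, taking homology summand by summand yields the stated $\Emr_4$-page.
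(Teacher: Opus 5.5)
Your overall plan is the paper's: show that $\mathrm{d}_3$ preserves the three summands, then take homology summand by summand. Your use of the unit $S^0\to\Bmr_{\Cmr_2}\Sigma_{2+}$ to split off the $\sfb_0$-summand is in fact cleaner than the paper's appeal to the augmentation alone.

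The step that fails is excluding cross-terms ``by degree'' on the negative-cone part of $\Kmr$. That part consists of the classes $\theta_3/u_{2\sigma}^ja_\sigma^k$; the $\theta_2$-classes are not $2$-torsion, so they are not in $\Kmr$. Since $|\sfb_{(n-2)\rho+\sigma}\overline{v_1}|=n\rho-1$, for every $\alpha$ the classes $\alpha\sfb_{n\rho}$ and $\alpha\sfb_{(n-2)\rho+\sigma}\overline{v_1}$ sit in exactly the bidegrees of a $\mathrm{d}_3$. Each $\Hmr\ull{\zz}_V$ is $0$, $\zz$ or $\zz/2$, so $\Kmr\to\Cmr$ is injective and this target is nonzero. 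For example, $\mathrm{d}_3(\theta_3\sfb_{n\rho})=\theta_3\sfb_{(n-2)\rho+\sigma}\overline{v_1}$ is degree-compatible for every $n\geq 2$. Even in the $\sfb_{(n-1)\rho+\sigma}$-summand you single out, $\tfrac{\theta_3}{u_{2\sigma}^ja_\sigma^2}\sfb_{n\rho}$ and $\tfrac{\theta_2}{u_{2\sigma}^{j+1}}\sfb_{(n-1)\rho+\sigma}\overline{v_1}$ are in compatible degrees. All of these classes are $a_\sigma$-torsion, so the comparison with $\textbf{ASliceSS}_{\text{k}\r^\Phi}$ cannot see them either. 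Neither half of your case split covers them, and citing the degree argument in \Cref{lem:ASSSkRKDiffs} does not help, since it faces the same candidates.

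The missing observation is that $\Kmr=a_\sigma\cdot\Hmr\ull{\zz}_\star$. The $2$-torsion in the positive cone is $a_\sigma$-divisible, and $\theta_3/u_{2\sigma}^ja_\sigma^k=a_\sigma\cdot\theta_3/u_{2\sigma}^ja_\sigma^{k+1}$. So $\Kmr\langle\sfb_{n\rho}\rangle$ is cyclic on $a_\sigma\sfb_{n\rho}$, and for $\gamma\in\Hmr\ull{\zz}_\star$ you can write $\gamma a_\sigma\sfb_{n\rho}=(\gamma\sfb_0)\cdot(a_\sigma\sfb_{n\rho})$. Your retract gives $\mathrm{d}_3(\gamma\sfb_0)=\mathrm{d}_3(\gamma)\sfb_0$, so the Leibniz rule yields $\mathrm{d}_3(\gamma a_\sigma\sfb_{n\rho})=\mathrm{d}_3(\gamma)\,a_\sigma\sfb_{n\rho}+\gamma\,\mathrm{d}_3(a_\sigma\sfb_{n\rho})$. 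The first term lies in $\Kmr\langle\sfb_{n\rho}\rangle\overline{v_1}$. Hence the only input needed is $\mathrm{d}_3(a_\sigma\sfb_{n\rho})=0$, the single $a_\sigma$-torsion-free case handled by \Cref{prop:kRPhiBC2Sigma2SS}, exactly as in the paper. The same argument with the generator $\sfb_{n\rho+\sigma}$ handles the $\Cmr$-summands, since there $\mathrm{d}_3(\sfb_{n\rho+\sigma})=0$ really is forced by degrees. With both families shown to be $\mathrm{d}_3$-stable this way, your summand-wise homology computation gives the stated $\Emr_4$-page.
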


\begin{thm}\label{thm:kRAssociatedGraded}
    The spectral sequence $\textbf{ASliceSS}_{\text{k}\r}(\Bmr_{\Cmr_2}\Sigma_{2})$ collapses on the $\Emr_4$-page. 
\end{thm}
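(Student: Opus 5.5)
We will show that every $\text{d}_r$ with $r\geq 4$ vanishes on the $\Emr_4$-page of \Cref{cor:E3PagekRASSS}. The tools are a degree argument, multiplicativity, and comparison along the two maps in \eqref{eqn:localizedAndAugmentedSliceSS}. The key structural input is that the $\Emr_4$-page is a module over $\overline{\Dmr}$, which is the $\Emr_4=\Emr_\infty$-page of $\textbf{SliceSS}(\text{k}\r)$. It is also a module over the permanent cycles of that spectral sequence via the augmentation and \Cref{rem: lax monidality}. Since $a_\sigma$, $\overline{v_1}$, $u_{4\sigma}$ and $2u_{2\sigma}$ are permanent cycles, it suffices by the Leibniz rule to check vanishing of higher differentials on a set of $\overline{\Dmr}$-module generators of each summand $\overline{\Cmr}\langle\sfb_{n\rho+\sigma}\rangle$ and $\overline{\Kmr}\langle\sfb_{n\rho}\rangle$. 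On the summand $\overline{\Dmr}\langle\sfb_0\rangle$, vanishing follows directly from the augmentation, because $\sfb_0$ splits off via the basepoint inclusion $S^0\to \Bmr_{\Cmr_2}\Sigma_{2+}$ and collapse $\Bmr_{\Cmr_2}\Sigma_{2+}\to S^0$, which makes the sphere summand a retract of spectral sequences.

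The first step is to read off generators from \Cref{fig:ker_d3_on_C,fig:E4_C,fig:ker_d3_on_K,fig:E4_K}. For $\overline{\Cmr}\langle\sfb_{n\rho+\sigma}\rangle$ I expect these to be classes of the form $x\,\sfb_{n\rho+\sigma}$ with $x\in\{1,\,2u_{2\sigma}\text{-type classes},\,u_{2\sigma}^{2}\}$, together with negative-cone classes $\tfrac{\theta_3}{u_{2\sigma}^ja_\sigma^k}\sfb_{n\rho+\sigma}$. For $\overline{\Kmr}\langle\sfb_{n\rho}\rangle$ they should be $a_\sigma\sfb_{n\rho}$, $a_\sigma u_{2\sigma}^2\sfb_{n\rho}$, and the $\theta_2$- and $\theta_3$-type classes.

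The second step handles the classes supported on the positive cone in $a_\sigma$-direction, for instance $a_\sigma\sfb_{n\rho}$ and $a_\sigma u_{2\sigma}^2\sfb_{n\rho}$, and any $a_\sigma$-torsion-free target. These are handled by mapping to $\textbf{ASliceSS}_{\text{k}\r^\Phi}(\Bmr_{\Cmr_2}\Sigma_2)$, which collapses at $\Emr_4$ by \Cref{prop:kRPhiBC2Sigma2SS}. Any potential target of a $\text{d}_r$, $r\geq4$, is $a_\sigma$-divisible and $a_\sigma$-torsion-free on the relevant slope, because multiplication by $a_\sigma$ is injective on the $\Emr_4$-page in those ranges. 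Such a target is therefore detected by $\varphi$, and the differential must vanish. Classes like $\sfb_{n\rho+\sigma}$ and $u_{2\sigma}^2\sfb_{n\rho+\sigma}$ are treated similarly, since their potential targets lie in the $a_\sigma$-local part. Alternatively, $\sfb_{n\rho+\sigma}$ is a permanent cycle because it lies in filtration $0$ of a $\text{k}\r$-homology class detected in $\Hmr\ull{\zz}$-homology via $\text{k}\r\to\Hmr\ull{\zz}$. Indeed, the Hurewicz-type map from $\text{k}\r_\star(\Bmr_{\Cmr_2}\Sigma_{2+})$ to the zeroth slice is surjective onto the bottom row classes. I would prove this surjectivity using the cell structure: the bottom cells of $\Sigma^{n\rho}\p(\rho)_+$ have $\text{k}\r$-homology classes mapping onto the $\Hmr\ull{\zz}$ generators.

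The main obstacle is the negative-cone classes, such as $\tfrac{\theta_2}{u_{2\sigma}^j}\sfb_{n\rho}$ and the $\theta_3$-families, which are $a_\sigma$-torsion and invisible to $\varphi$. For these I would first try a pure degree argument. A $\text{d}_r$ changes slice filtration by $r$ in steps of $\rho$-multiples, that is, it adds powers of $\overline{v_1}$. The targets in $\overline{\Cmr}[\overline{v_1}]$ and $\overline{\Kmr}[\overline{v_1}]$ after $\text{d}_3$ are sparse; \Cref{fig:E4_C,fig:E4_K} show vanishing regions forming a "gap" as in $\text{k}\r_\star$. I expect that in every bidegree the source and target cannot both be nonzero for $r\geq 4$ (that is, $r=5,7,\dots$, since odd slices vanish). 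If some bidegree survives this check, I would fall back on the $\overline{\Dmr}$-module structure, writing the class as $\tfrac{2\theta_2}{2u_{2\sigma}^i}$ or $\tfrac{\theta_5}{u_{4\sigma}^ja_\sigma^k}$ (permanent cycles of $\text{k}\r_\star$) times a generator already shown to be a permanent cycle. Finally, I would check that the resulting permanent cycles exhaust $\Emr_4$, which gives collapse.
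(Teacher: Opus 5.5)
There is a genuine gap in your second step, and that step is where the real content of the theorem lies. Comparison along $\varphi$ can only detect a differential whose target is still nonzero on the corresponding page of $\textbf{ASliceSS}_{\text{k}\r^\Phi}(\Bmr_{\Cmr_2}\Sigma_2)$. By \Cref{prop:kRPhiBC2Sigma2SS}, that spectral sequence has $\Emr_4=\Emr_\infty=\ff_2[a_\sigma^{\pm}][u_{2\sigma}^2]\langle\sfb_{n\rho},\sfb_{n\rho+\sigma}\rangle_{n\geq0}$, which is concentrated in slice filtration $0$. It therefore says nothing about any $\text{d}_r$ with $r\geq 5$. Your supporting claim that the potential targets are $a_\sigma$-torsion-free on $\Emr_4$ is false. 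A positive-filtration class on $\Emr_4$ has the form $\overline{v_1}y$ with $\text{d}_3(y)=0$, and then $a_\sigma^3\overline{v_1}y=\text{d}_3(u_{2\sigma}y)$. So every such class is $a_\sigma^3$-torsion and maps to zero under $\varphi$. Localization suffices for $\text{d}_3$ in \Cref{lem:ASSSkRKDiffs} only because there the target is read off on $\Emr_3$, before the classes $a_\sigma^3\overline{v_1}y$ have been killed.

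Your fallback degree count cannot close the gap either. For $k\geq 2$ and $m\geq k+1$, the class $a_\sigma\sfb_{m\rho}$ (a $\text{d}_3$-cycle in filtration $0$) and the class $a_\sigma\sfb_{(m-k-1)\rho+\sigma}\overline{v_1}^{k}$ are both nonzero on $\Emr_4$. They sit exactly in the bidegrees of a possible $\text{d}_{2k+1}$; for example, $\text{d}_5(a_\sigma\sfb_{3\rho})=a_\sigma\sfb_{\sigma}\overline{v_1}^2$ is not excluded by degrees. These are the potential differentials the paper's proof singles out. It rules them out by an $a_\sigma$-torsion-exponent argument rather than by localization. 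On $\Emr_4$ the candidate target is killed by $a_\sigma^2$ but not by $a_\sigma$. Meanwhile, the Leibniz rule applied to products of the classes $a_\sigma\sfb_{j\rho}$ forces $a_\sigma\cdot\text{d}_r(a_\sigma\sfb_{m\rho})=\text{d}_r(a_\sigma^2\sfb_{m\rho})=0$. This uses induction on $m$, with base cases $\text{d}_r(a_\sigma\sfb_\rho)=\text{d}_r(a_\sigma\sfb_{2\rho})=0$, which do follow from degrees. Your argument lacks input of this kind, which uses the ring structure across different $\sfb_{m\rho}$ and not just the $\overline{\Dmr}$-module structure.

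Two smaller points. Your ``$2u_{2\sigma}$-type classes'' are zero in $\Cmr=\Hmr\ull{\zz}_\star/2$. Your Hurewicz-surjectivity argument for $\sfb_{n\rho+\sigma}$ is circular, since surjectivity onto filtration-$0$ classes is the same as their being permanent cycles. It is also unnecessary, because $\sfb_{n\rho+\sigma}$ has no possible targets for degree reasons.
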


\begin{proof}

    For degree reasons, and by comparison via the augmentation \eqref{eqn:augmentationMap}, there are no further differentials on $\overline{\text{D}}$ or $\overline{\text{C}}$. Again, degree reasons, together with the fact that $\overline{v_1}a_\sigma^3=0$, show that for $r \geq 4$, we have
    \begin{equation}
    \label{eq:dr_asigma_brho_0}
    \text{d}_r(a_\sigma{\sf{b}}_\rho) = \text{d}_r(a_\sigma {\sf{b}}_{2\rho}) = 0.
    \end{equation}
    For all $n \geq 3$ and $\ell\geq 0$, there is a potential differential
    \[
    \text{d}_{2n-1}(a_\sigma{\sf{b}}_{(2n-3 + \ell)\rho}) = a_\sigma{\sf{b}}_{\ell\rho + \sigma}\overline{v_1}^{2n-5}.
    \]
    Observe that
    \[
    \text{d}_{2n-1}(a_\sigma^2{\sf{b}}_{(2n-3 + \ell)\rho}) = a_\sigma{\sf{b}}_{(2n-4 + \ell)\rho}\text{d}_{2n-1}(a_\sigma{\sf{b}}_{\rho}) + \text{d}_{2n-1}(a_\sigma{\sf{b}}_{(2n-4 + \ell)\rho}).
    \]
    The first summand of the right hand side vanishes by \eqref{eq:dr_asigma_brho_0}, and the second one vanishes as $\text{d}_{2n-1}(a_\sigma{\sf{b}}_{(2n-4 + \ell)\rho})=0$ by induction using the Leibniz rule. Alternatively, we have
    \[
    \text{d}_{2n-1}(a_\sigma^2{\sf{b}_{(2n-3 + \ell)\rho}})=\text{d}_{2n-1}(a_\sigma{\sf{b}}_0)a_\sigma{\sf{b}}_{(2n-3 + \ell)\rho} + a_\sigma{\sf{b}}_0\text{d}_{2n-1}(a_\sigma{\sf{b}}_{(2n-3 + \ell)\rho}),
    \]
    and note that $\text{d}_{2n-1}(a_\sigma{\sf{b}}_0)=0$. Combining the above sums, we see that any potential value for $\text{d}_{2n-1}(a_\sigma{\sf{b}}_{(2n-3+\ell)\rho})$ is simple $a_\sigma$-torsion. However, the potential target is not simple $a_\sigma$-torsion, so the differential must be trivial. The Leibniz rule, together with the vanishing of all higher differentials on $\overline{\mathrm{D}}$, then implies that all higher differentials vanish as well, completing the proof.
    

\end{proof}

The product structure of the $\Emr_{\infty}$-page of $\textbf{ASliceSS}_{\text{k}\r}(\Bmr_{\Cmr_2}\Sigma_{2})$ can also be determined. To obtain this, we compare the $\text{k}\r$-computation with the $\Hmr\ull{\z}$-computation. The ring structure of $\Hmr\ull{\z}_\star({\Bmr_{\Cmr_2}\Sigma_{2+}})$ is established in \cref{thm:HZHomologyProductStructure}. Since the slice filtration functor $\text{Q}$ is lax monoidal (see \cref{rem: lax monidality}) and $\text{Q}^{-1}(\text{k}\r \wedge {\Bmr_{\Cmr_2}\Sigma_{2+}}) \simeq *$, the projection of $\text{k}\r \wedge {\Bmr_{\Cmr_2}\Sigma_{2+}}$ onto its zeroth slice induces, after passing to homotopy groups, a ring homomorphism 
\begin{equation} \label{eqn: ring map in kR homology compared to HZ-homology}
    \pi_{\star}(\text{k}\r \wedge {\Bmr_{\Cmr_2}\Sigma_{2+}}) \to \pi_{\star} \text{Q}^0 (\text{K}\r \wedge {\Bmr_{\Cmr_2}\Sigma_{2+}}) \to \pi_{\star} \text{Q}_0^0 (\text{K}\r \wedge {\Bmr_{\Cmr_2}\Sigma_{2+}}) \cong  \pi_{\star} (\Hmr\ull{\z} \wedge {\Bmr_{\Cmr_2}\Sigma_{2+}}).
\end{equation}
Since $\sfb_{n \rho + \sigma}, \sfb_{n \rho}$ are permanent cycles for all $n \geq 0$ and remain unchanged throughout the computation, their products are determined already in the 0-slice. Hence, the multiplicative relations among these generators in the 0-slice agree with their relations in the full theory; the only additional structure arises from adjoining the $\overline{v_1}$-tower.

\begin{proposition} \label{thm: ring str of kR BC2C2}
The ring structure on the $\Emr_{\infty}$-page of $\textbf{ASliceSS}_{\text{k}\r}(\Bmr_{\Cmr_2}\Sigma_{2})$, is determined by the following relations:
    \begin{enumerate}
        \item $\sfb_{n \rho+ \sigma} \cdot \sfb_{m \rho + \sigma} = 0$, for $m, n \geq 0$;
        \item $(\alpha \sfb_{n \rho}) \cdot (\beta\sfb_{m \rho}) = (1-\delta_n^m) \alpha\beta\cdot \sfb_{(m+n) \rho}$, where $\delta^m_n$ is the Kronecker delta, and $\alpha,\beta\in \overline{\Kmr}$;
        \item $(\alpha\cdot\sfb_{n\rho}) \cdot \sfb_{m\rho + \sigma} = \alpha\cdot\sfb_{(n+m)\rho + \sigma}$, where $\alpha\in \overline{\Kmr}$.
    \end{enumerate}
\end{proposition}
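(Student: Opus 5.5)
The plan is to use the multiplicativity of $\textbf{ASliceSS}_{\text{k}\r}(\Bmr_{\Cmr_2}\Sigma_2)$ (\Cref{rem: lax monidality}), together with the ring map \eqref{eqn: ring map in kR homology compared to HZ-homology} to the zeroth slice, to transport the relations of \Cref{thm:HZHomologyProductStructure} to the $\Emr_\infty$-page. First, I would describe the multiplicative structure of the $\Emr_2$-page. By \eqref{eqn:E2PageASSS} it is $\Hmr\ull{\zz}_\star(\Bmr_{\Cmr_2}\Sigma_{2+})[\overline{v_1}]$. Since the slice tower of $\text{k}\r$ is multiplicative with $\text{P}^{2m}_{2m}\text{k}\r \simeq \Sigma^{m\rho}\Hmr\ull{\zz}$, the pairing of slices $\text{Q}^{2m}_{2m}\wedge \text{Q}^{2m'}_{2m'}\to \text{Q}^{2(m+m')}_{2(m+m')}$ is the $\Hmr\ull{\zz}$-homology Pontryagin product tensored with $\overline{v_1}^m\cdot \overline{v_1}^{m'} = \overline{v_1}^{m+m'}$. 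Hence, as a ring, $\Emr_2$ is the polynomial extension of the ring computed in \Cref{thm:HZHomologyProductStructure}. The three relations therefore already hold on $\Emr_2$ for classes $\alpha,\beta\in \Kmr[\overline{v_1}]$.

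Second, I would pass to $\Emr_4=\Emr_\infty$ using \Cref{cor:E3PagekRASSS} and \Cref{thm:kRAssociatedGraded}. Each page is the homology of the previous one with respect to a derivation $\text{d}_3$, so the product on $\Emr_4$ is induced from that on $\Emr_3=\Emr_2$. The classes $\sfb_{n\rho}$ and $\sfb_{n\rho+\sigma}$ are themselves $\text{d}_3$-cycles: $\text{d}_3(\sfb_{n\rho+\sigma})=0$ for degree reasons, and $\text{d}_3$ vanishes on $a_\sigma\sfb_{n\rho}$ by \Cref{lem:ASSSkRKDiffs}. Consequently, a cycle representative of an element of $\overline{\Kmr}\langle\sfb_{n\rho}\rangle$ has the form $\tilde\alpha\,\sfb_{n\rho}$ with $\tilde\alpha$ a $\text{d}_3$-cycle in $\Kmr[\overline{v_1}]$, and similarly for $\overline{\Cmr}$. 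The product of two such representatives is computed on $\Emr_2$ by the first step. Relation (2), for instance, reads
\[
(\tilde\alpha\sfb_{n\rho})(\tilde\beta\sfb_{m\rho}) = (1-\delta_n^m)\tilde\alpha\tilde\beta\,\sfb_{(n+m)\rho}.
\]
The classes $[\tilde\alpha\tilde\beta]$ and $\tilde\alpha[\sfb_{(n+m)\rho+\sigma}]$ are then the images in $\overline{\Kmr}$ and $\overline{\Cmr}$ of the corresponding products, and this yields (1)–(3).

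The main obstacle is checking that the product of classes is well defined on the homology modules $\overline{\Kmr},\overline{\Cmr}$. One point to verify is that $\alpha\beta\in \Kmr[\overline{v_1}]$ is again a $\text{d}_3$-cycle whose class is independent of the representatives; this follows from the Leibniz rule. The more delicate point is the identification of the product with the $\Hmr\ull{\zz}$-product when $\alpha,\beta$ involve $\overline{v_1}$ and the negative cone (e.g. $\theta_2/u_{2\sigma}^j$). Here I would check that the formulas of \Cref{thm:HZHomologyProductStructure} are $\Hmr\ull{\zz}_\star$-bilinear, so that they extend $\overline{v_1}$-linearly. As a consistency check, I would use that the ring map \eqref{eqn: ring map in kR homology compared to HZ-homology} recovers exactly the filtration-zero ($\overline{v_1}^0$) part of these relations. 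Finally, I would remark that these are relations on the associated graded only; hidden multiplicative extensions are not addressed, in keeping with the statement being about the $\Emr_\infty$-page.
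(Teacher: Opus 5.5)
Your argument is correct, but it gets the relations from a different mechanism than the paper's proof.

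\textbf{How the two proofs differ.} The paper works through the ring map $F$ to the zeroth slice. Its kernel is exactly the positive-filtration part, so $F$ embeds the filtration-zero part of $\Emr_\infty$ as a subring of $\Hmr\ull{\zz}_\star(\Bmr_{\Cmr_2}\Sigma_{2+})$. The relations among filtration-zero classes are then read off from \Cref{thm:HZHomologyProductStructure}. Classes of positive filtration are killed by $F$, and the paper covers them only by the remark that the remaining structure comes from ``adjoining the $\overline{v_1}$-tower.''

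You instead use the multiplicativity of the spectral sequence itself: $\Emr_2\cong\Hmr\ull{\zz}_\star(\Bmr_{\Cmr_2}\Sigma_{2+})[\overline{v_1}]$ as rings, $\Emr_4=H(\Emr_3,\text{d}_3)$ as rings, and $\Emr_4=\Emr_\infty$ by \Cref{thm:kRAssociatedGraded}. This treats $\tilde\alpha\,\sfb_{n\rho}$ with $\tilde\alpha\in\Kmr[\overline{v_1}]$ uniformly in the power of $\overline{v_1}$. That is exactly what the relations with $\alpha,\beta\in\overline{\Kmr}$ require, and exactly what $F$ cannot see, so demoting $F$ to a consistency check is the right call. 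The paper's route only buys not needing the pairings between higher slices. Yours uses the ring identification \eqref{eqn:E2PageASSS}, which the paper asserts anyway.

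\textbf{One slip.} For $n\geq 1$, $\sfb_{n\rho}$ is not itself a class on $\Emr_2$: it supports $\text{d}_1(\sfb_{n\rho})=2\sfb_{(n-1)\rho+\sigma}$ in \eqref{eqn:HZSS}, and $1\notin\Kmr$. So ``$\sfb_{n\rho}$ is a $\text{d}_3$-cycle'' is not meaningful and cannot justify your ``consequently.'' What you need is $\text{d}_3(\tilde\alpha\,\sfb_{n\rho})=\text{d}_3(\tilde\alpha)\,\sfb_{n\rho}$ for $\tilde\alpha\in\Kmr[\overline{v_1}]$, which is \Cref{cor:E3PagekRASSS}. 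It also follows in one line within your framework:
\begin{itemize}
\item $\Kmr=a_\sigma\Hmr\ull{\zz}_\star$, since the $\theta_3$-family is $a_\sigma$-divisible;
\item hence $\tilde\alpha\,\sfb_{n\rho}=(\alpha'\sfb_0)\cdot(a_\sigma\sfb_{n\rho})$ for some $\alpha'\in\Hmr\ull{\zz}_\star[\overline{v_1}]$;
\item the Leibniz rule together with $\text{d}_3(a_\sigma\sfb_{n\rho})=0$ from \Cref{lem:ASSSkRKDiffs} gives the claim.
\end{itemize}

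\textbf{A caveat shared with the paper.} Both arguments import the relations of \Cref{thm:HZHomologyProductStructure} verbatim, and your formal argument would carry any corrected version through unchanged. This matters: primitivity of $\sfe_{\Hmr\ull{\ff}_2}$ forces $\sfb_{n\rho}\cdot\sfb_{m\rho}=\binom{n+m}{n}\sfb_{(n+m)\rho}$ in $\Hmr\ull{\ff}_2$-homology, a divided power structure as in the classical case. This already differs from $(1-\delta_n^m)\sfb_{(n+m)\rho}$ for $(n,m)=(1,3)$.
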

\begin{proof}
    (1) is straightforward by the previous discussion. Write
    \[
        F: (\text{k}\r)_{\star}({\Bmr_{\Cmr_2}\Sigma_{2+}}) \to \Hmr\ull{\z}_{\star}({\Bmr_{\Cmr_2}\Sigma_{2+}})
    \]
    for the desired ring map in \eqref{eqn: ring map in kR homology compared to HZ-homology}. Because $\text{k}\r$ is connective and $\Hmr\ull{\z}$ is its 0-slice, the map on $\text{RO}(\Cmr_2)$-graded coefficients operates by taking $\overline{v_1} \mapsto 0$. Together with the discussion above, 
    \[
        F(\overline{v_1}) = 0, \quad F(\sfb_{V}) = \sfb_V,
    \]
    where $V = n \rho, n \rho + \sigma$, $m, n \geq 0$. Note that $\overline{\Kmr} = \Hmr_*(\Kmr[\overline{v_1}], \text{d}_3)$. It follows that $F$ acts as the identity on this subalgebra when restricted to $\overline{v_1}^m$ with $m =0$. Therefore, $\Emr_{\infty}(\textbf{ASliceSS}_{\text{k}\r}(\Bmr_{\Cmr_2}\Sigma_{2}))$ inherits the ring structure of $\Hmr\ull{\z}_{\star}({\Bmr_{\Cmr_2}\Sigma_{2+}})$, subject to the corresponding restrictions on the coefficients, yielding (2) and (3).
\end{proof}

\section{Immersions of $\Cmr_2$-Projective Spaces}\label{sec:immersions}

We now apply our K-theory computations to study immersions of $\p(n\rho)$ into multiplies of the regular representation $\rho$. Nonequivariantly, the study of immersions of manifolds in Euclidean spaces is connected to bundle theory by the following theorem of Hirsch:

\begin{thm}[{\cite{HirschImmersions}}]\label{thm:HirschImmersions}
    The tangent bundle functor $\text{T}$ induces a weak homotopy equivalence 
    \[
        \begin{tikzcd}
            \text{Imm}(M, \r^{n+k}) \ar[r, "\text{T}"] & \text{BundMono}(\Tmr M, \Tmr \r^{n+k}),
        \end{tikzcd}
    \]
    where $\text{Imm}(M, \r^{n+k})$ is the space of immersions $M\looparrowright \r^{n+k}$, and $\text{BundMono}(\Tmr M, \Tmr \r^{n+k})$ is the space of bundle monomorphisms. 
\end{thm}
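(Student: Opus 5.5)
Since this statement is Hirsch's classical theorem, the plan is the standard Smale--Hirsch argument: \emph{handle induction}, combined with a \emph{fibration property} for restriction of immersions. We assume throughout $k\geq 1$. For open $M$ with $k=0$ the same argument works, but in that case the cores of handles never reach the top dimension. Write $N=n+k$, and let $\mathrm{Imm}(A,\r^N)$ denote germs of immersions along a compact subset $A\subset M$, and similarly $\mathrm{BundMono}(\Tmr M|_A,\Tmr\r^N)$. The tangent map $\Tmr$ is natural with respect to restriction. So it suffices to show two things: both restriction maps $\mathrm{Imm}(A',\r^N)\to \mathrm{Imm}(A,\r^N)$ and $\mathrm{BundMono}(\Tmr M|_{A'})\to \mathrm{BundMono}(\Tmr M|_{A})$ are Serre fibrations when $A'$ is obtained from $A$ by attaching a handle, and $\Tmr$ is a weak equivalence on fibers.

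\textbf{Step 1 (base case).} For a disk $D^n$, differentiation at the origin gives a map $\mathrm{Imm}(D^n,\r^N)\to V_{N,n}$ to the Stiefel manifold. It is a homotopy equivalence, by shrinking the immersion radially to its linearization and composing with the straight-line homotopy. Likewise $\mathrm{BundMono}(\Tmr D^n,\Tmr\r^N)\simeq \Map(D^n,V_{N,n})\times \Map(D^n,\r^N)\simeq V_{N,n}$. The two equivalences are compatible with $\Tmr$.

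\textbf{Step 2 (fibration property).} For a handle $H=D^i\times D^{n-i}$ attached along $\partial D^i\times D^{n-i}$, show that restriction of immersions is a Serre fibration. This is Smale's covering homotopy lemma. Given a family of immersions on a neighborhood of the attaching region and a lift at time $0$, extend the deformation using a parametrized isotopy-extension argument. Openness of the immersion condition lets one cut off the deformation near the attaching region. The bundle side is a fibration by the homotopy extension property of CW pairs.

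\textbf{Step 3 (fibers).} Compare the fibers: immersions of $H$ extending a fixed germ along $\partial D^i\times D^{n-i}$ versus formal solutions with the same boundary data. By shrinking the $D^{n-i}$ factor, these fibers deform to germs along the core $D^i$ relative to $\partial D^i$. The problem thus reduces to \emph{Smale's relative theorem}: the space of immersion-germs of $D^i\times(\text{small disk})$ rel $\partial$ is weakly equivalent, via $\Tmr$, to the space of maps $(D^i,\partial D^i)\to (V_{N,n},\ast)$, i.e.\ to $\Omega^iV_{N,n}$. I would prove this by induction on $i$, slicing $D^i=D^{i-1}\times[0,1]$. Each step reduces to the $1$-dimensional statement: a path of $(i-1)$-families of formal data can be realized by immersions. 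That statement follows from a path-lifting (``compression'') argument for curves, in the spirit of Whitney--Graustein.

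\textbf{Step 4 (assembly).} Once Steps 2 and 3 are in hand, the five lemma on the long exact sequences of homotopy groups of the two fibrations shows that $\Tmr$ is a weak equivalence on $A'$ whenever it is one on $A$. Inducting over a handle decomposition of $M$ (finite if $M$ is compact, and passing to the limit via a $\lim^1$ argument otherwise) completes the proof.

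The main obstacle is Step 3 in the top-dimensional case $i=n$. There the core has no normal room inside $M$, and the extra ambient codimension $k\geq 1$ must be used to realize loops of formal data by genuine immersions. This is exactly where the hypothesis $N>n$ enters, and I would treat it by the explicit $1$-parameter construction in the normal direction provided by $k\geq1$.
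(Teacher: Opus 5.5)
The paper gives no proof of this statement; it is quoted from Hirsch. Your architecture is essentially the classical Smale--Hirsch scheme: induction over a decomposition, restriction maps as fibrations, fiber comparison via Smale's theorem for disks, the five lemma, and $\lim^1$ for noncompact $M$. You are also right to impose $k\geq 1$, which the quoted statement omits. For closed $M$ and $k=0$ it is false: $S^1$ has trivial tangent bundle but admits no immersion into $\mathbb{R}$.

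\textbf{The gap is in Step~2, and it comes from misplacing where $k\geq 1$ is used.} The mechanism you describe cannot produce a Serre fibration.
\begin{itemize}
\item Isotopy extension does not apply, because the $f_t$ are immersions, not embeddings.
\item Cutting off via $g_t(x)=f_{t\varphi(x)}(x)$ adds the term $t\,\partial_s f\otimes d\varphi$ to the differential. Openness therefore only lets you lift for times bounded by roughly (immersion margin)$\times$(collar width)$/\sup|\partial_s f|$.
\item Iterating in nested collars does not help, since their total width is fixed. So you get only a microfibration.
\end{itemize}

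For handles of index $i<n$, there are standard repairs that exploit the room in $M$ normal to the core. For the $n$-handle, however, no argument that ignores $k$ can work, because the claim is false for $k=0$:
\begin{itemize}
\item By Whitney--Graustein, the boundary germ of the standard disk in $\mathbb{R}^2$ is regularly homotopic to the germ of a rotation-number-one curve. Take the curve obtained from the circle by adding one inward and one outward kink.
\item The outward kink encloses a region of winding number $-1$.
\item Any extension of this germ over $D^2$ is an orientation-preserving immersion, whose local degree equals the winding number and so must be nonnegative. Hence no extension exists.
\item So restriction across the top handle already fails path lifting.
\end{itemize}

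The covering homotopy property across top handles is therefore the hard content of Smale's lemma, and its proof must use the extra normal direction. In convex-integration terms, the admissible values of a single partial derivative form $\mathbb{R}^N$ minus an $(n-1)$-plane. This set is connected and ample exactly when $k\geq 1$. You need either to prove this step or to cite it in that form; it cannot be treated as soft.

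The same observation shows that your curve-compression in Step~3 uses $k\geq 1$ at every slice and for every index $i$, not only for $i=n$. Consequently your parenthetical claim that the argument also covers open $M$ with $k=0$ does not follow. That case needs the room in $M$ normal to the core (holonomic approximation), not ambient room.
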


Given an immersion $f:M\looparrowright\r^{n+k}$, it is clear that the differential $df:\Tmr M\to \Tmr \r^{n+k}$ is a bundle monomorphism. If a bundle monomorphism exists, it necessitates the existence of a rank $k$ bundle $\nu$ over $M$ such that
\[
    \Tmr M\oplus \nu\cong \epsilon_{n+k}
\]
as bundles over $M$. Hirsch's theorem allows us to reverse this process. In particular, if $\xi$ is a bundle of rank $k$ over $M$ such that $\Tmr M\oplus \xi\cong \epsilon_{n+k}$, but where $\xi$ is not necessarily the normal bundle associated to an immersion, one can produce an immersion of $M$ into $\r^{n+k}$. If such a bundle exists and given a smooth map $f:M\to \r^{n+k}$, 
\[
    \begin{tikzcd}
        \Tmr M \ar[r, hook, ""] \ar[d, ""] & \Tmr M\oplus \xi \ar[r, "\cong"]\ar[d, ""] & M\times \r^{n+k} \ar[rrr, "(x{,}v)\mapsto (f(x){,} v)"]\ar[d, ""] &&& \r^{n+k}\times \r^{n+k}\ar[d, ""]\\
        M \ar[r, "\operatorname{Id}"] & M \ar[r, "\operatorname{Id}"] & M \ar[rrr, "f"] &&& \r^{n+k}
    \end{tikzcd}
\]
is a bundle monomorphism. If we denote the map of total spaces by $F$, it is almost certain that $df$ is not equal to $F$. However, Hirsch's theorem says that we can always find $g:M\hookrightarrow \r^{n+k}$ so that $dg = F$.

So to produce an immersion $M\looparrowright \r^{n+k}$, it suffices to find a bundle $\xi$, as above. Atiyah then framed finding such a bundle as a problem in K-theory by observing that if $\xi$ is classified by $M\xrightarrow[]{\nu} \operatorname{BO}(k)$, the corresponding stable class obtained by composing with the map $\operatorname{BO}(k)\to \operatorname{BO}$ is a representative for the stable normal bundle for $M$, denoted $-\Tmr M$. For $k$ sufficiently large, one can always immerse $M$ into $\r^{n+k}$ by Whitney's theorem. So it is always possible to find a bundle $\xi$ which represents $-\Tmr M$. 
Atiyah then argued that one can split off trivial summands from such a representative of $-\Tmr M$ to produce smaller rank bundles $\xi$. He defines the smallest such rank, as follows.  

\begin{definition}[\cite{AtiyahImmersions}]
    The \textbf{geometrical dimension} of a (real) virtual bundle $\tau$ over $M$, denoted $\text{gdim}(\tau)$, is the smallest integer $k$ such that $\tau \oplus \epsilon_k$ is equivalent to a non-virtual vector bundle over $M$. 
\end{definition}

When $\tau = -\Tmr M$, the geometrical dimension is bounded above by the dimension of $M$ because $-\Tmr M$ can always be modeled as a vector bundle over $M$ of sufficiently large rank, and one can split off trivial vector bundles from such a model until one reduces the rank to at most the dimension of $M$. The following is then a formal consequence of all these observations.

\begin{thm}[\cite{AtiyahImmersions}]\label{thm:AtiyahImmersion}
    There is an immersion $M\looparrowright \r^{n+k}$ if and only if $\text{gdim}(-\Tmr M)\leq k$. 
\end{thm}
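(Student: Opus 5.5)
The plan is to derive \Cref{thm:AtiyahImmersion} formally from Hirsch's theorem (\Cref{thm:HirschImmersions}) together with the definition of geometrical dimension. Throughout, $M$ is a closed smooth $n$-manifold and $\epsilon_j$ denotes the trivial rank-$j$ bundle. I will prove the two implications separately.

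\emph{Necessity.} Suppose $f\colon M\looparrowright \r^{n+k}$ is an immersion. Then $df$ is a bundle monomorphism $\Tmr M\to f^*\Tmr\r^{n+k}\cong \epsilon_{n+k}$. Its cokernel, the normal bundle $\nu_f$, is an honest rank-$k$ bundle with $\Tmr M\oplus \nu_f\cong \epsilon_{n+k}$; to see this, choose a metric and take the orthogonal complement. In $\widetilde{\mathrm{KO}}(M)$, or in virtual bundles of virtual rank $-n$, this reads $-\Tmr M = \nu_f - \epsilon_{n+k}$. Hence $-\Tmr M\oplus \epsilon_{n+k}$ is the genuine bundle $\nu_f$, which shows that $\mathrm{gdim}(-\Tmr M)\le k$, with the normalization that geometrical dimension records the rank of the resulting genuine bundle.

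\emph{Sufficiency.} Suppose $\mathrm{gdim}(-\Tmr M)\le k$. Then there is a genuine bundle $\xi$ of rank $k$ whose stable class is $-\Tmr M$ (up to adding trivial summands). Stably this gives $\Tmr M\oplus\xi\oplus\epsilon_N\cong \epsilon_{n+k+N}$ for some $N$. The key step is to remove the extra $\epsilon_N$ and obtain an unstable isomorphism $\Tmr M\oplus \xi\cong\epsilon_{n+k}$. For this I will invoke the classical cancellation theorem (\cref{thm:ClassicalCancellation}): both sides have rank $n+k>\dim M$ provided $k\ge 1$, so stable isomorphism implies genuine isomorphism. (The case $k=0$ can be treated separately, or excluded since it only concerns parallelizable $M$ of codimension zero.)

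Given $\Tmr M\oplus\xi\cong\epsilon_{n+k}$, the composite $\Tmr M\hookrightarrow \Tmr M\oplus\xi\cong M\times\r^{n+k}\to \Tmr\r^{n+k}$ over any smooth map $M\to\r^{n+k}$, as in the diagram preceding the theorem, is a bundle monomorphism. By \Cref{thm:HirschImmersions}, $\pi_0$ of $\mathrm{Imm}(M,\r^{n+k})$ surjects onto $\pi_0$ of the space of bundle monomorphisms. So this monomorphism is homotopic to $dg$ for some immersion $g$. Hirsch's theorem requires positive codimension, or an open $M$; for closed $M$ with $k\ge1$ this holds.

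The main obstacle is the cancellation step: one must check that the ranks are in the stable range for \cref{thm:ClassicalCancellation}. One must also align the convention in the definition of $\mathrm{gdim}$ so that ``$\tau\oplus\epsilon_k$ genuine'' exactly matches a rank-$k$ complement.
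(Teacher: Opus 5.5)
Your argument is correct and follows the route the paper intends: the paper treats the theorem as a formal consequence of Hirsch's theorem, the normal bundle of an immersion, and cancellation, which passes from $\Tmr M\oplus\xi\oplus\epsilon_\ell\cong\epsilon_{n+k+\ell}$ to $\Tmr M\oplus\xi\cong\epsilon_{n+k}$. Two small corrections: the case $k=0$ must simply be excluded rather than ``treated separately'', since for closed $M$ the statement is false there (for $n\ge 1$, $\mathrm{gdim}(-\Tmr S^n)=0$ but $S^n$ does not immerse in $\r^n$); and for $k=1$ you need the sharp cancellation bound $\operatorname{rank}>\dim M$, which holds because $\mathrm{BO}(r)\to\mathrm{BO}(r+1)$ is $r$-connected, whereas \cref{thm:ClassicalCancellation} as stated in the paper assumes $\operatorname{rank}>\dim M+1$ and so only covers $k\ge 2$.
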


When $M=\r\p^n$, one can obtain an upper bound of $\text{gdim}(-\Tmr M)$ by recalling
\begin{equation}\label{eqn:tangentbundleRP^nandTautological}
    \Tmr \r\p^n\oplus \epsilon_1\cong (n+1)\gamma_1,
\end{equation}
where $\gamma_1$ is the tautological line bundle over $\r\p^n$, and by noting that $[\gamma_1]$ is $2$-torsion:
\begin{thm}[{\cite[Theorem 1]{FujiiKO}}]\label{thm:KO(RP^n)}
    For $n\geq 1$, 
    \[
        \widetilde{\text{KO}}^0(\r\p^n)\cong \zz/2^{\phi(n)}\langle[\gamma_1-\epsilon_1]\rangle,
    \]
    where $\phi(n)$ is the number of integers $s$ such that $0<s\leq n$ and $s\equiv 0, 1, 2, 4\mod{8}$.
\end{thm}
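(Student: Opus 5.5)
We prove Fujii's theorem by computing $\widetilde{\mathrm{KO}}^0(\r\p^n)$ with the Atiyah--Hirzebruch spectral sequence for the skeletal filtration of $\r\p^n$, and then showing that the result is cyclic on the reduced tautological class $x=[\gamma_1-\epsilon_1]$.

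\textbf{Step 1: the $E_2$-page.} The $E_2$-page is $\widetilde{\Hmr}^p(\r\p^n;\mathrm{KO}^{-p})$ in total degree $0$. The coefficient groups in the relevant degrees are $\mathrm{KO}^{-p}=\pi_p\mathrm{KO}$, namely $\zz,\zz/2,\zz/2,0,\zz,0,0,0$ for $p\equiv 0,\dots,7 \pmod 8$. The cohomology of $\r\p^n$ is as follows:
\begin{itemize}
\item with $\zz$-coefficients, $\widetilde{\Hmr}^p$ is $\zz/2$ for even $0<p<n$ (and for $p=n$ even), and is $\zz$ in the top degree only when $n$ is odd;
\item with $\zz/2$-coefficients, $\widetilde{\Hmr}^p\cong\zz/2$ for all $0<p\le n$.
\end{itemize}
So the possible contributions in total degree $0$ come from $p\equiv 0,1,2,4 \pmod 8$, each a $\zz/2$. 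The exception is a $\zz$ in degree $p=n$ when $n\equiv 0,4\pmod 8$ is odd, which is impossible. Hence every entry is $\zz/2$, there are exactly $\phi(n)$ of them, and so $|E_2|\le 2^{\phi(n)}$.

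\textbf{Step 2: collapse in total degree $0$.} I would show that none of these $\zz/2$'s support or receive differentials. I expect this step, together with Step 3, to be the main obstacle. My first attempt would be to compare with $\r\p^\infty$, for which $\widetilde{\mathrm{KO}}^0(\r\p^\infty)$ is known by Atiyah--Segal to be the completion $\zz_2^\wedge$ (generated by $x$, via $\mathrm{RO}(\Sigma_2)$). The filtration quotients of $\zz_2^\wedge$ are all $\zz/2$, one for each $p\equiv 0,1,2,4\pmod 8$. This forces every such class on $E_2$ for $\r\p^\infty$ to survive, and restriction to $\r\p^n$ is surjective on $E_2$ in these degrees. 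Alternatively, one can identify $d_2$ with $Sq^2$ (composed with reduction) and check directly that it vanishes on the relevant classes. So $|\widetilde{\mathrm{KO}}^0(\r\p^n)|=2^{\phi(n)}$.

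\textbf{Step 3: cyclicity.} Since $\gamma_1\otimes\gamma_1$ is trivial, $x^2=-2x$, and the subring generated by $x$ is $\zz[x]/(x^2+2x)$; the group $\langle x\rangle$ is therefore cyclic. To see that it is all of $\widetilde{\mathrm{KO}}^0(\r\p^n)$, note that $\widetilde{\mathrm{KO}}^0(\r\p^\infty)\cong\zz_2^\wedge\langle x\rangle$ surjects onto $\widetilde{\mathrm{KO}}^0(\r\p^n)$. Surjectivity holds because the $\lim^1$ and Mittag-Leffler issues vanish and $E_\infty$ restricts surjectively by Step 2. Hence $\widetilde{\mathrm{KO}}^0(\r\p^n)$ is cyclic, generated by $x$.

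Combining the order $2^{\phi(n)}$ from Step 2 with cyclicity from Step 3 gives $\widetilde{\mathrm{KO}}^0(\r\p^n)\cong\zz/2^{\phi(n)}$ generated by $x$, which is the statement of \Cref{thm:KO(RP^n)}. As a check, for $n=1$ this gives $\zz/2$: $x$ is the Möbius class and $2x=0$ since $\r\p^1=S^1$ and $\widetilde{\mathrm{KO}}^0(S^1)=\zz/2$.
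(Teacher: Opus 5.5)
The paper gives no proof of this statement; it is quoted from Fujii (the computation goes back to Adams's \emph{Vector fields on spheres}), so your argument has to stand on its own. There is a genuine gap. Step 1 is fine and gives the upper bound $|\widetilde{\mathrm{KO}}^0(\r\p^n)|\le 2^{\phi(n)}$. Step 2 is supposed to supply the matching lower bound, which is the whole content of the theorem, and it does not. Step 3 inherits the gap, because its surjectivity claim rests on Step 2.

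\textbf{Why Step 2 fails.}
\begin{itemize}
\item Knowing abstractly that $\widetilde{\mathrm{KO}}^0(\r\p^\infty)\cong\zz_2^\wedge$ does not determine its skeletal filtration. The filtration subgroups are closed, hence of the form $2^{a(p)}\zz_2^\wedge$.
\item Step 1 shows that $a$ can increase by at most one at each step, and only at $p\equiv 0,1,2,4 \pmod 8$. The abstract isomorphism only gives $a(p)\to\infty$.
\item Your assertion that $a$ jumps at \emph{every} such $p$ is equivalent to $x|_{\r\p^{p-1}}$ having order $2^{\phi(p-1)}$ for all $p$. That is the theorem itself, so the argument is circular.
\item Your fallback, checking that $d_2=Sq^2\circ\rho$ vanishes, controls only one differential. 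There is also $d_3=\beta Sq^2$ out of the $\eta^2$-row. It is nonzero here, for instance on $\eta^2 w^{8k+3}$, which is a potential target of a $d_3$ from the class in position $8k$.
\item There may be longer differentials, and there are \emph{incoming} differentials from the total-degree $-1$ line. That line is far from zero on $E_2$: it contains $\eta w^{8k}$, $\eta^2 w^{8k+1}$, and a top $\zz$ when $n\equiv 3,7 \pmod 8$.
\item Even granting survival on $\r\p^\infty$, naturality only shows that the restricted classes are permanent cycles on $\r\p^n$. It does not show they are not boundaries there.
\end{itemize}

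\textbf{How to close the gap.} You can either finish the spectral sequence computation or import a lower bound.

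For the first option, work on $\r\p^\infty$ using $Sq^2 w^p=\binom{p}{2}w^{p+2}$ and $d_3=\beta Sq^2$.
\begin{itemize}
\item $d_2$ kills every total-degree $-1$ class except $\eta^2 w$. That class detects $\eta x$, so it is a permanent cycle.
\item $d_2$ and $d_3$ kill the entire total-degree $1$ line by $E_4$.
\item $d_2$ and $d_3$ vanish on the total-degree $0$ line. The possible $d_3$ into $\eta^2 w^{8k+3}$ is zero because that class itself supports a nonzero $d_3$.
\item Hence nothing enters or leaves the total-degree $0$ line.
\item To pass to $\r\p^n$: outgoing differentials vanish by naturality. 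On the total-degree $-1$ line the same $d_2$'s occur below the top cell, and the extra top class cannot support differentials.
\end{itemize}
With this in place your Step 3 goes through.

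For the second option, follow Adams. The complexification $c(x)$ generates $\widetilde{\mathrm{K}}^0(\r\p^n)\cong\zz/2^{\lfloor n/2\rfloor}$, so $x$ has order at least $2^{\lfloor n/2\rfloor}$. This equals $2^{\phi(n)}$ for $n\equiv 0,6,7 \pmod 8$. The residues $n\equiv 1,\dots,5 \pmod 8$, where $\phi(n)=\lfloor n/2\rfloor+1$, need a separate argument for the last factor of $2$. Once $x$ is known to have order $2^{\phi(n)}$, cyclicity follows from the count in Step 1 alone.
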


In $\widetilde{\operatorname{KO}}^0(\r\p^n)$,
\[
    [-\Tmr \r\p^n] = [-(n+1)\gamma_1] = [(2^{\ell\cdot\phi(n)}-(n+1))\gamma_1]
\]
by \eqref{eqn:tangentbundleRP^nandTautological} and \Cref{thm:KO(RP^n)}, where $\ell$ is the smallest integer such that $2^{\ell\cdot\phi(n)}-(n+1)>0$. The geometrical dimension of $[-\Tmr \r\p^n]$ is therefore bounded above by $2^{\ell\cdot \phi(n)}-(n+1)$. By \Cref{thm:AtiyahImmersion}, there is an immersion $\r\p^n\to \r^{2^{\ell\cdot \phi(n)} - (n+1)}$.

\subsection{Immersions of $\Gmr$-manifolds} \

Equivariantly, studying immersions into Euclidean space becomes a more delicate problem, as one can now consider immersions of smooth $\Gmr$-manifolds $M$ into $\Gmr$-representations $V$, where $\Gmr$ is any compact Lie group. If $M$ immerses into $V$, then applying $\Hmr$-fixed points for any $\Hmr\subset \Gmr$ yields an immersion $M^\Hmr$ into $V^{\Hmr}$. 
In \cite{Bierstone}, Bierstone identifies the correct class of bundle monomorphisms to study when generalizing \Cref{thm:HirschImmersions}.
Suppose $F:\Tmr M\to \Tmr V$ is a $\Gmr$-equivariant bundle monomorphism, and let $f:M\to V$ be the map induced by $F$. 

Define the subspace
\[
\text{BundMon}_\Gmr^*(\Tmr M, \Tmr V)\subset \text{BundMon}_\Gmr(\Tmr M,\Tmr V)
\]
of $\Gmr$-equivariant bundle monomorphisms $F:\Tmr M\to \Tmr V$ such that $F_x|_{\Tmr(\Gmr x)_x}$ is the induced map from $\Gmr x$ onto $\Gmr f(x)$ for $x \in M$. We refer to such $F$ as a {\bf geometric bundle monomorphisms}.

\begin{thm}\cite{Bierstone}\label{thm:BierstoneImmersion}
    Suppose $\dim M^\Hmr\leq \dim V^\Hmr$ for all $\Hmr\subset \Gmr$. Then the tangent space functor induces a weak homotopy equivalence
    \[
        \begin{tikzcd}
            \text{Imm}_\Gmr(M, V) \ar[r, "\text{T}"] & \text{BundMon}^*_\Gmr(\Tmr M, \Tmr V).
        \end{tikzcd}
    \]
\end{thm}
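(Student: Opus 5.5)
The plan is to follow the proof of Hirsch's theorem (\Cref{thm:HirschImmersions}), now equivariantly, via the $h$-principle for open, $\Gmr$-invariant differential relations. This is Bierstone's route.

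First I would set up the relation. Let $J^1_\Gmr(M,V)$ be the $\Gmr$-equivariant $1$-jet bundle. Consider the subspace $\mathcal{R}\subset J^1(M,V)$ of jets $(x,y,L)$ such that $L\colon \Tmr_xM\to \Tmr_yV$ is injective and restricts to the differential of the orbit map $\Gmr x\to \Gmr y$ on $\Tmr_x(\Gmr x)$. Its $\Gmr$-equivariant sections are exactly the geometric bundle monomorphisms, and its holonomic equivariant sections are the equivariant immersions. So the theorem says that $\mathcal{R}$ satisfies the parametric equivariant $h$-principle.

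Next I would reduce to orbit types by induction over the stratification of $M$. I would use a $\Gmr$-invariant handle decomposition adapted to the isotropy stratification, as in Bierstone's equivariant Smale--Hirsch theory. Order the orbit types $(\Hmr)$ compatibly with inclusion and work from the most singular strata outward. Near an orbit $\Gmr x$ with isotropy $\Hmr$, the slice theorem identifies a neighborhood with $\Gmr\times_\Hmr D(N_x)$. An equivariant immersion there is determined by an $\Hmr$-equivariant immersion of the slice $D(N_x)$ into the normal slice of $\Gmr f(x)$ in $V$. The geometric condition on $F$ is precisely what makes this restriction to slices work: it forces $F$ to be compatible with the orbit directions.

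Then comes extension over each stratum. On the fixed-point stratum of a given isotropy $\Hmr$, the problem reduces to a non-equivariant relative Hirsch problem for $M^\Hmr\to V^\Hmr$, twisted by the $W\Hmr=N\Hmr/\Hmr$-action, which is free on the open stratum. There I would apply Gromov's $h$-principle for open $\operatorname{Diff}$-invariant relations on manifolds with free actions, by passing to the quotient. This is the relative, parametric Hirsch theorem. The dimension hypothesis $\dim M^\Hmr\leq \dim V^\Hmr$ is used in two places. It guarantees that monomorphisms of the relevant fixed bundles can exist, so that the fibers of $\mathcal{R}$ over each stratum are nonempty. It also makes the relevant Stiefel-type fibers have the connectivity needed for the relative extension from a neighborhood of the singular strata. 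The case of equality $\dim M^\Hmr=\dim V^\Hmr$ corresponds to the open-manifold clause of Hirsch's theorem. I would handle it as Hirsch does, using the compactness of $M$, or by noting that the strata in question are open in $M^\Hmr$.

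Finally, gluing the stratumwise equivalences by induction gives a weak equivalence $\text{Imm}_\Gmr(M,V)\to \text{BundMon}^*_\Gmr(\Tmr M,\Tmr V)$.

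I expect the main obstacle to be the equivariant normal-form step. One has to show that a geometric bundle monomorphism defined near the singular strata can be deformed, through geometric monomorphisms, into the differential of an equivariant immersion on a tubular neighborhood, compatibly in families. For this I would first try equivariant linearization via the slice theorem, combined with the $\Hmr$-equivariant Stiefel manifold fibrations.
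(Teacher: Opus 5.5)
The paper does not prove this statement; it is quoted from Bierstone. Your outline has the right general shape for that kind of argument: an equivariant $h$-principle for the geometric-monomorphism relation, proved by induction over orbit-type strata, using slices near the singular strata and a relative non-equivariant Hirsch--Gromov argument on each stratum where $W\Hmr$ acts freely.

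The genuine gap is your treatment of the equality case $\dim M^\Hmr=\dim V^\Hmr$. Hirsch's equidimensional clause requires $M$ to be \emph{open}, i.e.\ to have no closed components. Compactness is exactly what makes it fail, so it cannot be ``handled as Hirsch does, using the compactness of $M$''.

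In fact the statement with $\leq$ is false. Take $\Gmr$ trivial, $M=T^2$ and $V=\r^2$. Since $T^2$ is parallelizable, $\text{BundMon}(\Tmr T^2,\Tmr\r^2)\neq\emptyset$. On the other hand $\text{Imm}(T^2,\r^2)=\emptyset$: an equidimensional immersion is an open map, so a closed surface would have open, compact, nonempty image in $\r^2$. The same thing happens equivariantly, e.g.\ for the free half-translation action of $\Cmr_2$ on $T^2$ with $V=2\sigma$.

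Your fallback, that the strata are open in $M^\Hmr$, does not rescue this. A compact component of $M^\Hmr$ on which every point has isotropy exactly $\Hmr$ is open in $M^\Hmr$, but it is a closed manifold. The relative $h$-principle fails on it for the same reason.

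The error comes from misidentifying where the dimension hypothesis enters. A weak-equivalence statement needs neither nonempty fibers of $\mathcal{R}$ nor connectivity of Stiefel-type fibers; both sides are allowed to be empty. What the hypothesis must actually supply is one of two things on each stratum:
\begin{itemize}
\item the non-equivariant immersion problem $M_\Hmr\to V^\Hmr$ has positive codimension; or
\item in the equality case, the stratum has no component that is closed relative to the lower strata.
\end{itemize}
That is exactly what the relative Hirsch step needs on compact strata.

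So the statement your route can actually prove assumes $\dim M^\Hmr<\dim V^\Hmr$ for every isotropy subgroup $\Hmr$, with equality allowed only under such an openness condition. This does not affect the paper. For the immersions $\p(n\rho)\looparrowright k\rho$ used later, $\dim\p(n\rho)=2n-1<2k$ and $\dim\p(n\rho)^{\Cmr_2}=n-1<k$, so the inequalities are strict.
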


When $\Gmr$ is a finite discrete group, this simplifies greatly as every bundle monomorphism is geometrical.

\begin{lemma}\label{lem:finiteGroupGeometricalMonoIsAllMono}
    If $\Gmr$ is a finite discrete group, 
    \[
        \text{BundMon}_\Gmr^*(\Tmr M, \Tmr V) = \text{BundMon}_\Gmr(\Tmr M,\Tmr V).
    \]
\end{lemma}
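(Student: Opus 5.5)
We need to show that when $\Gmr$ is finite, the extra condition defining geometric bundle monomorphisms is automatic. Fix an equivariant bundle monomorphism $F:\Tmr M\to \Tmr V$ covering $f:M\to V$ and a point $x\in M$. The condition in Bierstone's definition concerns the restriction of $F_x$ to the subspace $\Tmr(\Gmr x)_x\subset \Tmr_x M$, the tangent space at $x$ of the orbit $\Gmr x$. The target is the tangent space $\Tmr(\Gmr f(x))_{f(x)}$ of the orbit of $f(x)$. The required map is the derivative at $x$ of the orbit map $\Gmr x\to \Gmr f(x)$, $gx\mapsto gf(x)$, which is well-defined because $f$ is equivariant.

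The key observation is that for finite discrete $\Gmr$ every orbit is a finite set of points, hence a $0$-dimensional submanifold. First, I will note that since $\Gmr$ is finite, $\Gmr x$ is a discrete finite subset of $M$, so
\[
\Tmr(\Gmr x)_x = 0.
\]
Similarly, $\Tmr(\Gmr f(x))_{f(x)} = 0$, and the derivative of the orbit map $\Gmr x\to\Gmr f(x)$ is the zero map between zero vector spaces. Next, the restriction $F_x|_{\Tmr(\Gmr x)_x}$ is the unique linear map $0\to \Tmr_{f(x)}V$, and its image is $0=\Tmr(\Gmr f(x))_{f(x)}$. So it agrees with the induced map on orbit tangent spaces.

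Hence every $F\in \text{BundMon}_\Gmr(\Tmr M,\Tmr V)$ satisfies the defining condition of $\text{BundMon}^*_\Gmr$. Together with the tautological inclusion $\text{BundMon}^*_\Gmr\subset\text{BundMon}_\Gmr$, this gives equality of the two sets. They are also equal as spaces, since $\text{BundMon}^*_\Gmr$ carries the subspace topology.

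There is no real obstacle. The only care needed is to read Bierstone's condition correctly: it constrains $F$ only along the orbit directions, and it does not involve the normal directions to the orbit. Once that is clear, the argument reduces to the fact that orbits of a finite group are $0$-dimensional.
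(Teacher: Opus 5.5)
Your proposal is correct and is essentially the paper's argument: for finite $\Gmr$ each orbit $\Gmr x$ is a finite discrete set, so $\Tmr(\Gmr x)_x = 0$ and Bierstone's orbit-direction condition on $F_x$ holds vacuously. You add two small details the paper leaves implicit, namely that the target $\Tmr(\Gmr f(x))_{f(x)}$ also vanishes and that the two sets agree as spaces because $\text{BundMon}^*_\Gmr$ carries the subspace topology.
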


\begin{proof}
    When $\Gmr$ is finite, the tangent bundle of an orbit $\Tmr(\Gmr x)$ is a $\Gmr_x$-indexed product of the zero vector space, where $\Gmr_x$ is the isotropy group of $x\in M$. As such, $\Tmr(\Gmr x)_x = 0$ for all $x\in X$. So any bundle monomorphism $(f,F)$ from $\Tmr M\to \Tmr V$ has $F_x|\Tmr(\Gmr x)_x$ equal to the zero homomorphism and $df|\Tmr(\Gmr x)$ is the zero homomorphism on each fiber. 
\end{proof}

Suppose that $M$ is a smooth $\Gmr$-manifold and $V$ is a $\Gmr$-representation such that $\dim M^\Hmr\leq \dim V^\Hmr$ for all $\Hmr \subset \Gmr$. If $\xi$ is a (non-virtual) $\Gmr$-vector bundle over $M$ of rank $W$ such that $\Tmr M\oplus \xi\cong \epsilon_V$ and $\varphi:M\to V$ is any smooth equivariant map, we can form a bundle monomorphism 
\[
    \begin{tikzcd}
        \Tmr M \ar[r, hook, ""] \ar[d, ""] & \Tmr M\oplus \xi \ar[r, "\cong"]\ar[d, ""] & M\times V \ar[rrr, "(x{,}v)\mapsto (\varphi(x){,} v)"]\ar[d, ""] &&& V\times V\ar[d, ""]\\
        M \ar[r, "\operatorname{Id}"] & M \ar[r, "\operatorname{Id}"] & M \ar[rrr, "\varphi"] &&& V
    \end{tikzcd}
\]
Denote the bundle monomorphism by $(\varphi, \Phi)$. 

\begin{lemma}\label{lem:existenceOfGImmersionIfBundleComplementExists}
    When $\Gmr$ is finite and such a bundle $\xi$, as above, exists, there is a $\Gmr$-equivariant immersion $M\looparrowright V$.
\end{lemma}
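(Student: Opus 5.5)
The plan is to apply Bierstone's theorem (\Cref{thm:BierstoneImmersion}) to the bundle monomorphism $(\varphi,\Phi)$ built just above the statement, after checking its hypotheses, and then use \Cref{lem:finiteGroupGeometricalMonoIsAllMono} to remove the geometricity condition.

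The first step is the dimension hypothesis $\dim M^\Hmr\leq \dim V^\Hmr$ for all $\Hmr\subset\Gmr$; the paper has already taken this as a standing assumption on $M$ and $V$. It can also be read off from the isomorphism $\Tmr M\oplus\xi\cong\epsilon_V$. Restrict to $M^\Hmr$ and take $\Hmr$-fixed vectors fiberwise, which is exact for real representations of a finite group. At a point $x\in M^\Hmr$ this gives $(\Tmr_xM)^\Hmr\oplus \xi_x^\Hmr\cong V^\Hmr$. Since $(\Tmr_xM)^\Hmr=\Tmr_x(M^\Hmr)$ by the slice theorem, we get $\dim M^\Hmr\le\dim V^\Hmr$ on every component.

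The second step is to confirm that $(\varphi,\Phi)$ is a $\Gmr$-equivariant bundle monomorphism $\Tmr M\to\Tmr V$.
\begin{itemize}
    \item The inclusion $\Tmr M\hookrightarrow \Tmr M\oplus\xi$ is an equivariant fiberwise injection.
    \item The isomorphism to $M\times V$ is equivariant because it is an isomorphism of $\Gmr$-bundles.
    \item The map $(x,v)\mapsto(\varphi(x),v)$ is equivariant because $\varphi$ is, and it is the identity on fibers.
\end{itemize}
An equivariant smooth $\varphi$ always exists, for instance the constant map to $0\in V$, or an average of any smooth map over $\Gmr$. Therefore $\Phi\in \text{BundMon}_\Gmr(\Tmr M,\Tmr V)$.

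The third step uses that $\Gmr$ is finite. By \Cref{lem:finiteGroupGeometricalMonoIsAllMono}, $\Phi$ lies in $\text{BundMon}^*_\Gmr(\Tmr M,\Tmr V)$. In particular this space is nonempty. Bierstone's weak equivalence is a bijection on $\pi_0$, so some path component of $\text{Imm}_\Gmr(M,V)$ maps to the component of $\Phi$. Hence $\text{Imm}_\Gmr(M,V)\neq\emptyset$, which gives an equivariant immersion $g$ with $dg$ homotopic to $\Phi$ through geometric monomorphisms.

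The main obstacle is the first step. Bierstone's hypothesis must hold for all subgroups, and deriving it requires the fixed-point identification $\Tmr_x(M^\Hmr)=(\Tmr_xM)^\Hmr$ together with exactness of $(-)^\Hmr$ on real $\Hmr$-representations. If one simply adopts the standing assumption from the preceding paragraph, this step is immediate and the rest is formal.
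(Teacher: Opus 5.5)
Your proposal is correct and follows the paper's argument: form the monomorphism $(\varphi,\Phi)$, use \Cref{lem:finiteGroupGeometricalMonoIsAllMono} to see that it is geometric, and conclude from \Cref{thm:BierstoneImmersion}. Your additional checks are correct details that the paper leaves implicit: you derive the fixed-point dimension bound from $\Tmr M\oplus\xi\cong\epsilon_V$, exhibit an equivariant $\varphi$, and use the bijection on $\pi_0$ to get that $\text{Imm}_\Gmr(M,V)$ is nonempty.
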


\begin{proof}
    By \Cref{lem:finiteGroupGeometricalMonoIsAllMono}, any bundle monomorphism $(\varphi, \Phi)$ is geometrical. The result then follows from \Cref{thm:BierstoneImmersion}.
\end{proof}

In the classical case, it sufficed to find a vector bundle representative $\xi$ of $-[\Tmr M]$, since it guarantees a bundle isomorphism
\[
    \Tmr M \oplus \xi \oplus \epsilon_\ell\cong \epsilon_{n+k}\oplus \epsilon_\ell.
\]
for some $\ell\geq 0$. One then applies the following standard fact from bundle theory.
\begin{thm}[Cancellation]\label{thm:ClassicalCancellation}
    Suppose $\xi$ and $\eta$ are vector bundles over $M$. If $\xi\oplus \epsilon_k\cong \eta\oplus \epsilon_k$ and $\text{rank}(\xi)> \dim(M)+1$, then $\xi\cong \eta$. 
\end{thm}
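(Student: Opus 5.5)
The plan is to argue by obstruction theory. Vector bundles of rank $m$ over $M$ are classified by homotopy classes $[M, \mathrm{BO}(m)]$, and stably equivalent bundles are classified by $[M,\mathrm{BO}]$. We want to see that, when the rank is large, the stabilization map $[M,\mathrm{BO}(m)]\to[M,\mathrm{BO}(m+k)]$ is injective. By induction on $k$, it suffices to treat the case of adding one trivial line. There we show that $\xi\oplus\epsilon_1\cong\eta\oplus\epsilon_1$ with $m=\operatorname{rank}\xi>\dim M+1$ forces $\xi\cong\eta$. The hypothesis only improves as the rank grows, so the inductive step is the same at every stage. (We may assume $M$ is a finite CW complex, e.g.\ a closed manifold, with $d=\dim M$.)

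For the one-step case, I would use the fibration
\[
S^{m}\simeq \mathrm{O}(m+1)/\mathrm{O}(m)\to \mathrm{BO}(m)\to \mathrm{BO}(m+1).
\]
Its long exact sequence shows that $\mathrm{BO}(m)\to\mathrm{BO}(m+1)$ is $m$-connected: it is an isomorphism on $\pi_i$ for $i<m$ and surjective on $\pi_m$. Then apply the standard Whitehead-type lemma. If $f\colon Y\to Z$ is $n$-connected and $X$ is a CW complex of dimension $<n$, then $f_*\colon[X,Y]\to[X,Z]$ is bijective. If $\dim X\le n$, it is surjective.

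Concretely, given classifying maps $g_\xi,g_\eta\colon M\to\mathrm{BO}(m)$ whose composites into $\mathrm{BO}(m+1)$ are homotopic, regard the homotopy as a map $M\times I\to\mathrm{BO}(m+1)$. It is already lifted on $M\times\partial I$. The relative CW complex $(M\times I, M\times\partial I)$ has cells of dimension at most $d+1$. The obstructions to lifting lie in $H^{j+1}(M\times I,M\times\partial I;\pi_j(S^m))$, which vanish when $j<m$. Since $d+1<m$, every obstruction vanishes, so we get a lift, that is, a homotopy $g_\xi\simeq g_\eta$ and hence $\xi\cong\eta$.

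A small point to handle is that the fibration has simple fibre $S^m$ with $m\ge 2$. In that range $\pi_1$ acts trivially on $\pi_m(S^m)$ up to sign, but this does not matter since the relevant coefficient groups are zero anyway. So no twisted-coefficient issue arises, and the relative lifting argument is fine.

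The main obstacle is just bookkeeping of the connectivity bound, to check that $\operatorname{rank}\xi>\dim M+1$ (actually $\operatorname{rank}\xi>\dim M$ would suffice) puts us in the injectivity range at each inductive stage. Alternatively, one could cite Husemoller's stability theorem for $[M,\mathrm{BO}(m)]$ directly.
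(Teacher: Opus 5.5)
Your argument is correct, but it is organized differently from the paper's. The paper obtains cancellation as a corollary of \Cref{prop:DecompBundle}. There one fixes a bundle $\zeta$ and uses obstruction theory for sections of the Stiefel bundle $V_k(\zeta)\to M$, whose fibre $V_k(\mathbb{R}^r)$ is $(r-k-1)$-connected, to show that a splitting $\zeta\cong\xi\oplus\epsilon_k$ exists and is unique up to homotopy. Cancellation then follows as in the proof of \Cref{thm:C2Cancellation}: transport the canonical nowhere-zero section of $\eta\oplus\epsilon_1$ to $\xi\oplus\epsilon_1$, note that its complement is $\cong\eta$, and use uniqueness of such sections up to homotopy to identify that complement with $\xi$.

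You instead work universally: $\mathrm{BO}(m)\to\mathrm{BO}(m+1)$ is $m$-connected, so $[M,\mathrm{BO}(m)]\to[M,\mathrm{BO}(m+1)]$ is injective once $\dim M<m$. The underlying obstruction theory is the same. A lift of the classifying map of $\zeta$ through $\mathrm{BO}(m)$ is exactly a nowhere-zero section of $\zeta$, and the fibre $S^m$ is $V_1(\mathbb{R}^{m+1})$. Your packaging avoids the step of turning a homotopy of sections into an isomorphism of complements. It also gives the sharp range $\operatorname{rank}\xi>\dim M$; the extra $+1$ in the stated hypothesis comes from the conservative uniqueness bound in \Cref{prop:DecompBundle}.

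The paper's section-theoretic formulation is what it carries over to $\Cmr_2$. A section over $M^{\Cmr_2}$ must land in $\zeta^{+}$ or $\zeta^{-}$, which produces the eigenbundle rank conditions, and one then extends over the free cells. To equivariantize your route you would need the equivariant Whitehead theorem applied to the fixed points of the equivariant classifying spaces.

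One small correction: $\pi_1(\mathrm{BO}(m+1))$ acts on $\pi_m(S^m)$ through the orientation character, so the action is nontrivial. As you say, this is irrelevant here, because the coefficient groups that actually occur are zero.
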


This theorem is often stated as a corollary of the following result. Let $\zeta$ be a vector bundle over $M$.

\begin{proposition}\label{prop:DecompBundle}
    There exists a bundle isomorphism $\zeta\cong \xi\oplus \epsilon_k$, if $\text{rank}(\zeta) - k > \dim(M)$. Furthermore, this isomorphism is unique up to bundle homotopy equivalence if $\text{rank}(\zeta) - k > \dim(M) + 1$.
\end{proposition}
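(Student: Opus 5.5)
We prove \Cref{prop:DecompBundle} by obstruction theory on the sphere bundle of $\zeta$, inducting on $k$. Let $r=\text{rank}(\zeta)$ and $d=\dim(M)$, and give $\zeta$ a Riemannian metric.

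\emph{Existence, case $k=1$.} Splitting $\zeta\cong\xi\oplus\epsilon_1$ is the same as finding a nowhere-vanishing section of $\zeta$, i.e.\ a section of the sphere bundle $S(\zeta)\to M$. Then $\xi$ is the orthogonal complement of that section. The fiber of $S(\zeta)$ is $S^{r-1}$, which is $(r-2)$-connected. Give $M$ a CW structure of dimension $d$ (e.g.\ via a triangulation of the smooth manifold). The obstructions to a section lie in $H^{j}(M;\underline{\pi_{j-1}(S^{r-1})})$, with local coefficients. These groups vanish when $j\le d$ and $j-1\le r-2$. Hence if $r-1\ge d$, i.e.\ $r>d$, a section exists.

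\emph{Existence, general $k$.} Having split $\zeta\cong\zeta'\oplus\epsilon_1$, we apply the same argument to $\zeta'$, of rank $r-1$. After $k$ steps the last bundle to be split has rank $r-k+1$, so the hypothesis needed at that step is $r-k+1>d$. This is implied by the assumption $r-k>d$, which gives existence.

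\emph{Uniqueness.} Suppose $s_0,s_1$ are two sections of $S(\zeta)$; we must show they are homotopic. Equivalently, extend the section of the pulled-back sphere bundle over $M\times\{0,1\}$ to all of $M\times I$. The relative CW complex $(M\times I, M\times\partial I)$ has cells of dimension at most $d+1$. The obstructions lie in $H^{j}(M\times I,M\times\partial I;\underline{\pi_{j-1}(S^{r-1})})$ and vanish when $d+1\le r-1$, i.e.\ $r>d+1$. A homotopy of sections gives an isotopy of the complements $\xi$, hence a bundle homotopy between the two splittings.

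For general $k$ the uniqueness statement follows by induction, with the condition $r-k>d+1$ in force at every stage. There is one extra point: a splitting $\zeta\cong\xi\oplus\epsilon_k$ is determined by a $k$-frame, and frames come from iterated sections. The relevant connectivity is that of the Stiefel manifold $V_k(\mathbb{R}^r)$, which is $(r-k-1)$-connected. The obstruction count then gives existence when $d\le r-k$ and uniqueness when $d+1\le r-k$. This is marginally better than needed, so the statement holds.

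The main obstacle is this bookkeeping with local coefficients for a possibly nonorientable $\zeta$. It is harmless here, because the obstruction groups vanish identically in the relevant range, and identically vanishing groups vanish for any twisting. The Cancellation theorem \Cref{thm:ClassicalCancellation} then follows by applying uniqueness to $\zeta=\xi\oplus\epsilon_k\cong\eta\oplus\epsilon_k$.
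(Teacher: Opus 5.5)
Your proof is correct and follows essentially the same route as the paper. For existence, both use obstruction theory over the skeleta of $M$ for sections of the $k$-frame bundle, whose fiber $V_k(\mathbb{R}^r)$ is $(r-k-1)$-connected; for uniqueness, both use the same obstruction theory on $M\times I$ relative to $M\times\partial I$. Your iterated sphere-bundle induction is a repackaging of this argument, and your bounds are slightly sharper than the ones stated. Its uniqueness step tacitly relies on homotopy lifting along $V_k(\zeta)\to S(\zeta)$, but your direct Stiefel-manifold count makes that step unnecessary.
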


\begin{proof}
     
     The existence of such an isomorphism is equivalent to the existence of a section of the associated $k$-frame bundle $V_k(\zeta)\to M$. If $\zeta$ has rank $r$, then the fiber of this bundle is $V_k(\r^{r})$. The Stiefel manifold $V_k(\r^r)$ is $(r-k-1)$-connected. One builds a section of this bundle by induction over the skeleta of $M$. The obstructions to extending a section across an $i$-dimensional cell belong to $\Hmr^i(M;\pi_{i-1}(V_k(\r^r))$. Since $\text{rank}(\zeta) - k > \dim(M)$, we have that $\pi_{i-1}(V_k(\mathbb{R}^r))=0$ for $0 < i < \mathrm{dim}(M)$. Thus, these obstruction groups are all trivial. 

    Uniqueness is saying that any two choices of section of the associated $k$-frame bundle are homotopic. A homotopy between any two sections is the same data as a section of the associated $k$-frame bundle $V_k(\zeta)\times [0,1]$ over $M\times [0,1]$, where we obtain the original sections by projecting onto the endpoints of the interval. If we take two sections of the bundle $V_k(\zeta)\to M$, this prescribes the data at the endpoints of such a bundle morphism. Now we need to extend to the ``middle" of the interval. This is exactly the same problem as existence where we need to induct over the cells of $M\times [0,1]$ now instead of just $M$.
\end{proof}

It is unclear to the authors whether a general equivariant analogue of \Cref{thm:ClassicalCancellation} and \Cref{prop:DecompBundle} appear explicitly in the literature. Since we are only concerned with $\Gmr=\Cmr_2$ in our applications, we provide generalizations only in this setting. 

Suppose that $\zeta$ is a $\Cmr_2$-equivariant vector bundle over a $\Cmr_2$-equivariant smooth manifold $M$. Notice that for each $x \in M^{\Cmr_2}$, the fiber $\zeta_x$ carries a linear involution, and therefore decomposes into its $+1$ and $-1$ eigenspaces. These eigenspaces assemble to form a decomposition
\[
    \zeta|_{M^{\Cmr_2}}\cong \zeta^+\oplus \zeta^-
\]
where $\zeta^+$ is the eigenbundle constructed from all of the $+1$-eigenspaces, and $\zeta^-$ is the eigenbundle constructed from all of the $-1$-eigenspaces. 

\begin{proposition}\label{prop:C2DecompBundle}
    There is a $\Cmr_2$-equivariant bundle isomorphism $\zeta\cong \xi\oplus \epsilon_V$ if
    \begin{itemize}
        \item $\text{rank}(\zeta) - \dim(V) > \dim(M)$,
        \item $\text{rank}(\zeta^\pm) - \dim(V^\pm) > \dim(M^{\Cmr_2})$. 
    \end{itemize}
    Furthermore, this bundle isomorphism is unique up to $\Cmr_2$-equivariant bundle homotopy if
    \begin{itemize}
        \item $\text{rank}(\zeta) - \dim(V) > \dim(M) + 1$,
        \item $\text{rank}(\zeta^\pm) - \dim(V^\pm) > \dim(M^{\Cmr_2}) + 1$. 
    \end{itemize}
\end{proposition}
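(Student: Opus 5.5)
We follow the proof of \Cref{prop:DecompBundle}, replacing ordinary obstruction theory by equivariant obstruction theory over a $\Cmr_2$-CW structure on $M$. A $\Cmr_2$-equivariant isomorphism $\zeta\cong \xi\oplus\epsilon_V$ is the same as a $\Cmr_2$-equivariant bundle monomorphism $\epsilon_V\hookrightarrow \zeta$ (take $\xi$ to be an invariant complement, e.g.\ the orthogonal complement for an invariant metric). Such a monomorphism is an equivariant section of the bundle $\mathrm{Mono}(V,\zeta)\to M$, whose fiber over $x$ is the space of injective linear maps $V\to \zeta_x$, with $\Cmr_2$ acting by conjugation. This fiber deformation retracts (equivariantly, by Gram--Schmidt with respect to invariant inner products) onto the equivariant Stiefel manifold of isometric embeddings. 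So we must produce an equivariant section of a $\Cmr_2$-fiber bundle $E\to M$.

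Triangulate $M$ equivariantly (Illman), so that $M$ has a $\Cmr_2$-CW structure with cells of type $\Cmr_2/\Cmr_2\times D^i$, lying in $M^{\Cmr_2}$ with $i\le \dim M^{\Cmr_2}$, and free cells $\Cmr_2\times D^i$ with $i\le\dim M$. We extend a section cell by cell. For a free cell, an equivariant extension is just a nonequivariant extension over one representative $D^i$, translated by the group. The obstruction lies in $\pi_{i-1}$ of the nonequivariant fiber $V_{\dim V}(\r^{\mathrm{rank}\,\zeta})$, which is $(\mathrm{rank}\,\zeta-\dim V-1)$-connected. This vanishes for $i\le \dim M$ by the first hypothesis.

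For a fixed cell, an equivariant extension is an extension of a section of the fixed-point bundle $E^{\Cmr_2}$. Over $x\in M^{\Cmr_2}$, an equivariant injection $V\to\zeta_x$ is a pair of injections $V^+\to \zeta^+_x$ and $V^-\to\zeta^-_x$. Hence the fiber of $E^{\Cmr_2}$ is $V_{\dim V^+}(\r^{\mathrm{rank}\,\zeta^+})\times V_{\dim V^-}(\r^{\mathrm{rank}\,\zeta^-})$, which is $\min_\pm(\mathrm{rank}\,\zeta^\pm-\dim V^\pm-1)$-connected. Its $\pi_{i-1}$ therefore vanishes for $i\le\dim M^{\Cmr_2}$ by the second hypothesis.

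The main point needing care is this identification of the fixed fiber together with a clean formulation of the cell-by-cell extension. We phrase it through the equivalence $\mathrm{Map}_{\Cmr_2}(\Cmr_2/H\times D^i,E)\simeq \mathrm{Map}(D^i,E^H)$, equivalently via Bredon obstruction groups $\Hmr^i_{\Cmr_2}(M;\underline{\pi}_{i-1})$ with coefficient system $\Cmr_2/H\mapsto\pi_{i-1}(F^H)$. Both coefficient groups vanish in the relevant range, so all obstructions vanish. One also checks that the connectivity of the fixed-point fiber is the relevant condition for extension along the inclusion of a fixed cell.

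For uniqueness, we apply the same argument relatively to the $\Cmr_2$-manifold $M\times[0,1]$ with trivial action on $[0,1]$, extending a section given on $M\times\{0,1\}$. The relevant cells now have dimensions at most $\dim M+1$ (free) and $\dim M^{\Cmr_2}+1$ (fixed). The strengthened inequalities kill the obstructions exactly as before, yielding an equivariant homotopy between any two splittings.
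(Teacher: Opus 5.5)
Your proof is correct, but it takes a different route from the paper.

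\textbf{The paper's route.} It splits $V$ into trivial and sign lines and treats $V=1$ and $V=\sigma$ one at a time. Over $M^{\mathrm{C}_2}$ a nonvanishing equivariant section must land in $\zeta^+$ (resp.\ $\zeta^-$). So the paper gets one from the nonequivariant \Cref{prop:DecompBundle} applied to that eigenbundle. It then spreads this section to an invariant neighborhood using the equivariant tubular neighborhood theorem. Finally it extends over the remaining free cells $\mathrm{C}_2\times D^i$, with obstructions in $\pi_{i-1}(S^{r-1})$. Uniqueness is simply asserted to follow from \Cref{prop:DecompBundle}.

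\textbf{Your route.} You treat all of $V$ at once, via the bundle $\mathrm{Mono}(V,\zeta)$ and obstruction theory on an Illman $\mathrm{C}_2$-CW structure. The key input is that the fixed fiber over $x\in M^{\mathrm{C}_2}$ is a product of Stiefel manifolds for the $\pm$ eigenspaces.

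\textbf{What each buys.} The paper stays within nonequivariant obstruction theory with sphere fibers, plus standard geometric input. The cost is a line-by-line induction on $\dim V$, whose hypotheses telescope but must be tracked. The passage from the neighborhood of $M^{\mathrm{C}_2}$ to the free cells is also left somewhat informal. Your argument shows directly that the $\zeta^\pm$ hypotheses are exactly the connectivity conditions on the fixed fibers. Your relative argument on $M\times[0,1]$ also produces an \emph{equivariant} homotopy between splittings. By contrast, citing the nonequivariant \Cref{prop:DecompBundle} only gives nonequivariant homotopies unless its proof is rerun equivariantly — which is precisely what you do.
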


\begin{proof}
    We first prove existence. Because every $\Cmr_2$-representation decomposes as a direct sum of sign and trivial representations, it suffices to study the cases of $V = 1$ and $V = \sigma$. 
    When $V = 1$, a section $M^{\Cmr_2}\to \zeta$ necessarily factors through the fixed points $\zeta^+$. As $\text{rank}(\zeta^+) - 1 > \dim(M^{\Cmr_2})$, such a nonzero section exists by \Cref{prop:DecompBundle}. By the equivariant tubular neighborhood theorem, one can extend this section to a nonzero section in a small neighborhood $N$ of $M^{\Cmr_2}$ in $M$. It remains to extend the section over the free cells of $M$. 
    Free cells in $M$ are of the form $\Cmr_2\times D^i$. So the obstruction to extending over such a cell lies in $\Hmr^i(M; \pi_{i-1}(S^{r-1}))$, which vanishes since $r - 1 > \dim(M) \geq i$.

    The case where $V = \sigma$ follows by the same argument, noting that any section $M^{\Cmr_2}\to \zeta$ necessarily factors through $\zeta^-$. 

    Uniqueness follows from \Cref{prop:DecompBundle}. 
\end{proof}

\begin{thm}[$\Cmr_2$-Cancellation]\label{thm:C2Cancellation}
    Suppose $\xi$ and $\eta$ are $\Cmr_2$-vector bundles over a $\Cmr_2$-smooth manifold $M$. If $\xi\oplus \epsilon_V\cong \eta\oplus \epsilon_V$ and
    \begin{itemize}
        \item $\text{rank}(\xi)>\dim(M) + 1$,
        \item $\text{rank}(\xi^\pm) > \dim(M^{\Cmr_2}) + 1$,
    \end{itemize}
    then $\xi\cong \eta$. 
\end{thm}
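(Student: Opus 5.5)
The plan is to deduce this from the uniqueness half of \Cref{prop:C2DecompBundle}, in the same way \Cref{thm:ClassicalCancellation} follows from \Cref{prop:DecompBundle}. Set $\zeta \coloneqq \xi\oplus\epsilon_V \cong \eta\oplus\epsilon_V$ and fix such an isomorphism $\psi$. Then $\zeta$ has two decompositions of the form $\zeta\cong \xi'\oplus\epsilon_V$. The first is the tautological one, whose complement is $\xi$. The second is obtained by transporting the tautological decomposition of $\eta\oplus\epsilon_V$ along $\psi$, and its complement is isomorphic to $\eta$. Equivalently, each decomposition is an equivariant $V$-frame in $\zeta$, meaning an equivariant bundle monomorphism $\epsilon_V\hookrightarrow\zeta$. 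The first is the inclusion $s_1$ of the $\epsilon_V$ summand, and the second is $s_2=\psi\circ(\text{inclusion})$. The complements are the orthogonal complements $s_1(\epsilon_V)^\perp\cong\xi$ and $s_2(\epsilon_V)^\perp\cong\eta$, taken with respect to a $\Cmr_2$-invariant metric obtained by averaging.

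Next I check the hypotheses of the uniqueness statement. We have $\text{rank}(\zeta)-\dim V=\text{rank}(\xi)>\dim M+1$. Over $M^{\Cmr_2}$ the eigenbundles split as $\zeta^\pm=\xi^\pm\oplus\epsilon_{V^\pm}$, so $\text{rank}(\zeta^\pm)-\dim V^\pm=\text{rank}(\xi^\pm)>\dim M^{\Cmr_2}+1$. These are exactly the hypotheses of the uniqueness clause of \Cref{prop:C2DecompBundle}. It follows that the equivariant sections $s_1,s_2$ of the equivariant $V$-frame bundle of $\zeta$ are equivariantly homotopic through equivariant monomorphisms $s_t$.

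Finally, I pass from a homotopy of frames to an isomorphism of complements. The family $s_t(\epsilon_V)^\perp$ is a $\Cmr_2$-vector bundle over $M\times[0,1]$, with $\Cmr_2$ acting trivially on $[0,1]$. Its restrictions to the two ends are $\xi$ and $\eta$. By equivariant homotopy invariance of pullbacks of $\Cmr_2$-bundles over compact bases (equivariant covering homotopy), the two restrictions are isomorphic, so $\xi\cong\eta$. If $M$ is not compact, I would use a $\Cmr_2$-CW structure together with the standard homotopy-invariance statement for numerable $\Cmr_2$-bundles.

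The main obstacle is that \Cref{prop:C2DecompBundle} was proved only by reduction to $V=1$ and $V=\sigma$, with uniqueness only sketched. For general $V=a+b\sigma$, the relevant object is the bundle of equivariant frames. Over a fixed point, this is a pair of frames in $\zeta^+$ and $\zeta^-$, with fibre $V_a(\r^{\text{rank}\zeta^+})\times V_b(\r^{\text{rank}\zeta^-})$. Over free cells $\Cmr_2\times D^i$, it is an ordinary $\dim V$-frame. I would rerun the obstruction argument on $M\times[0,1]$ relative to $M\times\{0,1\}$. On the fixed part, connectivity of the fibre is at least $\min(\text{rank}\zeta^\pm-\dim V^\pm)-1>\dim M^{\Cmr_2}$. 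On the free part, connectivity of the fibre is $\text{rank}\zeta-\dim V-1>\dim M$. These are exactly the bounds guaranteed by the two bulleted hypotheses. If this stepwise reduction is delicate, one can instead peel off the copies of $1$ and $\sigma$ one at a time. At each stage the rank inequalities only improve for the remaining bundle, so the single-summand case of \Cref{prop:C2DecompBundle} applies inductively.
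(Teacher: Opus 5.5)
Your proposal is correct and follows essentially the same route as the paper: both realize the two splittings of $\zeta=\xi\oplus\epsilon_V$ as two equivariant copies of $\epsilon_V$ in $\zeta$, use the uniqueness clause of \Cref{prop:C2DecompBundle} to homotope one to the other, and read off $\xi\cong\eta$ from the complements. The paper first reduces to $V=1$ and $V=\sigma$, which is your fallback peeling argument, and your complement bundle over $M\times[0,1]$ together with the relative obstruction count on the fixed and free parts spells out details the paper leaves implicit.
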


\begin{proof}
    Decomposing $V$ into trivial and sign representations, it suffices to study the cases where
    \[
        \xi\oplus \epsilon_1 \cong \eta\oplus \epsilon_1
    \]
    and
    \[
        \xi\oplus \epsilon_\sigma \cong \eta\oplus \epsilon_\sigma.
    \]
    Since the arguments are largely identical in both cases, we present the proof in the former case. Suppose $\xi\oplus \epsilon_1\to \eta\oplus \epsilon_1$ is a bundle isomorphism. A nonvanishing section $s:M\to \epsilon_1 \hookrightarrow \eta\oplus \epsilon_1$ determines a nonvanishing section $M\to \xi\oplus \epsilon_1$. This section defines a rank $1$ trivial subbundle $\tau$ of $\xi\oplus \epsilon_1$, which need not coincide with the canonical summand. Suppose $\tau$ is the trivial summand of $\xi\oplus \epsilon_1$ determined by this section and let $\nu$ denote its complement. By definition, the bundle isomorphism $\xi\oplus \epsilon_1\cong \eta\oplus \epsilon_1$ restricts to a bundle isomorphism $\nu\cong \eta$. Uniqueness of the section of $\xi\oplus \epsilon_1$ up to bundle homotopy implies the section of $\xi\oplus \epsilon_1$ is bundle homotopic to the obvious non-vanishing section $M\to \epsilon_1\to \xi\oplus \epsilon_1$. Such a bundle homotopy defines a bundle isomorphism $\xi\cong \nu$. Consequently, 
    \[
        \xi\cong \nu\cong \eta,
    \]
    establishing the claim.   
\end{proof}

\subsection{Applications to $\p(n\rho)$} \

We are specifically interested in the case when $\Gmr = \Cmr_2$. Just as in the non-equivariant case, one can construct a bundle monomorphism whenever one can find a (non-virtual) vector bundle $\xi$ which is stably equivalent to $-\Tmr M$. One must first clarify what is meant by stable equivalence, since there are two notions of K-theory that may be considered. 

The first is Atiyah's Real K-theory \cite{Atiyah66}, which is represented by the $\Cmr_2$-spectrum $\Kmr\r$. If $X$ is a $\Cmr_2$-space, then $\Kmr\r^0(X)$ is the Grothendieck group of isomorphism classes of virtual rank $0$ Real vector bundles on $X$. If $\tau : X \to X$ denotes the $\Cmr_2$-action, then an Atiyah Real vector bundle on $X$ is a complex vector bundle $\pi : E \to X$ equipped with a $\Cmr_2$-action $\widetilde{\tau} : E \to E$ such that $\pi$ is equivariant and $\widetilde{\tau}$ restricts to a conjugate-linear map on each fiber. 

Alternatively, one can construct a K-theory of $\Cmr_2$-equivariant vector bundles over $X$, represented by a $\Cmr_2$-spectrum $\text{KO}_{\Cmr_2}$. By a $\Cmr_2$-equivariant vector bundle, we mean a real vector bundle $\pi : E \to X$ equipped with a $\Cmr_2$-action $\overline{\tau} : E \to E$ such that $\pi$ is equivariant and $\overline{\tau}$ restricts to an $\mathbb{R}$-linear map on each fiber. 

By forgetting the complex structure of an Atiyah Real vector bundle, one obtains a $\Cmr_2$-equivariant vector bundle. This results in a ring map
\begin{equation}\label{eqn:realificationKR}
    r:\begin{tikzcd}
      \text{K}\r^0(X) \ar[r, ""] & \text{KO}_{\Cmr_2}^0(X)  
    \end{tikzcd}
\end{equation}
sending the equivalence class of an Atiyah Real line bundle to its underlying $\Cmr_2$-equivariant bundle. We refer to this map as \textit{realification}. In particular, if $[\xi]$ is $N$-torsion, it follows that $r([\xi])$ is also $N$-torsion. Supposing $X$ is a $\Cmr_2$-equivariant smooth manifold, if a multiple of $[\Tmr M]\in \text{KO}_{\Cmr_2}^0(X)$ is in the image of $r$, one can study its degree of torsion by studying its lifts in Atiyah Real K-theory. 

Let $X = \p(n\rho)$, and recall from \cite{BWZZ25} that
\begin{equation}\label{eqn:TP(nrho)Relation}
    \Tmr \p(n\rho) \oplus \epsilon_1 \cong n\xi_\rho^{(n)}
\end{equation}
where
\begin{equation*}
    \xi_\rho^{(n)} \coloneqq\left\{ \begin{tikzcd}
        S(n\rho\otimes \tau_2) \times_{\Sigma_2} (\rho \otimes \tau_2)\ar[d, ""]\\
        \p(n\rho)
    \end{tikzcd} \right.
\end{equation*}
is the $\rho$-dimensional tautological bundle over $\p(n\rho)$ analogous to the classical tautological line bundle over $\r\p^{n}$.
This implies that $[\Tmr \p(n\rho)] = [n\xi_\rho^{(n)}]$ in $\text{KO}^0_{\Cmr_2}(\p(n\rho))$. 

Recall the Atiyah Real line bundle is the rank $1$ complex vector bundle
\[
    \xi_\r^{(\infty)} = \left\{\begin{tikzcd}
        S(\c^\infty) \times_{S^1} \c \ar[d, ""]\\
        \Bmr S^1
    \end{tikzcd}\right.
\]
where $\Bmr S^1 \coloneqq S(\c^\infty)/S^1$. 
Let $\xi_\rho^{(\infty)}$ denote the pullback of $\xi_\mathbb{R}^{(\infty)}$ to $\mathrm{B}_{\mathrm{C}_2}\Sigma_2$ via the quotient map
\[
    \begin{tikzcd}
        \Bmr_{\Cmr_2}\Sigma_2 \simeq (\kappa^*S(\c^\infty))/\Sigma_2 \ar[r, "j"] & S(\c^\infty)/S^1 \simeq \Bmr S^1
    \end{tikzcd}
\]
where $\kappa:\Sigma_2\to S^1$ sends the generator of $\Sigma_2$ to $-1$. 

\begin{lemma}\label{lem:realificationAtiyahRealBundle}
    $r([\xi_\r^{(\infty)}]) = [\xi_\rho^{(\infty)}]$.
\end{lemma}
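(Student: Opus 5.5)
Since the statement literally reads $r([\xi_\r^{(\infty)}])$, but $r$ lands in $\mathrm{KO}^0_{\Cmr_2}(\Bmr_{\Cmr_2}\Sigma_2)$, I read it as concerning the pulled-back Real line bundle $j^*\xi_\r^{(\infty)}$. The plan is to prove the identity at the level of bundles: construct an explicit $\Cmr_2$-equivariant real vector bundle isomorphism between the realification of $j^*\xi_\r^{(\infty)}$ and $\xi_\rho^{(\infty)}$, viewed as the colimit over $n$ of the tautological bundles $\xi_\rho^{(n)} = S(n\rho\otimes\tau_2)\times_{\Sigma_2}(\rho\otimes\tau_2)$. Everything then follows by passing to isomorphism classes. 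Naturality of $r$ with respect to pullback means I may equivalently compute $j^*$ of the realification of $\xi_\r^{(\infty)}$.

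The first step is to fix models. I take $\c^\infty$ with the Real structure given by complex conjugation. As a real $\Cmr_2$-representation, $\c = 1+\sigma = \rho$, so $\c^\infty\cong\infty\rho$ and $S(\c^\infty)\cong S(\infty\rho)$. Restricting the free $S^1$-action along $\kappa$ gives the antipodal action, so $\kappa^*S(\c^\infty)/\Sigma_2 = S(\infty\rho)/\pm1 = \p(\infty\rho)\simeq\Bmr_{\Cmr_2}\Sigma_2$. The conjugation commutes with the antipodal map, so this is a $\Cmr_2$-model. I will check that this identification is compatible with the identification of $\Bmr_{\Cmr_2}\Sigma_2$ with $\p(\infty\rho)$ used earlier, where $\tau_2$ records the $\Sigma_2$-action.

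Next I compute the pullback. We have $j^*(S(\c^\infty)\times_{S^1}\c) \cong S(\c^\infty)\times_{\Sigma_2}\c$, where $\Sigma_2$ acts on $\c$ through $\kappa$, that is, by $-1$. The $\Cmr_2$-action is conjugation on both factors. Forgetting the complex structure, the fiber $\c$ with conjugation and the sign action of $\Sigma_2$ is exactly $\rho\otimes\tau_2$. Hence the realification is $S(\infty\rho\otimes\tau_2)\times_{\Sigma_2}(\rho\otimes\tau_2) = \xi_\rho^{(\infty)}$. The required isomorphism is the identity map on underlying sets, and it is equivariant for both $\Sigma_2$ and $\Cmr_2$.

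The main obstacle is bookkeeping rather than anything deep. One must ensure that the $\Cmr_2$-action on the pulled-back Real bundle is the conjugate-linear action, and that forgetting to $\mathrm{KO}_{\Cmr_2}$ respects the $\Sigma_2$-twist $\tau_2$. In particular, the real structure $\rho = 1+\sigma$ on the fiber must match the $\rho\otimes\tau_2$ in the definition of $\xi_\rho$. I would handle this by writing out the actions explicitly in coordinates $z=x+iy$, where conjugation negates $y$ (giving the $\sigma$ summand) and $-1$ negates both $x$ and $y$ (giving the $\tau_2$ twist). Once this is done, restricting to $\p(n\rho)$ gives the finite-level statement by naturality of $r$ under pullback along the inclusions.
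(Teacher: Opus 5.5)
Your proposal is correct and follows the paper's argument. The paper's one-line proof is the observation that the fiber $\kappa^*\c$, with conjugation and $\Sigma_2$ acting through $\kappa$, is $\rho$ as an orthogonal representation (more precisely, as your coordinate check shows, $\rho\otimes\tau_2$). Your identification of $\kappa^*S(\c^\infty)/\Sigma_2$ with $\p(\infty\rho)$, and of the pullback as the associated bundle $S(\c^\infty)\times_{\Sigma_2}\kappa^*\c$, spells out bookkeeping the paper leaves implicit.
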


\begin{proof}
    This is immediate as the underlying $\Sigma_2$-representation of $\kappa^*\c$ is isomorphic as an orthogonal representation to $\rho$. 
\end{proof}

Pulling back $\xi_\rho^{(\infty)}$ and $\xi_\r^{(\infty)}$ along the maps $\p(n\rho)\to \p(\infty\rho)$ yields vector bundles $\xi_\rho^{(n)}$ and $\xi_\r^{(n)}$, respectively, which satisfy the evident compatibility relation.

\begin{corollary}
    $r([\xi_\r^{(n)}]) = [\xi_\rho^{(n)}]$.
\end{corollary}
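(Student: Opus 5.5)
This follows from \Cref{lem:realificationAtiyahRealBundle} together with naturality of realification under pullback. Write $\iota_n:\p(n\rho)\to \p(\infty\rho)=\Bmr_{\Cmr_2}\Sigma_2$ for the inclusion from the filtration \eqref{filtration:p(n rho)}. By definition,
\[
\xi_\r^{(n)}=\iota_n^*\xi_\r^{(\infty)},\qquad \xi_\rho^{(n)}=\iota_n^*\xi_\rho^{(\infty)},
\]
where $\xi_\r^{(\infty)}$ is regarded as a Real bundle on $\Bmr_{\Cmr_2}\Sigma_2$ via $j$.

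The first step is to check that $r$ is a natural transformation of contravariant functors $\Kmr\r^0(-)\to \text{KO}^0_{\Cmr_2}(-)$. This holds at the level of bundles. The pullback $f^*E$ of an Atiyah Real bundle $E$ along an equivariant map $f$ has total space $X\times_Y E$ with the diagonal involution. Forgetting the complex structure on $f^*E$ gives exactly the pullback of the underlying $\Cmr_2$-equivariant real bundle. Since both constructions are compatible with direct sums, naturality passes to Grothendieck groups, so
\[
r\circ \iota_n^* = \iota_n^*\circ r .
\]

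The second step is the computation itself:
\[
r([\xi_\r^{(n)}]) = r(\iota_n^*[\xi_\r^{(\infty)}]) = \iota_n^*\, r([\xi_\r^{(\infty)}]) = \iota_n^*[\xi_\rho^{(\infty)}] = [\xi_\rho^{(n)}],
\]
where the third equality is \Cref{lem:realificationAtiyahRealBundle}.

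The only subtle point is identifying the pullback $\iota_n^*\xi_\rho^{(\infty)}$ with the tautological bundle $S(n\rho\otimes\tau_2)\times_{\Sigma_2}(\rho\otimes\tau_2)$ as defined before \eqref{eqn:TP(nrho)Relation}. I would check this by restricting $\kappa^*S(\c^\infty)$ to $S(\c^n)=S(n\rho\otimes\tau_2)$. With the $\Cmr_2$-action given by complex conjugation, $\c^n$ is $n\rho$ as an orthogonal representation, and $\Sigma_2$ acts through $-1$. The fiber $\kappa^*\c$ is then $\rho\otimes\tau_2$, which matches the fiber of the tautological bundle.
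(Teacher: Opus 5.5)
Your proposal is correct and follows the same route as the paper. The paper treats the corollary as immediate from \Cref{lem:realificationAtiyahRealBundle}: it pulls back along $\p(n\rho)\to\p(\infty\rho)$ and invokes the ``evident compatibility'' of realification with pullback. Your argument spells out that naturality, and also checks that $\iota_n^*\xi_\rho^{(\infty)}$ agrees with the tautological bundle $S(n\rho\otimes\tau_2)\times_{\Sigma_2}(\rho\otimes\tau_2)$, which the paper asserts without comment.
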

 
By \eqref{eqn:TP(nrho)Relation}, $\Kmr{\r}$-orientability of $\xi_\rho^{(n)}$, and the equivariant Atiyah duality, we have that
\begin{align*}
    \Kmr{\r}^\star (\p(n \rho)_+) & \cong \Kmr{\r}_{-\star} (\Th(-\Tmr \p(n\rho))) \\
        & \cong \Kmr{\r}_{-\star} (\Th(\epsilon_1 - n\xi_{\rho}^{(n)})) \\
        & \cong \Kmr{\r}_{-\star-1} (\Th(-n \xi_{\rho}^{(n)})) \\
        & \cong \Kmr{\r}_{n \rho-\star-1} (\p(n \rho)_+).
\end{align*}
Taking $\star = 0$, the following result gives us the torsion information we need. 
\begin{lemma}
    There is an isomorphism of abelian groups 
    \[
        \Kmr\r^0(\p(n\rho)_+) \cong \zz/2^{n}\langle [\xi_\r^{(n)} - \epsilon_\r]\rangle,
    \]
    where $\epsilon_\r$ is the trivial Atiyah Real line bundle over $\p(n\rho)$. 
\end{lemma}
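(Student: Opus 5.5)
The plan is to compute the group directly, using the filtration \eqref{filtration:p(n rho)} truncated at $\p(n\rho)$, and then to identify the generator through the ring structure. Throughout I read the statement as concerning the reduced group, i.e.\ the kernel of restriction to a point, since the unit summand $\zz\langle\epsilon_\r\rangle$ splits off via the augmentation. Write $x \coloneqq [\xi_\r^{(n)}] - [\epsilon_\r]$. There are three steps: (i) bound the order of the reduced group above by $2^n$; (ii) show $x$ has order exactly $2^n$; (iii) conclude the group is cyclic generated by $x$.

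\emph{Step (i), the filtration.} The cofiber sequences $\p((m-1)\rho)_+\to\p(m\rho)_+\to \Th((m-1)\xi_\rho)$ of \cite[Proposition 7.14]{BWZZ25}, together with the $\Kmr\r$-orientability of $\xi_\rho$, give a finite spectral sequence. Its $\Emr_1$-page is $\bigoplus_{m=0}^{n-1}\Kmr\r^{\star-m\rho}(\p(\rho)_+)$, with $\p(\rho)\cong S^\sigma$. I would then use $\rho$-periodicity of $\Kmr\r$ (the unit $\overline{v_1}$ in degree $\rho$ after inverting it) together with $\Kmr\r^{\Cmr_2}_{k}\cong \mathrm{KO}_k$. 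In total degree $0$, each layer $m\geq1$ should contribute one $\zz$ (from $\sfb^{m\rho}$) and one $\zz/2$ (from $\sfb^{m\rho+\sigma}$, via $\Kmr\r_{\sigma}\cong \mathrm{KO}_1$). By analogy with \Cref{lem: d1-differential in HZSS} and the cohomological differential ${\rm d}_1(\sfb^{m\rho+\sigma}) = 2\sfb^{(m+1)\rho}$ in \eqref{sseq:test}, I expect the differentials entering degree $0$ to come from degree $-1$ and to cut each $\zz$ down to $\zz/2$. The resulting associated graded of the reduced group would then be $n$ copies of $\zz/2$, one in each filtration $1,\dots,n$, so the group has order at most $2^n$. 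Consistency checks are that restriction to underlying gives $\widetilde{\mathrm{KU}}^0(\r\p^{2n-1})\cong\zz/2^{n-1}$, and that comparison with the $\text{k}\r$-homology computation of \Cref{thm:kRAssociatedGraded}, via the Atiyah duality isomorphism just established, should give the same count.

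\emph{Step (ii), multiplicative identification.} Since $\xi_\r^{(n)}$ is pulled back from $\Bmr S^1$ along $\kappa$, which factors through $\Sigma_2$, its tensor square is trivial. Hence $(x+1)^2=1$, i.e.\ $x^2=-2x$, and by induction $x^{k}=(-2)^{k-1}x$. So the order of $x$ equals $2^{N}$, where $N$ is the largest $k$ with $x^{k}\neq 0$. To see $x^{n}\neq 0$, note that $x$ lies in filtration $\geq1$ and its image in the layer $m=1$ is the $\Kmr\r$-Euler class of $\xi_\rho$. Under the map $\Kmr\r\to$ (connective cover) $\to\Hmr\ull{\zz}$, this Euler class reduces to $\sfe_{\Hmr\ull{\zz}}$. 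Then $x^{k}$ is detected in filtration $k$ by $\sfe_{\Hmr\ull{\zz}}^{k}=\sfb^{k\rho}\neq0$, by the remark following \Cref{cor:HZCohomologyProductStructure}. This requires $k\rho\le$ the top cell, i.e.\ $k\le n$.

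\emph{Step (iii), conclusion.} The product $x^{n+1}$ has filtration exceeding the length of the filtration of $\p(n\rho)$, so it vanishes. Hence $x$ has order exactly $2^n$, and by the upper bound of step (i) it generates a cyclic group of order $2^n$.

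The main obstacle is step (i): pinning down the differentials and ruling out extra classes in the $\Kmr\r$-cohomology spectral sequence, in particular that each integral class $\sfb^{m\rho}$ is reduced exactly to $\zz/2$ and that no $\mathrm{KO}$-torsion in other degrees leaks into total degree $0$. I would first try to transport these differentials from the already-computed homology spectral sequence (\Cref{thm:kRAssociatedGraded}) through the duality isomorphism. Failing that, I would argue with the cohomological analogue of \Cref{lem: d1-differential in HZSS}, comparing with underlying $\mathrm{KU}$ and with geometric fixed points to detect the $a_\sigma$-torsion-free parts.
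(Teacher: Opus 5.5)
There is a genuine gap, and it sits where the exponent is decided: an off-by-one in the top filtration of $\p(n\rho)$, which cannot be repaired in the direction you want.

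The layers of $\p(n\rho)$ are $\Th(m\xi_\rho)$ for $0\le m\le n-1$. So its cells lie in degrees $m\rho$ and $m\rho+\sigma$ with $m\le n-1$, and it is $(2n-1)$-dimensional. Hence in Step (ii) the class $\sfb^{k\rho}$ survives on $\p(n\rho)$ only for $k\le n-1$. In Step (iii) it is already $x^{n}$, not $x^{n+1}$, that lies in an empty filtration. More directly, $\xi_\r^{(n)}$ is pulled back along $\p(n\rho)\to\c\p^{n-1}$, so $x^{n}=0$. Your relation $x^2=-2x$ then gives $2^{n-1}x=\pm x^{n}=0$, so $x$ cannot generate a $\zz/2^n$.

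The case $n=1$ makes this concrete. We have $\p(\rho)\cong S^\sigma$, and $\xi_\r^{(1)}$ is pulled back from $\c\p^0$, hence trivial. Also $\widetilde{\Kmr\r}^0(S^\sigma)=\pi_\sigma\Kmr\r=0$.

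That last computation shows where Step (i) goes wrong. We have $\pi_\sigma\Kmr\r\cong\pi_{\sigma-\rho}\Kmr\r=\mathrm{KO}_{-1}=0$; it is $\pi_{-\sigma}\Kmr\r$ (the class $a_\sigma$) that equals $\mathrm{KO}_1=\zz/2$. So the $\sigma$-cells contribute nothing in total degree $0$, and each layer contributes one $\zz$ from its $m\rho$-cell. The $\mathrm{d}_1$ out of the $\zz\cong\pi_{(m+1)\rho}\Kmr\r$ carried by the $(m\rho+\sigma)$-cell in degree $-1$ is multiplication by $2$. So the reduced group has order at most $2^{n-1}$. Your count ``one $\zz/2$ in each of filtrations $1,\dots,n$'' does not follow even from your own $\Emr_1$-page, whose layers are $m=0,\dots,n-1$.

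A smaller point: the connective cover maps to $\Kmr\r$, not the other way. The detection of $x^{k}$ by $\sfb^{k\rho}$ should therefore go through the multiplicative structure of the cellular spectral sequence, or be replaced by restriction to the underlying $\r\p^{2n-1}$.

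With these corrections your argument proves $\widetilde{\Kmr\r}^0(\p(n\rho))\cong\zz/2^{n-1}\langle[\xi_\r^{(n)}-\epsilon_\r]\rangle$. The bound from Step (i) is $2^{n-1}$. Also $x$ restricts to a generator of $\widetilde{\mathrm{KU}}^0(\r\p^{2n-1})\cong\zz/2^{n-1}$, the value you yourself recorded, so it has order exactly $2^{n-1}$. Independently, let $L$ be the tautological Real line bundle on $\c\p^{n-1}$, so that $\p(n\rho)=S(L^{\otimes 2})$. The Gysin sequence of this circle bundle, together with $\pi_\sigma\Kmr\r=0$, gives $\Kmr\r^0(\p(n\rho)_+)\cong\zz[x]/(x^{n},x^2+2x)\cong\zz\oplus\zz/2^{n-1}$.

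The paper argues differently. It uses Atiyah duality to pass to $\Kmr\r_{n\rho-1}(\p(n\rho)_+)$, counts the order from the homology computation of Section 3, and settles extensions by restricting to $\mathrm{KU}^0(\r\p^{2n-1}_+)$. It carries the same shift: it takes $\mathrm{KU}^0(\r\p^{2n-1}_+)\cong\zz/2^{n}$, whereas this group is $\zz\oplus\zz/2^{n-1}$. Likewise, its corollary on $\Hmr\ull{\zz}_\star(\p(n\rho)_+)$ lets $m$ run up to $n$.

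So the exponent in the statement should be $n-1$. The downstream use, that $2^{n}[\xi_\rho^{(n)}-\epsilon_\rho]=0$, survives. Your multiplicative route, via $x^2=-2x$ and the vanishing of $x^{n}$, is a cleaner way to settle the extension problem than the paper's comparison with $\mathrm{KU}$.
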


\begin{proof}
    The above argument supplies us with an isomorphism
    \begin{equation}
    \label{thom iso in proof}
    \Kmr{\r}^0(\p(n\rho)_+) \cong \Kmr{\r}_{n\rho-1}(\p(n\rho)_+).
    \end{equation}
    As \eqref{filtration:p(n rho)} shows that $\Bmr_{\Cmr_2}\Sigma_{2+}$ is filtered by the spaces $\p(n\rho)$, one can determine that the order of the right hand side of \eqref{thom iso in proof} is $2^n$ from our main computation in \Cref{section 3}.
    
    To determine the 2 extensions, recall from the classical computation of $\text{KU}^*(\mathbb{RP}^{2n-1}_+)$ that there is an isomorphism
    \[
    \text{KU}^0(\mathbb{RP}^{2n-1}_+) \cong \mathbb{Z}/2^n.
    \]
    The restriction map
    \[
    \bigoplus_{i=0}^n\mathbb{Z}/2 \cong\Kmr{\r}^0(\p(n\rho)_+) \to \text{KU}^0(\mathbb{RP}^{2n-1}_+) \cong \zz/2^n,
    \]
    maps the $\mathbb{Z}/2$-summand on the $\mathrm{E}_\infty$-page of the $\textbf{ASliceSS}_{\mathrm{k}{\r}}(\Bmr_{\Cmr_2}\Sigma_2)$ isomorphically onto the $\mathbb{Z}/2$-summand on the $\mathrm{E}_\infty$-page of the periodic variant of \eqref{eqn:kuSliceSS}. This implies that the generator of $\text{KU}^0(\mathbb{RP}^\infty_+)$ must pull back to an element in $\Kmr{\r}^0(\mathbb{P}(n\rho)_+)$ of torsion degree at least $2^n$, finishing the proof.
\end{proof}

As a result, $[-n\xi_\rho^{(n)}]$ is equal in $\mathrm{KO}_{\mathrm{C}_2}^0(\mathbb{P}(n\rho)_+)$ to $[(2^{n} - n)\xi_\rho^{(n)}]$ for each $n$. 

\begin{lemma}\label{lem:normalbundlerepofxi_rho}
    There is a bundle isomorphism $n\xi_\rho^{(n)}\oplus (2^n - n)\xi_\rho^{(n)}\cong \epsilon_{2^n\rho}$.
\end{lemma}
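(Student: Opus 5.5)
We need to show $2^n\xi_\rho^{(n)}\cong\epsilon_{2^n\rho}$ as $\Cmr_2$-vector bundles. The plan is to first establish this stably in $\text{KO}_{\Cmr_2}$, then destabilize using \Cref{thm:C2Cancellation}.

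\textbf{Stable step.} By the preceding lemma, $\Kmr\r^0(\p(n\rho)_+)$ contains the class $[\xi_\r^{(n)}-\epsilon_\r]$ of order $2^n$; in particular $2^n[\xi_\r^{(n)}]=2^n[\epsilon_\r]$. We apply the realification ring map $r$ from \eqref{eqn:realificationKR}. By the corollary to \Cref{lem:realificationAtiyahRealBundle}, $r([\xi_\r^{(n)}])=[\xi_\rho^{(n)}]$. Also $r([\epsilon_\r])=[\epsilon_\rho]$, since the trivial Real line bundle $X\times\c$ with conjugation has underlying $\Cmr_2$-representation $1+\sigma=\rho$. Hence
\[
2^n[\xi_\rho^{(n)}]=[\epsilon_{2^n\rho}]\in\text{KO}^0_{\Cmr_2}(\p(n\rho)).
\]
This means that for some $\Cmr_2$-representation $W$ there is an isomorphism $2^n\xi_\rho^{(n)}\oplus\epsilon_W\cong\epsilon_{2^n\rho}\oplus\epsilon_W$. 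Since every representation embeds in a multiple of $\rho$, we may take $W=k\rho$. Rewriting $2^n\xi_\rho^{(n)}=n\xi_\rho^{(n)}\oplus(2^n-n)\xi_\rho^{(n)}$ gives the statement stably.

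\textbf{Destabilization.} We apply \Cref{thm:C2Cancellation} with $\xi=2^n\xi_\rho^{(n)}$, $\eta=\epsilon_{2^n\rho}$ and $V=k\rho$, peeling off one $1$ or $\sigma$ summand at a time. We must check the rank hypotheses. The total rank is $2^{n+1}$, while $\dim\p(n\rho)=2n-1$, and $2^{n+1}>2n$ for all $n\ge1$. For the fixed-point conditions, $\p(n\rho)^{\Cmr_2}$ consists of lines in $n\rho$ lying in $n$ or in $n\sigma$, so it is two copies of $\r\p^{n-1}$ and has dimension $n-1$. Over each fixed line $L$, the fiber of $\xi_\rho^{(n)}$ is $\rho\otimes\tau_2$ twisted by $L$, which as a $\Cmr_2$-representation is still $\rho$. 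So the restriction to fixed points has $\xi^\pm$ each of rank $2^n$ for $2^n\xi_\rho^{(n)}$, and $2^n>n$ as required. One point needs care: the stabilized bundles $\xi\oplus\epsilon_{j}$ arising during the peeling have even larger ranks, so the hypotheses continue to hold at every stage.

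\textbf{Main obstacle.} The hardest part is the bookkeeping in the fixed-point conditions. This means correctly identifying the eigenbundle decomposition of $\xi_\rho^{(n)}$ over both components of $\p(n\rho)^{\Cmr_2}$, and confirming that the components have dimension $n-1$ rather than something larger. If the strict inequalities fail for small $n$ (for example $n=1$, where $\p(\rho)=S^\sigma$), we check those cases by hand: $2\xi_\rho^{(1)}$ over $S^\sigma$ is trivial by direct inspection of the clutching. A secondary point is making sure that the equality in $\text{KO}_{\Cmr_2}$ really means stable isomorphism after adding a trivial representation bundle. This holds because $\p(n\rho)$ is a compact $\Cmr_2$-manifold, so equivariant bundles admit complements in trivial bundles.
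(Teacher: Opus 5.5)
Your proposal is correct and follows the paper's proof. You obtain the stable relation $2^n[\xi_\rho^{(n)}]=[\epsilon_{2^n\rho}]$ by pushing the order-$2^n$ computation in $\Kmr\r^0(\p(n\rho)_+)$ forward along realification, then destabilize with \Cref{thm:C2Cancellation} using the same rank checks ($2^{n+1}>2n$, and $2^n>n$ over the $(n-1)$-dimensional fixed locus); your hedge about small $n$ is unnecessary, since both strict inequalities already hold at $n=1$.
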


\begin{proof}
    We have established 
    \[
        n\xi_\rho^{(n)}\oplus (2^n - n)\xi_\rho^{(n)}\oplus \epsilon_V\cong \epsilon_{2^n\rho}\oplus \epsilon_V
    \]
    for some $\Cmr_2$-representation $V$. As 
    \[
        \text{rank}(2^n\xi_\rho^{(n)}) = 2^{n+1} > (2n - 1) + 1 = \dim(\p(n\rho))+1,
    \]
    and
    \[
        \text{rank}((2^n\xi_\rho^{(n)})^\pm) = 2^n > (n-1) + 1 = \dim(\p(n\rho)^\Cmr_2) + 1,
    \]
    the result follows from \Cref{thm:C2Cancellation}. 
\end{proof}

\begin{corollary}\label{cor:P(nrho)Immersions}
    For every $n\geq 0$, there is a $\Cmr_2$-equivariant immersion $\p(n\rho)\looparrowright 2^n\rho$. 
\end{corollary}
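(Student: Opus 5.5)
The plan is to feed \Cref{lem:normalbundlerepofxi_rho} into \Cref{lem:existenceOfGImmersionIfBundleComplementExists}, using \eqref{eqn:TP(nrho)Relation} to pass from $n\xi_\rho^{(n)}$ to the tangent bundle. First, combining \eqref{eqn:TP(nrho)Relation} with \Cref{lem:normalbundlerepofxi_rho} gives a $\Cmr_2$-bundle isomorphism
\[
    \Tmr\p(n\rho)\oplus \epsilon_1\oplus (2^n-n)\xi_\rho^{(n)} \cong \epsilon_{2^n\rho}.
\]
The candidate complement is therefore $\xi \coloneqq (2^n-n)\xi_\rho^{(n)}\oplus \epsilon_1$, an honest (non-virtual) $\Cmr_2$-vector bundle, since $2^n\geq n$. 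It satisfies $\Tmr\p(n\rho)\oplus\xi\cong\epsilon_{2^n\rho}$, and its rank is $2^{n+1}-(2n-1)$ with $\pm$-eigenbundle ranks determined by the fixed-point data of $\xi_\rho^{(n)}$.

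Next I check the dimension hypothesis of \Cref{thm:BierstoneImmersion}. Underlying, $\dim\p(n\rho)=2n-1\le 2^{n+1}=\dim(2^n\rho)$. On fixed points, $\p(n\rho)^{\Cmr_2}$ is a union of two copies of $\r\p^{n-1}$ (the lines in $n$ and in $n\sigma$), so it has dimension $n-1\le 2^n=\dim(2^n\rho)^{\Cmr_2}$. Both inequalities hold for all $n\ge 1$.

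Then, choosing any smooth equivariant map $\varphi:\p(n\rho)\to 2^n\rho$ (for instance the constant map at $0$), the diagram preceding \Cref{lem:existenceOfGImmersionIfBundleComplementExists} gives a $\Cmr_2$-bundle monomorphism $\Tmr\p(n\rho)\to \Tmr(2^n\rho)$. Since $\Cmr_2$ is finite, \Cref{lem:finiteGroupGeometricalMonoIsAllMono} makes it geometric, and \Cref{lem:existenceOfGImmersionIfBundleComplementExists} yields the immersion. The case $n=0$ is vacuous, since $\p(0)=\emptyset$.

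The main obstacle I expect is making sure the isomorphism in \Cref{lem:normalbundlerepofxi_rho} is genuinely $\Cmr_2$-equivariant on both fixed components. The issue is that \Cref{thm:C2Cancellation} was applied with $\pm$-rank hypotheses, and \eqref{eqn:TP(nrho)Relation} must hold equivariantly over both components of the fixed locus. I would verify this by restricting to each copy of $\r\p^{n-1}$. There $\xi_\rho^{(n)}$ splits as $\gamma_1\oplus\gamma_1\sigma$ (up to twisting), so its $\pm$ ranks are each $1$. The $\pm$-rank counts in \Cref{lem:normalbundlerepofxi_rho} are then consistent, and the stated isomorphism is equivariant.
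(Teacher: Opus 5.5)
Your proposal is correct and follows the paper's argument: combine \eqref{eqn:TP(nrho)Relation} with \Cref{lem:normalbundlerepofxi_rho}, then feed the resulting complement into \Cref{lem:existenceOfGImmersionIfBundleComplementExists}, with the dimension checks for \Cref{thm:BierstoneImmersion} ($2n-1\le 2^{n+1}$ and $n-1\le 2^n$) that the paper leaves implicit. In fact your complement $(2^n-n)\xi_\rho^{(n)}\oplus\epsilon_1$ is the right one: the paper's stated choice $\xi=(2^n-n)\xi_\rho^{(n)}$ has rank one too small to satisfy $\Tmr\p(n\rho)\oplus\xi\cong\epsilon_{2^n\rho}$, because of the trivial line in $\Tmr\p(n\rho)\oplus\epsilon_1\cong n\xi_\rho^{(n)}$.
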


\begin{proof}
    This is immediate from \Cref{lem:existenceOfGImmersionIfBundleComplementExists} with $\xi = (2^n - n)\xi_\rho^{(n)}$.
\end{proof}

\subsection{Immersions via Atiyah Real K-Theory}\label{sec:EfficiencyOfKR} \

The underlying map in \Cref{cor:P(nrho)Immersions} is an immersion $\r\p^{2n-1}\looparrowright \r^{2^{n+1}}$. This is the best possible immersion that one can obtain from the perspective of $\text{KU}$-theory. Explicitly, recall that
\[\widetilde{\text{KU}}^0(\r\p^{2n-1})\cong \zz/2^n\]
is generated by $[\gamma_1^{(2n-1)}\otimes \c - \epsilon_1\otimes \c]$, where $\gamma_1^{(2n-1)}$ is the tautological line bundle over $\r\p^{2n-1}$. The realification map
\[\text{KU}^0(\r\p^{2n-1})\to \text{KO}^0(\r\p^{2n-1})\]
sends the generator in the domain to $2[\gamma_1^{2n-1} - \epsilon_1]$. Without any explicit knowledge of the codomain, we can only conclude that this class is $2^{n}$-torsion. Consequently, \[[-(2n-1)\gamma_1^{(2n-1)}] = [(2^{n+1} - (2n-1))\gamma_1^{(2n-1)}].\] By \Cref{thm:ClassicalCancellation}, this leads to an unstable isomorphism 
\[(2n-1)\gamma_1^{(2n-1)}\oplus (2^{n+1} - (2n-1))\gamma_1^{(2n-1)}\cong 2^{n+1}\epsilon_1,\]
which implies there is an immersion $\r\p^{2n-1} \looparrowright\r^{2^{n+1}}$.

Therefore, one can interpret \Cref{cor:P(nrho)Immersions} as asserting that, from the perspective of constructing equivariant immersions of real projective spaces, Atiyah Real K-theory is as effective as classical $\mathrm{KU}$-theory.

We can reduce the number of copies of the regular representation needed to immerse $\p(n\rho)$ by applying the classical result. The strategy is to consider the restriction-coinduction adjunction
\[
    \begin{tikzcd}
        \Top_*^{\Cmr_2} \ar[rrr, shift right, "\res^{\Cmr_2}_\sfe(-)"'] & & & \Top_* \ar[lll, shift right, "\Map({\Cmr}_2{,} -)"'].
    \end{tikzcd}
\]
Let $\eta:\p(n\rho)\to \Map(\Cmr_2, \r\p^{2n-1})$ be the unit of this adjunction for $\mathbb{P}(n\rho)$. The bundle $\xi_\rho^{(n)}$ can be expressed as a pullback
\begin{equation}\label{eqn:pullbackconiducedBundle}
    \begin{tikzcd}
        \xi_\rho^{(n)} \ar[r, ""] \ar[d, ""] & \Map(\Cmr_2, \gamma_1)\ar[d, ""]\\
        \p(n\rho) \ar[r, ""] & \Map(\Cmr_2, \r\p^{2n-1})
        \arrow["\lrcorner"{anchor=center, pos=0.125}, draw=none, from=1-1, to=2-2]
    \end{tikzcd}
\end{equation}
Coinducing a non-equivariant bundle produces a $\Cmr_2$-equivariant bundle. Since coinduction is an additive functor, we obtain a map of abelian groups
\[
    \text{KO}^0(\r\p^{2n-1}) \to \text{KO}_{\Cmr_2}^0(\Map(\Cmr_2, \r\p^{2n-1})).
\]
These two observations imply that the degree of torsion of the class of $[\xi_\rho^{(n)} - \epsilon_\rho]$ is only slightly smaller than $2^n$. 

\begin{lemma}\label{lem:torsionInKOC2}
    The class $[\xi_\rho^{(n)} - \epsilon_\rho]$ is $2^{\phi(2n-1)}$-torsion, where $\phi$ is as defined in \Cref{thm:KO(RP^n)}. Furthermore, this class is at least $2^{\phi(2n-1)-1}$-torsion.
\end{lemma}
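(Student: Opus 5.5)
The upper bound comes from the pullback square \eqref{eqn:pullbackconiducedBundle}; the lower bound from restricting to the underlying space.

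\emph{Upper bound.} By \Cref{thm:KO(RP^n)} applied to $\r\p^{2n-1}$, we have $2^{\phi(2n-1)}[\gamma_1-\epsilon_1]=0$ in $\widetilde{\text{KO}}^0(\r\p^{2n-1})$. Concretely, there is an honest isomorphism
\[
2^{\phi(2n-1)}\gamma_1\oplus \epsilon_N\cong 2^{\phi(2n-1)}\epsilon_1\oplus\epsilon_N
\]
of vector bundles over $\r\p^{2n-1}$ for some $N$. Coinduction $\Map(\Cmr_2,-)$ is additive on bundles, so applying it gives an isomorphism of $\Cmr_2$-bundles over $\Map(\Cmr_2,\r\p^{2n-1})$. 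Here $\Map(\Cmr_2,\epsilon_1)=\epsilon_\rho$, since the coinduction of the trivial line is the regular representation. So
\[
2^{\phi(2n-1)}\Map(\Cmr_2,\gamma_1)\oplus \epsilon_{N\rho}\cong 2^{\phi(2n-1)}\epsilon_\rho\oplus \epsilon_{N\rho}.
\]
We then pull this back along $\eta:\p(n\rho)\to\Map(\Cmr_2,\r\p^{2n-1})$. By \eqref{eqn:pullbackconiducedBundle} the pullback of $\Map(\Cmr_2,\gamma_1)$ is $\xi_\rho^{(n)}$, which gives $2^{\phi(2n-1)}[\xi_\rho^{(n)}-\epsilon_\rho]=0$ in $\text{KO}^0_{\Cmr_2}(\p(n\rho))$.

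The step needing care is checking that the square \eqref{eqn:pullbackconiducedBundle} really is a pullback. The fibre of $\Map(\Cmr_2,\gamma_1)$ over $\eta(x)=(x,gx)$ is $\gamma_{1,x}\oplus\gamma_{1,gx}$, with $\Cmr_2$ swapping the two factors. We must identify this with the fibre $\rho\otimes\tau_2$ of $\xi_\rho^{(n)}$ over $x$. I would do this on the level of the sphere $S(n\rho\otimes\tau_2)$: send $(v,w)$ to the pair of lines through $v$ and through $gv$, using that $\rho\otimes\tau_2\cong \r\oplus\r$ with the swap.

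\emph{Lower bound.} I would restrict along $\res^{\Cmr_2}_\sfe:\text{KO}^0_{\Cmr_2}(\p(n\rho))\to \text{KO}^0(\r\p^{2n-1})$. The underlying bundle of $\xi_\rho^{(n)}$ is $\gamma_1\oplus\gamma_1$, because $\rho$ restricts to $2$ and $\tau_2$ twists each line tautologically. Hence the class restricts to $2[\gamma_1-\epsilon_1]$. By \Cref{thm:KO(RP^n)} this element has order exactly $2^{\phi(2n-1)-1}$, since $\widetilde{\text{KO}}^0(\r\p^{2n-1})$ is cyclic of order $2^{\phi(2n-1)}$ generated by $[\gamma_1-\epsilon_1]$. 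Restriction is a group homomorphism, so the order of $[\xi_\rho^{(n)}-\epsilon_\rho]$ is at least $2^{\phi(2n-1)-1}$.

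The main obstacle is the identification of underlying and coinduced bundles, that is, verifying the pullback square and the restriction $\res(\xi_\rho^{(n)})\cong 2\gamma_1$. Once those are in place, the rest is formal.
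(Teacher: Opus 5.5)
Your proposal is correct and follows the paper's argument. The upper bound comes from coinducing the classical relation $2^{\phi(2n-1)}[\gamma_1-\epsilon_1]=0$ of \Cref{thm:KO(RP^n)} and pulling back along $\eta$ via \eqref{eqn:pullbackconiducedBundle}; the lower bound comes from restricting $[\xi_\rho^{(n)}-\epsilon_\rho]\mapsto 2[\gamma_1-\epsilon_1]$ to $\widetilde{\text{KO}}^0(\r\p^{2n-1})$. You also indicate why the square is a pullback, which the paper only asserts: writing elements of $\rho$ as $a e+b g$, the map $[v, a e+b g]\mapsto (av,\, b\,gv)$ is a well-defined $\Cmr_2$-equivariant fibrewise isomorphism from $\xi_\rho^{(n)}$ onto $\eta^*\Map(\Cmr_2,\gamma_1)$.
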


\begin{proof}
    The first claim is a consequence from the previous discussion. For the latter claim, the restriction map $\widetilde{\text{KO}}_{\Cmr_2}^0(\p(n\rho))\to \widetilde{\text{KO}}^0(\r\p^{2n-1})$ sends $[\xi_\rho^{(n)} - \epsilon_\rho]$ to $2[\gamma_1^{(n)} - \epsilon_1]$, so the claim follows from the classical result. 
\end{proof}

Verifying the hypotheses of \Cref{thm:C2Cancellation}, we obtain a more efficient collection of equivariant immersions.

\begin{thm}\label{thm:p(nrho) immersion existence}
    For each $n\geq 1$, there is an $\Cmr_2$-equivariant immersion $\p(n\rho)\looparrowright 2^{\phi(2n-1)}\rho$. 
\end{thm}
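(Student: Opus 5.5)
The plan mirrors the proof of \Cref{cor:P(nrho)Immersions}, with $2^n$ replaced by $N \coloneqq 2^{\phi(2n-1)}$.

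First, combine \eqref{eqn:TP(nrho)Relation} with \Cref{lem:torsionInKOC2}. In $\widetilde{\text{KO}}^0_{\Cmr_2}(\p(n\rho))$ we have $N[\xi_\rho^{(n)}-\epsilon_\rho]=0$, hence $[N\xi_\rho^{(n)}] = [\epsilon_{N\rho}]$. Consequently
\[
  [-\Tmr\p(n\rho)] = [\epsilon_1] - [n\xi_\rho^{(n)}] = [\epsilon_1] + [(N-n)\xi_\rho^{(n)}] - [\epsilon_{N\rho}].
\]
This requires $N \geq n$. I will check this directly from the definition of $\phi$: one has $\phi(2n-1)\geq n/2$ roughly, and $2^{\phi(2n-1)} \geq n$ holds for all $n \geq 1$. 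The small cases $n=1,2,3$ give $\phi(1)=1,\ \phi(3)=2,\ \phi(5)=3$, and the growth is exponential thereafter.

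The stable relation then unpacks to an honest isomorphism
\[
  n\xi_\rho^{(n)} \oplus (N-n)\xi_\rho^{(n)} \oplus \epsilon_V \cong \epsilon_{N\rho} \oplus \epsilon_V
\]
for some $\Cmr_2$-representation $V$. This uses the usual fact that equality in equivariant $\text{KO}$ of a compact $\Cmr_2$-manifold means stable isomorphism after adding a trivial bundle $\epsilon_V$; every $\Cmr_2$-bundle embeds in a trivial $\epsilon_W$, and $W$ can be taken inside a multiple of $\rho$.

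Next apply \Cref{thm:C2Cancellation} to $\xi = N\xi_\rho^{(n)}$ and $\eta = \epsilon_{N\rho}$. The checks are as follows. First, $\operatorname{rank}(N\xi_\rho^{(n)}) = 2N$. Second, the $\pm$ eigenbundles over $\p(n\rho)^{\Cmr_2}$ each have rank $N$. Third, $\dim\p(n\rho)=2n-1$. Fourth, the fixed points $\p(n\rho)^{\Cmr_2}$ consist of copies of $\r\p^{n-1}$, of dimension $n-1$. Cancellation therefore needs $2N > 2n$ and $N > n$, i.e.\ $2^{\phi(2n-1)} > n$.

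This inequality is the step I expect to be the main obstacle. It can fail in small cases, where $N$ is only about $n$: for instance $n=2$ gives $N=4>2$, which is fine, and $n=1$ gives $N=2>1$, also fine. I will verify it in general by induction, since $\phi(2n-1)$ increases by $1$ at least every $4$ steps of $n$ while $n$ doubles only over many steps. If a boundary case fails, I would instead use the existence half of \Cref{prop:C2DecompBundle}, whose weaker rank hypotheses only require $>\dim$ rather than $>\dim+1$, to split off $\epsilon_V$ one trivial summand at a time.

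Cancellation yields $\Tmr\p(n\rho)\oplus\epsilon_1\oplus(N-n)\xi_\rho^{(n)} \cong \epsilon_{N\rho}$. To remove the extra $\epsilon_1$, rather than cancelling it, I will use $\xi = \epsilon_1\oplus (N-n)\xi_\rho^{(n)}$ directly as the complementary bundle, so that $\Tmr M\oplus\xi\cong\epsilon_{N\rho}$. The dimension hypothesis of \Cref{thm:BierstoneImmersion} holds: $\dim M = 2n-1 \le 2N$, and the fixed-point dimension satisfies $n-1\le N$.

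Finally, \Cref{lem:existenceOfGImmersionIfBundleComplementExists} gives the $\Cmr_2$-equivariant immersion $\p(n\rho)\looparrowright N\rho = 2^{\phi(2n-1)}\rho$.
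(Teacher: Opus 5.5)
Your proposal is correct and follows the paper's own route: the torsion bound of \Cref{lem:torsionInKOC2}, then $\Cmr_2$-cancellation exactly as in \Cref{lem:normalbundlerepofxi_rho} with $2^n$ replaced by $N=2^{\phi(2n-1)}$, then \Cref{lem:existenceOfGImmersionIfBundleComplementExists}. Taking $\epsilon_1\oplus(N-n)\xi_\rho^{(n)}$ as the complement is in fact more careful than the paper's handling of the extra $\epsilon_1$. The one thing to tighten is the numerical check, since $N$ is of size about $2^n$, not $n$: writing $2n-1=8k+r$ gives $\phi(2n-1)=n$ unless $4\mid n$, in which case $\phi(2n-1)=n-1$, so $N\ge 2^{n-1}$ and $N>n$ for all $n\ge 1$ (your ``$+1$ every four steps'' estimate is too weak, e.g.\ it only yields $N\ge 16$ at $n=16$).
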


\begin{proof}
    This follows from \Cref{lem:torsionInKOC2}, \Cref{lem:existenceOfGImmersionIfBundleComplementExists} and \Cref{thm:BierstoneImmersion}.
\end{proof}

\subsection{Equivariant James Periodicity} \

As a final application, we produce a $\Cmr_2$-equivariant analogue of James periodicity for the following stunted projective spaces.

\begin{definition}\label{defn:stuntedProj}
    Let $n > 1$ and $k\in \zz$. The \textbf{$\Cmr_2$-equivariant stunted projective spectrum} $\p_{k \rho}^{(k+n) \rho}$ is the Thom spectrum
    \[
        \p_{k \rho}^{(k+n) \rho} \coloneqq \Th(k \xi_\rho^{(n)}).
    \]
\end{definition}

\begin{thm}[Equivariant James Periodicity]\label{thm:C2JamesPeriodicity}
    For each $n>1$ and $k\in \z$, there is a stable equivalence
    \[
        \Sigma^{2^{\phi(2n-1)}\rho}\, \p_{k\rho}^{(k+n)\rho} \simeq \p_{(k+2^{\phi(2n-1)})\rho}^{(k + 2^{\phi(2n-1)} + n)\rho} 
    \]
\end{thm}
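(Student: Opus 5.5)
The plan is to imitate the classical proof of James periodicity: Thom spectra depend only on the stable class of the bundle. Set $N = 2^{\phi(2n-1)}$. By \Cref{defn:stuntedProj}, $\p_{(k+N)\rho}^{(k+N+n)\rho} = \Th((k+N)\xi_\rho^{(n)})$. Also $\Sigma^{N\rho}\p_{k\rho}^{(k+n)\rho} = \Th(k\xi_\rho^{(n)}\oplus \epsilon_{N\rho})$, since suspending a Thom spectrum by a representation is the Thom spectrum of the sum with the trivial bundle on that representation. So it suffices to show that the virtual bundles $(k+N)\xi_\rho^{(n)}$ and $k\xi_\rho^{(n)} + N\epsilon_\rho$ have the same class in $\text{KO}^0_{\Cmr_2}(\p(n\rho))$. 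Equivalently, it suffices that $N[\xi_\rho^{(n)} - \epsilon_\rho] = 0$ in $\widetilde{\text{KO}}^0_{\Cmr_2}(\p(n\rho))$. This is exactly the first assertion of \Cref{lem:torsionInKOC2}.

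Next, I would pass from stable equality of virtual bundles to an equivalence of Thom spectra. Equality in $\text{KO}^0_{\Cmr_2}$ means there is some $\Cmr_2$-representation $W$ and an honest $\Cmr_2$-bundle isomorphism
\[
(k+N)\xi_\rho^{(n)} \oplus \epsilon_W \oplus \zeta \cong k\xi_\rho^{(n)}\oplus \epsilon_{N\rho}\oplus \epsilon_W\oplus\zeta
\]
for a suitable auxiliary bundle $\zeta$. For $k<0$ I would first add a large multiple of $\xi_\rho^{(n)}$ to both sides so that only genuine bundles appear; one can also take $\zeta$ to be a complement in a trivial bundle over the compact $\Cmr_2$-manifold $\p(n\rho)$. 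Taking Thom spectra of this isomorphism gives an equivalence. Then I desuspend by $W$ and by the Thom spectrum of the complement, using the Thom-spectrum functor $J\colon \text{KO}_{\Cmr_2}$-virtual bundles $\to$ $\Cmr_2$-spectra, $\Th(\alpha\oplus\beta)$ compatible with sums, and $\Th(\alpha+\epsilon_W)\simeq\Sigma^W\Th(\alpha)$. This yields the stated equivalence. Because we only want a stable equivalence, no cancellation theorem is needed here; \Cref{thm:C2Cancellation} is unnecessary.

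The main subtlety is a technical one. Equivariant stable equivalence of virtual bundles in $\text{KO}_{\Cmr_2}^0(X)$ for compact $X$ must be realized by an isomorphism after adding a trivial bundle $\epsilon_W$ for a possibly nontrivial representation $W$, not just $\epsilon_m$. This holds because every $\Cmr_2$-bundle over a compact $\Cmr_2$-space embeds in a trivial bundle $\epsilon_U$ for some representation $U$, so it has a complement. I expect this to be the step that requires the most care. I would cite the standard equivariant K-theory fact that $\text{KO}^0_{\Cmr_2}(X)$ is the Grothendieck group of $\Cmr_2$-bundles with this cofinality, and then observe that Thom spectra of $\epsilon_W$-stabilized bundles differ by the invertible suspension $\Sigma^W$.
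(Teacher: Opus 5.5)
Your proposal is correct and is essentially the paper's argument: both reduce the equivalence to the first claim of \Cref{lem:torsionInKOC2}, namely $2^{\phi(2n-1)}[\xi_\rho^{(n)}-\epsilon_\rho]=0$, together with the facts that the Thom spectrum of a (virtual) $\Cmr_2$-bundle depends only on its stable class and that adding $\epsilon_W$ suspends it by $W$. The paper phrases the last step through the quotient $\text{J}_{\Cmr_2}(X)$ of $\text{KO}_{\Cmr_2}(X)$ (via a Real $\text{J}$-homomorphism that is not actually needed for the torsion bound), whereas you work directly with $\text{KO}_{\Cmr_2}$-classes. One wording slip for $k<0$: what you desuspend by is the representation $U$ with $|k|\xi_\rho^{(n)}\oplus\zeta\cong\epsilon_U$, not the Thom spectrum of $\zeta$.
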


\begin{proof}
    The Thom spectrum $\Th(k \xi_\rho^{(n)})$ is the homotopy colimit of the diagram
    \[
        \begin{tikzcd}
            S(k\xi_\rho^{(n)}) \ar[r, ""] &  D(k\xi_\rho^{(n)}),
        \end{tikzcd}
    \]
    where $S(k\xi_\rho^{(n)})$ and $D(k\xi_\rho^{(n)})$ are the sphere and disk bundles associated to $k\xi_\rho^{(n)}$, respectively. In particular, this disk bundle is equivariantly equivalent to $\p(n\rho)$. Consequently, the Thom spectrum of $k\xi_\rho^{(n)}$ is determined by the associated spherical fibration $S(k\xi_\rho^{(n)})$.

    One can form a group $\text{J}_{\Cmr_2}(X)$ from $\text{KO}_{\Cmr_2}(X)$ by identifying classes whose associated sphere bundles are equivariantly equivalent. Let $\text{J}_{\Cmr_2}:\text{KO}_{\Cmr_2}(X)\to \text{J}_{\Cmr_2}(X)$ to be the associated quotient map. Composing with the realification function \eqref{eqn:realificationKR}, we obtain a Real $\text{J}$-homomorphism:
    \[
        \text{J}_\r:\text{K}\r^0(X)\to \text{J}_{\Cmr_2}(X).
    \]
    It follows from \Cref{lem:realificationAtiyahRealBundle} that $\text{J}_\r([\xi_\r^{(n)}]) = [S(\xi_\rho^{(n)})]\in \text{J}_{\Cmr_2}(\p(n\rho))$. Consequently, the class of this stable spherical fibration is annihilated by $2^{\phi(2n-1)}$ by \Cref{lem:torsionInKOC2}. 
    
    We obtain the result from the following sequence of stable equivalences:
    \begin{align*}
        \begin{split}
            \p_{(k+2^{\phi(2n-1)})\rho}^{(k + 2^{\phi(2n-1)} + n)\rho} &= \Th((k+2^{\phi(2n-1)})\xi_\rho^{(n)})\\
                &\simeq \Sigma^{2^{\phi(2n-1)}\rho} \Th(k\xi_\rho^{(n)})\\
                &=\Sigma^{2^{\phi(2n-1)}\rho}\, \p_{k\rho}^{(k+n)\rho}.
        \end{split}
    \end{align*}
\end{proof}

\newpage

\begingroup
\raggedright
\bibliography{master}
\bibliographystyle{alpha}
\endgroup

\end{document}